\documentclass[reqno,12pt]{amsart}

\usepackage[utf8]{inputenc}

%\usepackage[left,pagewise,displaymath,mathlines]{lineno}
%\linenumbers

\usepackage{enumerate}
\usepackage[margin=1in]{geometry}
\usepackage{ifpdf}
\usepackage{amsmath}
\usepackage{amsfonts}
\usepackage{amssymb}
\usepackage{amsthm}
\usepackage[ocgcolorlinks,hyperfootnotes=false,colorlinks=true,citecolor=blue,linkcolor=blue,urlcolor=blue]{hyperref}
\usepackage{setspace}
\usepackage{amsrefs}
\usepackage{nicefrac}
\usepackage{graphicx}
\usepackage{color}
\usepackage{mathtools}

%% Show keys for labels in the PDF
%\newcommand{\showkeyslabelformat}[1]{%
% {\parbox{0.7in}{\normalfont\fontsize{6}{7}\selectfont\ttfamily#1}}}
%\usepackage[notref,notcite]{showkeys}
%%\usepackage[margin=1.5in]{geometry}
%%\definecolor{refkey}{rgb}{0.2,0.2,1}
%%\definecolor{labelkey}{rgb}{0.2,0.2,1}

% useful
\newcommand{\ignore}[1]{}

% analysis/geometry stuff

\renewcommand{\Im}{\operatorname{Im}}

% reals

\newcommand{\sabs}[1]{\lvert {#1} \rvert}
\newcommand{\norm}[1]{\left\lVert {#1} \right\rVert}
\newcommand{\snorm}[1]{\lVert {#1} \rVert}

% sets (some)
\newcommand{\C}{{\mathbb{C}}}
\newcommand{\R}{{\mathbb{R}}}

% consistent

\newcommand{\sB}{{\mathcal{B}}}

% Topo stuff

\newcommand{\rank}{\operatorname{rank}}

%extra thingies

\newtheorem{thm}{Theorem}[section]

\newtheorem{prop}[thm]{Proposition}

\newtheorem{cor}[thm]{Corollary}

\newtheorem{lemma}[thm]{Lemma}

\theoremstyle{definition}

\newtheorem{example}[thm]{Example}

\theoremstyle{remark}

%FIXME

%Changes/Remarks in Color

\author{Ji\v{r}\'{\i} Lebl}
\thanks{The first author was in part supported by NSF grant DMS-1362337.}
\address{Department of Mathematics, Oklahoma State University,
Stillwater, OK 74078, USA}
\email{lebl@math.okstate.edu}

\author{Alan Noell}
\address{Department of Mathematics, Oklahoma State University,
Stillwater, OK 74078, USA}
\email{noell@math.okstate.edu}

\author{Sivaguru Ravisankar}
\address{Tata Institute of Fundamental Research, Centre for Applicable Mathematics, Bengaluru 560065, India}
\email{sivaguru@tifrbng.res.in}

%\date{\today}
\date{September 4, 2018}

% Better than \sloppy apparently, and allows sloppier linebreaks.
\setlength{\emergencystretch}{3em}

\ifpdf
\hypersetup{
  pdftitle={On the Levi-flat Plateau problem},
  pdfauthor={Jiri Lebl, Alan Noell, Sivaguru Ravisankar},
  pdfsubject={Several Complex Variables},
  pdfkeywords={Levi-flat Plateau problem, CR singular, holomorphic hull, CR function},
}
\fi

\title{On the Levi-flat Plateau problem}

\keywords{Levi-flat Plateau problem, CR singular, holomorphic hull, CR function}
\subjclass[2010]{32V40 (Primary),  32V25 32E05 (Secondary)}

\begin{document}

%\doublespace

\begin{abstract}
We solve the Levi-flat Plateau problem in the following case.  Let $M \subset {\mathbb C}^{n+1}$, $n \geq 2$, be a connected compact real-analytic codimension-two submanifold with only nondegenerate CR singularities.  Suppose $M$ is a diffeomorphic image via a real-analytic CR map of a real-analytic hypersurface in ${\mathbb C}^n \times {\mathbb R}$ with only nondegenerate CR singularities.  Then there exists a unique compact real-analytic Levi-flat hypersurface, nonsingular except possibly for self-intersections, with boundary $M$.  We also study boundary regularity of CR automorphisms of domains in ${\mathbb C}^n \times {\mathbb R}$.
\end{abstract}

\maketitle

%\enlargethispage{\baselineskip}

%%%%%%%%%%%%%%%%%%%%%%%%%%%%%%%%%%%%%%%%%%%%%%%%%%%%%%%%%%%%%%%%%%%%%%%%%%%%

%NOTE: On source $(z,s) \in \C^n \times \R$, or $(x+iy,s+it) = (z,w) \in \C^n \times \C$

%NOTE: On target $(\xi,\sigma) \in \C^n \times \R$, or $(\xi,\sigma+i \tau) = (\xi,\eta) \in \C^n \times \C$

\section{Introduction} \label{section:intro}

Let $M \subset \C^{n+1}$ be a real codimension-two compact manifold.  The
\emph{Levi-flat Plateau problem} asks:

\medskip

\emph{Does there exist a Levi-flat hypersurface $H$ with boundary $M$?}

\medskip

A Levi-flat hypersurface is a real codimension-one submanifold
that is locally foliated by complex hypersurfaces.
If $\rho =
0$ is a defining equation for a smooth Levi-flat hypersurface $H$, then $\rho$
satisfies a complex Monge--Amp{\`e}re type differential equation,
that is, a rank condition
on the complex bordered Hessian:
\begin{equation}
\rank
\left[
\begin{matrix}
\rho & \rho_z \\
\rho_{\bar{z}} & \rho_{z\bar{z}}
\end{matrix}
\right]
= 2
\ \ \ \text{ for all points on $H$. }
\end{equation}
The complex Hessian restricted to the complex tangent space
is called the Levi-form, and the equation says that the Levi-form of $H$
vanishes identically.  The problem is a minimal surface problem along the
lines of the Harvey--Lawson~\cite{HarveyLawson} solution to the 
complex Plateau problem,
which asks for a complex submanifold with a given boundary.  Since a Levi-flat
hypersurface is a one-dimensional family of complex hypersurfaces, the
Levi-flat Plateau problem is essentially the Harvey--Lawson problem with a
single real parameter.

In $\C^2$, that is, when $n=1$, the question is quite classical
and has been studied by many authors in both the local and the global setting:
Bishop \cite{Bishop:diffman},
Moser and Webster \cite{MW:normal}, Bedford \cite{bedford:boundaries}, 
Huang and Krantz \cite{huangkrantz}, or
Bedford and Gaveau \cite{BG:envhol}, among others.

In $\C^{n+1}$ when $n \geq 2$,
existence of $H$ requires local necessary conditions on $M$ at every CR
point:
$M$ must be ``flat'' in at least one direction.
The existence had been first tackled by
Dolbeault--Tomassini--Zaitsev~\cites{DTZ,DTZ2},
and the flattening (that is, local existence of $H$ near
the CR singularity) has been studied more recently by
Huang--Yin~\cite{HuangYin:flatI} and Fang--Huang~\cite{FangHuang}.
In the flat case, the authors have studied extension of CR
functions~\cites{crext1,crext2,crext3}.
In this present paper, we study the regularity
of an extended mapping.

In \cite{crext2} the authors proved that if $M$ is an image by a CR map of the
boundary of a bounded 
domain $\Omega \subset \C^n \times \R$, $n \geq 2$, where
$\Omega$ has connected real-analytic boundary with nondegenerate ($A$-nondegenerate,
see below) CR singularities,
then there exists a possibly singular Levi-flat hypersurface $H$ with
boundary $M$.  The question of regularity of $H$ was left open, however.
In this paper we prove the regularity of this solution for real-analytic
$M$.  For $n=1$, \cite{crext1} provides existence in the elliptic case under
a hypothesis replacing the CR condition.
We prove regularity in this case if the image is flat.

Let us be more specific.
(See \S\ref{section:prelim} for detailed
definitions.)
First, all submanifolds are assumed to be embedded submanifolds 
unless otherwise noted.
A real-analytic submanifold $M \subset \C^{n+1}$
of codimension two has a CR singularity at $p \in M$
if $M$ is not a complex submanifold near $p$
but is tangent to a complex hyperplane at $p$.
At a CR singularity $p$,
the submanifold can be written in local
coordinates $(z,w) \in \C^n \times \C$ vanishing at $p$ as
$w = \rho(z,\bar{z})$, where $\rho \in O(\snorm{z}^2)$.
We write this equation as
\begin{equation}
w = A(z,\bar{z}) + B(z,z) + \overline{C(z,z)} + E(z,\bar{z})
= Q(z,\bar{z}) + E(z,\bar{z}) ,
\end{equation}
where $A$ is a sesquilinear form, $B$ and $C$ are bilinear,
$E \in O(\snorm{z}^3)$, and $Q=A+B+\overline{C}$.
There are two natural nondegeneracy conditions on the CR singularity.
We could require $Q$ to be nondegenerate, or we could require that $A$ be nondegenerate. The second condition
is more appropriate
from the point of view of CR geometry since $A$ carries the
Levi-form information.
We call these two nondegeneracies \emph{$Q$-nondegenerate}
and \emph{$A$-nondegenerate}.

Note that for a codimension-two submanifold, isolated singularities
are the generic condition.
The forms $A$ and $C$ carry the local biholomorphic invariants, and in fact a
holomorphic change of coordinates can make $B=C$.
The CR singularity is said to be \emph{elliptic} if, after
a local change of coordinates, $A$ is Hermitian positive
definite, and the level sets of $A+C+\overline{C}$ are ellipsoids
($C$ has small eigenvalues relative to $A$).
An elliptic CR singularity is both $A$- and $Q$-nondegenerate.

We say $M$ is holomorphically flat if there exist coordinates
$(z,w) \in \C^n \times \C$ in which $\rho$ is real-valued.
We mostly deal with holomorphically flat manifolds, and
in fact much of the time we will restrict our attention to
$\C^n \times \R$, where we use coordinates $(z,s)$.
In this setting, if $\Omega \subset \C^{n} \times \R$
is a domain with smooth boundary, a smooth function
$\varphi \colon \overline{\Omega} \to \C$ is CR if $\varphi|_{\Omega}$ is
CR.  In this case $\frac{\partial \varphi}{\partial \bar{z}_k} = 0$ for all
$k$ at all points of $\Omega$, and hence at all points of $\overline{\Omega}$.

We are now ready to state our first result.

\begin{thm}\label{thm:main}
Let
$\Omega \subset \C^n \times \R$, $n \geq 2$, be a bounded domain
with connected real-analytic boundary
such that $\partial \Omega$ 
has only $A$-nondegenerate
CR singularities.
Let $\Sigma \subset \partial \Omega$ be the set of CR singularities of $\partial \Omega$.
Let $f\colon \partial \Omega \to \C^{n+1}$ be a real-analytic
embedding that is CR at CR points of $\partial \Omega$
and takes CR points of $\partial \Omega$ to CR points of $f(\partial \Omega)$.

Then, there exists a real-analytic CR map
$F\colon\overline{\Omega} \to\C^{n+1}$ such that
$F|_{\partial \Omega} = f$ and
$F|_{\overline{\Omega} \setminus \Sigma}$ is an immersion.
In other words, $F(\overline{\Omega})$ is the solution of the Levi-flat
Plateau problem for $f(\partial \Omega)$.
\end{thm}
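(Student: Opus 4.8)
The plan is to obtain $F$ by extending the components of $f$, one at a time, as CR functions on $\overline{\Omega}$; this is exactly the construction behind \cite{crext2} and produces a CR map $F\colon\overline{\Omega}\to\C^{n+1}$ with $F|_{\partial\Omega}=f$, continuous on $\overline{\Omega}$, real-analytic on $\Omega$, and with $z\mapsto F(z,s)$ holomorphic on each slice $\Omega_s:=\{z:(z,s)\in\Omega\}$ (since $F_{\bar z_k}=0$). As the CR singularities of $\partial\Omega$ are nondegenerate they are isolated, so $\Sigma$ is finite. It then remains to prove two statements. \textbf{(A)} $F$ extends real-analytically across $\partial\Omega$, \emph{including} at the points of $\Sigma$. \textbf{(B)} For every $q\in\overline{\Omega}\setminus\Sigma$ the vectors $F_{z_1},\dots,F_{z_n},F_s$ are $\C$-linearly independent at $q$; equivalently, $dF$ has maximal rank $2n+1$ there. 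Given (A), $F$ is real-analytic on $\overline{\Omega}$ and stays CR there by real-analytic continuation of $F_{\bar z_k}=0$ from $\Omega$; given (B), $F|_{\overline{\Omega}\setminus\Sigma}$ is an immersion, so the pieces $F(\overline{\Omega_s})$ are complex submanifolds with boundary foliating $F(\overline{\Omega})$ away from its self-intersections and from $F(\Sigma)$, which makes $F(\overline{\Omega})$ the Levi-flat hypersurface bounded by $f(\partial\Omega)$.

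For \textbf{(A)} at a CR point $p\in\partial\Omega\setminus\Sigma$: there $s|_{\partial\Omega}$ is a submersion, so near $p$ the hypersurface $\partial\Omega$ is a real-analytic family, parametrized by $s$, of real-analytic hypersurfaces $\Gamma_s\subset\C^n$ with $\Omega_s$ on one side, and $f|_{\Gamma_s}$ is a real-analytic CR function (at such points $T^{1,0}\Gamma_s=T^{1,0}\partial\Omega$, so the tangential CR equations for $f$ restrict to those on $\Gamma_s$). By the reflection principle for real-analytic CR functions on real-analytic hypersurfaces, $f|_{\Gamma_s}$ extends to a holomorphic function on a full neighborhood of $\Gamma_s$ in $\C^n$, and by boundary uniqueness this extension is $F|_{\Omega_s}$. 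Doing this with analytic dependence on $s$ --- equivalently, complexifying the variables $(z,\bar z,s)$ and solving the resulting holomorphic Cauchy problem --- yields a real-analytic extension of $F$ to a neighborhood of $\partial\Omega\setminus\Sigma$ in $\C^n\times\R$, on which $F_{\bar z_k}=0$ as well.

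The main obstacle is \textbf{(A)} at a CR singularity $p\in\Sigma$, where $\partial\Omega$ is tangent to the complex hyperplane $\C^n\times\{s(p)\}$ and the $A$-nondegeneracy forces this tangency to be of the simplest type. Using holomorphic flatness of $\partial\Omega\subset\C^n\times\R$ and the flattening and normal-form results of Huang--Yin \cite{HuangYin:flatI} and Fang--Huang \cite{FangHuang}, one puts $\partial\Omega$ and $\Omega$ into a standard local model near $p$; and since $f$ is CR near $p$ and takes CR points to CR points, one fixes a matching local Levi-flat extension of $f(\partial\Omega)$ near $f(p)$. One then shows that, near $p$, the map $F$ agrees with this model extension precomposed with a CR map between two germs of domains in $\C^n\times\R$, so that real-analyticity of $F$ up to $\Sigma$ reduces to boundary regularity of such CR maps --- precisely our boundary-regularity theorem for CR automorphisms of domains in $\C^n\times\R$, proved below. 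I expect essentially all of the difficulty, and the only genuinely delicate estimates, to be concentrated here, since one must control the family of slices $\Omega_s$ as it collapses onto the single point $p$ while $s\to s(p)$.

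For \textbf{(B)}: fix $q\in\overline{\Omega}\setminus\Sigma$ and set $s_0=s(q)$. As $z\mapsto F(z,s_0)$ is holomorphic, $dF_q$ has rank $2n+1$ exactly when
\[
J(z):=\det\bigl[\,F_{z_1}(z,s_0)\ \ \cdots\ \ F_{z_n}(z,s_0)\ \ F_s(z,s_0)\,\bigr]
\]
is nonzero at $q$, and by (A) this $J$ is holomorphic on a neighborhood of the compact set $\overline{\Omega_{s_0}}\subset\C^n$. At a boundary point $p'\in\partial\Omega_{s_0}\setminus\Sigma$ one has $J(p')\ne0$. Indeed, $F_{z_1},\dots,F_{z_n}$ are independent at $p'$: otherwise the kernel of $d\bigl(F(\cdot,s_0)\bigr)_{p'}$ would be a positive-dimensional complex subspace of $\C^n$ meeting the real hyperplane $T_{p'}\partial\Omega_{s_0}$ nontrivially, contradicting that $f$, restricted to the hypersurface $\partial\Omega_{s_0}$ near $p'$, is an immersion; so $P:=\operatorname{span}_{\C}\{F_{z_j}(p')\}$ is the complex $n$-dimensional tangent space of $F(\overline{\Omega_{s_0}})$ at $f(p')$. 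And $F_s(p')\notin P$: were it in $P$, then $dF_{p'}$ would send all of $T_{p'}\overline{\Omega}$ into $P$, whence $df_{p'}(T_{p'}\partial\Omega)=P$ by injectivity of $df_{p'}$ and a dimension count, so $f(\partial\Omega)$ would be tangent at $f(p')$ to the complex hyperplane $P$, making $f(p')$ a CR singularity of $f(\partial\Omega)$ --- impossible, since $f$ sends the CR point $p'$ to a CR point. Hence $\{J=0\}\cap\partial\Omega_{s_0}\subseteq\Sigma$ is finite, and $\{J=0\}\cap\Omega_{s_0}=\emptyset$ too: an irreducible component $W$ of $\{J=0\}$ is pure $(n-1)$-dimensional, hence (as $n\ge2$) positive-dimensional, and it remains connected after deleting the finitely many points where it meets $\partial\Omega_{s_0}$, so it lies either entirely inside $\Omega_{s_0}$ or entirely outside $\overline{\Omega_{s_0}}$; the first alternative would make $W$ a compact positive-dimensional analytic subset of $\C^n$, impossible by the maximum principle, so $W\cap\Omega_{s_0}=\emptyset$. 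Therefore $J(q)\ne0$, and since $q$ was arbitrary, (B) holds, completing the proof.
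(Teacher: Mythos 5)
There are two genuine gaps, both traceable to the same misreading of the nondegeneracy hypothesis and of what is assumed about the image.

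First, your step (A) at a CR singular point $p\in\Sigma$ is not a proof, and the route you sketch cannot work under the hypotheses of Theorem~\ref{thm:main}. Nothing is assumed there about the CR singularities of $f(\partial\Omega)$: the image singularity at $f(p)$ may be $A$- and $Q$-degenerate, and Example~\ref{example:cusp} shows that $f(\partial\Omega)$ need not lie in \emph{any} nonsingular real-analytic Levi-flat hypersurface near $f(p)$, so there is no ``matching local Levi-flat extension'' of the image to compare with, and the flattening theorems of Huang--Yin \cite{HuangYin:flatI} and Fang--Huang \cite{FangHuang} do not apply (they require an $A$-nondegenerate, non-exceptional singularity of the \emph{image}; that is the setting of Corollary~\ref{cor:main}, not of Theorem~\ref{thm:main}). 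The paper avoids this entirely: boundary regularity, including at $\Sigma$, is part of the extension theorem itself --- Corollary~1.3 of \cite{crext2} produces a holomorphic map $G$ on a full neighborhood $\Upsilon\supset\overline{\Omega}$ in $\C^{n+1}$ with $G|_{\partial\Omega}=f$, using only the $A$-nondegeneracy of the \emph{source} $\partial\Omega$ (Lemma~\ref{lemma:extension}). Your deferral of ``essentially all of the difficulty'' to an unproved reduction is therefore the main missing piece, and the reduction as described would fail.

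Second, in step (B) you assert that $A$-nondegenerate CR singularities are isolated, hence $\Sigma$ is finite. That is false: only $Q$-nondegeneracy forces isolated singularities; $A$-nondegeneracy gives only $\dim\Sigma\le n$ (Proposition~\ref{prop:sizeofCRsing}); for instance $s=\sum_j\bigl(\sabs{z_j}^2+\tfrac12(z_j^2+\bar z_j^2)\bigr)=2\sum_j(\operatorname{Re}z_j)^2$ is $A$-nondegenerate with an $n$-dimensional CR singular set. Your argument on an exceptional leaf removes ``the finitely many points'' of $\{J=0\}\cap\partial\Omega_{s_0}\subseteq\Sigma$ and invokes connectedness of $W$ minus a finite set; when $W\cap\partial\Omega_{s_0}$ can have real dimension up to $n$ (equal to $\dim_{\R}W=2n-2$ when $n=2$), this deletion argument collapses, and even for $n\ge3$ removing a real codimension-one closed set can disconnect $W$. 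The paper's Lemma~\ref{lemma:FimmersioninOmega} gets around this by working with the $n$-dimensional variety $B=\{\det DG=0\}\subset\C^{n+1}$ together with Proposition~\ref{prop:flatCRsingsonleafs}: if $B$ met $\Omega$, then either some slice $(\Omega\cap B)_{(s_0)}$ contains a whole component of $\Omega_{(s_0)}$, forcing $\dim\Sigma\ge 2n-1>n$, or nearby slices $(\Omega\cap B)_{(s)}$ with $s\notin\pi_{\R}(\Sigma)$ are nonempty positive-dimensional complex varieties that must reach $\partial\Omega$ at CR points, where $F$ is an immersion by Proposition~\ref{prop:MimmersionatCR} --- a contradiction either way. (Equivalently, one can repair your leafwise argument with a Hurwitz-type limit from nearby non-exceptional leaves rather than the finite-deletion step.) Your boundary computation that $J\ne0$ at CR boundary points is fine and is essentially Proposition~\ref{prop:MimmersionatCR}, but as written both (A) at $\Sigma$ and the interior part of (B) are gaps.
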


There are two issues with extending the theorem to $n=1$.  First, the
CR condition is vacuous, and the extension $F$ may not in fact exist at all.
Simply take $\Omega \subset \C \times \R$ to be the unit ball, and
let $f(z,s) =(\bar{z},s)$. Then no such $F$ exists.
Even if an extension exists, the geometry of the CR singular points does not
force the derivative to be of full rank as in the case $n \geq 2$. 
See Example~\ref{example:nonimmersionlocal}.

The assumption that CR points go to CR points is necessary.  If  a CR point goes to a CR singular point, Example~\ref{example:crsingulartwo} shows that the image hypersurface may
be singular.  Note that, when a  CR point goes to a CR singular point, the image point cannot be
$A$-nondegenerate. See  Proposition~\ref{prop:imagenotnondegenerate}.

%FIXME: Question: Is $F(\overline{\Omega})$ a $C^1$ manifold at the singular points $f(\Sigma)$?

%FIXME: And show that if the $z$ coordinate of the map doesn't depend on the $s$ then
%we get the $C^1$ result.  Note the example below.

Next we use a theorem of
Fang--Huang~\cite{FangHuang}: When $n \geq 2$,
a real-analytic CR singular submanifold of $\C^{n+1}$ with an 
$A$-nondegenerate CR singularity, except for
one exceptional case,
can be ``flattened'' locally
if it is flat at the CR points.
Using this flattening and
the previous extension result of the
authors~\cite{crext2} on the inverse,
if we further assume that $f(\partial \Omega)$ has only
$A$-nondegenerate CR singularities as above,
then $F$ becomes an immersion on all of $\overline{\Omega}$,
including the CR singularities. 
The extension theorem (Theorem~\ref{thm:fanghuanglnr})
combining results of \cite{crext2} and Fang--Huang~\cite{FangHuang} may
be of independent interest.

We say a CR singular manifold $M \subset \C^{3}$ is the \emph{exceptional case}
if $M$ can be put into the form
\begin{equation} \label{eq:exceptionalcase}
w = \sabs{z_1}^2 - \sabs{z_2}^2
+ \lambda (z_1^2 + \bar{z}_1^2)
+ \lambda (z_2^2 + \bar{z}_2^2) + O(\snorm{z}^3)
,
\qquad \text{where $\lambda \geq \frac{1}{2}$} .
\end{equation}
This case is exceptional because this $M$ has no elliptic direction, whereas
all the other quadratically flat (where $A$ is real-valued) have an
elliptic direction (see~\cite{crext2}).

\begin{cor}  \label{cor:main}
Let
$\Omega \subset \C^n \times \R$, $n \geq 2$,
be a bounded domain with connected
real-analytic boundary
such that $\partial \Omega$ has only $A$-nondegenerate CR singularities,
and let $f\colon \partial \Omega \to \C^{n+1}$ be a real-analytic
embedding that is CR at CR points of $\partial \Omega$.
Assume that $f(\partial \Omega)$ has only $A$-nondegenerate 
CR singularities. Further assume that either $n \geq 3$ or
 no CR singularity of $f(\partial \Omega)$ is the exceptional case
(in other words, every CR singularity has an elliptic direction).

Then, there exists a real-analytic CR map
$F\colon\overline{\Omega} \to\C^{n+1}$ such that
$F|_{\partial \Omega} = f$ and  $F$ is an immersion
on ${\overline{\Omega}}$.
\end{cor}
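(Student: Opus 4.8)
The plan is to begin from the map $F$ furnished by Theorem~\ref{thm:main} and upgrade it to an immersion at the CR singular points of $\partial\Omega$, which are the only points where Theorem~\ref{thm:main} does not already give immersivity. First I would check that Theorem~\ref{thm:main} applies at all, namely that $f$ sends CR points of $\partial\Omega$ to CR points of $M:=f(\partial\Omega)$: were some CR point of $\partial\Omega$ sent to a CR singularity of $M$, Proposition~\ref{prop:imagenotnondegenerate} would force that image point not to be $A$-nondegenerate, against hypothesis. Applying the same proposition to $f^{-1}$, together with the fact that $\partial\Omega$ has only $A$-nondegenerate CR singularities, shows moreover that $f$ maps $\Sigma$ bijectively onto the CR singular set of $M$. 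Thus Theorem~\ref{thm:main} supplies a real-analytic CR map $F\colon\overline\Omega\to\C^{n+1}$ with $F|_{\partial\Omega}=f$ and $F$ an immersion on $\overline\Omega\setminus\Sigma$, so it remains to prove that $dF_p$ is injective for every $p\in\Sigma$.

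Fix $p\in\Sigma$ and put $p':=f(p)$, a CR singularity of $M$; it is $A$-nondegenerate by hypothesis, and by the standing assumption (either $n\geq3$, or $n=2$ and no CR singularity of $M$ is of the form~\eqref{eq:exceptionalcase}) it is not the exceptional case. Moreover $M$ is flat at its CR points: since $F$ is an immersion off $\Sigma$, near each CR point $M$ bounds the real-analytic Levi-flat hypersurface $H:=F(\overline\Omega)$. Hence the Fang--Huang flattening theorem~\cite{FangHuang} yields a local biholomorphism $\Phi$ near $p'$ with $\widetilde{M}:=\Phi(M)\subset\C^n\times\R$; after shrinking, $\widetilde{M}$ is a real-analytic hypersurface in $\C^n\times\R$ with the single $A$-nondegenerate CR singularity $\widetilde{p}:=\Phi(p')$, bounding a domain $\widetilde{\Omega}\subset\C^n\times\R$ that I take to lie on the side containing $\Phi(H)$ near $\widetilde{p}$.

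Next, $\Phi\circ f$ is a real-analytic CR diffeomorphism from $\partial\Omega$ near $p$ onto $\widetilde{M}$ near $\widetilde{p}$, with both sides $A$-nondegenerate at the distinguished singularity. Applying the extension result of~\cite{crext2} to the inverse diffeomorphism $g:=(\Phi\circ f)^{-1}$ — this is precisely the combined statement recorded as Theorem~\ref{thm:fanghuanglnr} — produces a real-analytic CR map $G\colon\overline{\widetilde{\Omega}}\to\C^n\times\R$ with $G|_{\widetilde{M}}=g$ and $G(\overline{\widetilde{\Omega}})\subset\overline\Omega$ near $p$. I would then show that $G\circ\Phi\circ F$ is the identity on a neighborhood of $p$ in $\overline\Omega$: the map $\Phi\circ F$ sends $\Omega$ into $\C^n\times\R$ because, writing $\C^n\times\R=\{\Im w=0\}$, the $w$-component of $\Phi\circ F$ is a CR function which on each complex slice of $\Omega$ is holomorphic and real on the boundary, hence real there by the maximum principle, and it sends $\Omega$ into $\widetilde{\Omega}$ since $F(\Omega)\subset H$ and $\Phi(H)$ lies on the $\widetilde{\Omega}$-side near $\widetilde{p}$; and since $G\circ\Phi\circ F$ agrees with the identity on $\partial\Omega$ near $p$, the uniqueness of CR extensions from an $A$-nondegenerate CR singular boundary (a slice-wise maximum principle in the purely elliptic case, the machinery of~\cite{crext2} in general) forces $G\circ\Phi\circ F=\id$ there. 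Differentiating at $p$ gives $d(G\circ\Phi)_{p'}\circ dF_p=\id$, so $dF_p$ is injective; as $p\in\Sigma$ was arbitrary, $F$ is an immersion on all of $\overline\Omega$.

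I expect the main obstacle to be this last step: assembling~\cite{crext2} and Fang--Huang~\cite{FangHuang} into the combined extension theorem~\ref{thm:fanghuanglnr}, and deducing from it that the extension $\Phi\circ F$ of the boundary CR diffeomorphism $\Phi\circ f$ is itself a CR diffeomorphism of the closures near the CR singularity (equivalently, that $G\circ\Phi\circ F=\id$ near $p$). This is the place where $A$-nondegeneracy on both sides, the exclusion of the exceptional case~\eqref{eq:exceptionalcase} (which is exactly what makes the Fang--Huang flattening available when $n=2$), and the local filling and uniqueness picture for $\Omega$ near $\Sigma$ are all needed; by comparison, the reduction to this step is essentially bookkeeping on top of the cited results.
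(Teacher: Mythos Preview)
Your strategy matches the paper's: invoke Theorem~\ref{thm:main} for the extension and immersivity off $\Sigma$, then at each $p\in\Sigma$ produce a holomorphic local inverse of $f$ via Theorem~\ref{thm:fanghuanglnr} and differentiate the resulting identity. Two remarks.

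First, a small correction: to show that CR singularities of $\partial\Omega$ map to CR singularities of $M$, you cannot apply Proposition~\ref{prop:imagenotnondegenerate} to $f^{-1}$, because that proposition requires the source CR manifold to lie in $\C^n\times\R$, which is not known for $M\subset\C^{n+1}$. The paper uses Corollary~\ref{cor:imageofnondegeneratenotCR} instead (which goes through the holomorphic extension of $f$).

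Second, your final step is over-engineered and has a gap. You take $G$ only as a CR map on $\overline{\widetilde\Omega}$ and then need $\Phi\circ F$ to land in $\overline{\widetilde\Omega}$; your slice-wise maximum principle for this does not work as stated, because $\Phi$ is defined only on a neighborhood of $p'$, so $\Phi\circ F$ is defined only on a neighborhood of $p$ in $\overline\Omega$, which for a non-elliptic (or even elliptic) $A$-nondegenerate singularity need not contain any full leaf $\overline\Omega_{(s)}$. The paper sidesteps this: Theorem~\ref{thm:fanghuanglnr} already yields a \emph{holomorphic} extension $\Phi$ of $\varphi=f^{-1}$ to a neighborhood $U$ of $p'$ in $\C^{n+1}$ (the flattening is absorbed into the proof of that theorem and need not be kept explicit), while $F$ extends holomorphically near $p$ by Lemma~\ref{lemma:extension}. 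Then $F\circ\Phi$ is holomorphic on $U$ and equals the identity on $M\cap U$; since $M$ is generic at its (dense) CR points, the identity principle gives $F\circ\Phi=\operatorname{Id}$ on $U$, and differentiating at $p'$ shows $dF_p$ is invertible. If you use Theorem~\ref{thm:fanghuanglnr} at full strength, the detour through $\widetilde\Omega$, the side on which $\Phi(H)$ lies, and the slice-wise uniqueness argument all disappear.
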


As we mentioned above,
Proposition~\ref{prop:imagenotnondegenerate}
implies that
$f$ cannot take a CR point to an $A$-nondegenerate
CR singularity when $n\geq 2$,
so we do not need to assume that $f$ takes CR points to CR points.

Example~\ref{example:imagedegenerate} shows that the condition of
$A$-nondegeneracy of the image is necessary.  If the image $f(\partial \Omega)$
has an $A$-degenerate CR singularity, it is possible
for the derivative of $F$ at a CR singular point to be
rank-deficient.  Not only that, Example~\ref{example:cusp} shows that
the image $F(\overline{\Omega})$ need not be a real-analytic submanifold.

If, instead of applying a local flattening result 
such as the theorem of
Fang--Huang, we assume that the image of the boundary lies in
$\C^n \times \R$ (a global flattening),
then we strengthen the conclusions of the preceding results.
Furthermore,
we obtain an interesting corollary on the boundary behavior
of automorphisms of domains.

\begin{thm}\label{thm:flat}
Let
$\Omega \subset \C^n \times \R$, $n \geq 2$,
be a bounded domain with connected
real-analytic boundary
such that $\partial \Omega$ has only $A$-nondegenerate
CR singularities, and let
$f\colon \partial \Omega \to \C^n \times \R$ be a real-analytic
embedding that is CR at CR points of $\partial \Omega$.

Then, there exists a real-analytic CR map
$F\colon\overline{\Omega} \to\C^n \times \R$ such that
$F|_{\partial \Omega} = f$.

Furthermore:
\begin{enumerate}[(a)]
\item If $f$ takes CR points of $\partial \Omega$
to CR points of $f(\partial \Omega)$, then $F$ is
one-to-one on $\overline{\Omega}$, and
$F|_{\overline{\Omega} \setminus \Sigma}$ is an embedding. Here
 $\Sigma \subset \partial \Omega$ is the set of CR singularities of $\partial \Omega$.
\item If $f(\partial \Omega)$ has only $A$-nondegenerate
CR singularities,
then $F$ is one-to-one on $\overline{\Omega}$ and is an embedding on all of $\overline{\Omega}$.
\end{enumerate}
\end{thm}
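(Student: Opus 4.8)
The plan is to build $F$ componentwise, identify $F(\overline{\Omega})$ with a bounded domain supplied by the Jordan--Brouwer theorem, and deduce injectivity from a degree count, sharpened in the nondegenerate case by a composition argument. To produce $F$, write $f = (f_1,\dots,f_{n},f_{n+1})$ in the coordinates $(z,s)$ on $\C^{n}\times\R$, so each $f_j$ is a real-analytic CR function on $\partial\Omega$, with $f_{n+1}$ real-valued. By the authors' earlier CR extension results \cites{crext1,crext2,crext3} (applicable since $\partial\Omega$ has only $A$-nondegenerate CR singularities) each $f_j$ extends to a real-analytic CR function $F_j$ on $\overline{\Omega}$; put $F = (F_1,\dots,F_{n+1})$. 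To see $F(\overline{\Omega})\subset\C^{n}\times\R$, note that $F_{n+1}$ is holomorphic along each bounded slice $\Omega_s = \{z : (z,s)\in\Omega\}$, hence $\Im F_{n+1}$ is pluriharmonic there; since the topological boundary of $\Omega_s$ in $\C^{n}$ lies in $\partial\Omega$, where $\Im F_{n+1}=\Im f_{n+1}=0$, the maximum principle forces $\Im F_{n+1}\equiv 0$. The same slicewise maximum-modulus estimate, applied to the difference of two candidates, shows $F$ is the \emph{unique} real-analytic CR extension of $f$; in particular, when $f$ carries CR points to CR points it coincides with the extension of Theorem~\ref{thm:main}, so $F|_{\overline{\Omega}\setminus\Sigma}$ is an immersion.

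Next I would locate the image. In case (a) the hypothesis gives that $f$ carries CR points to CR points; in case (b) this is automatic, since by Proposition~\ref{prop:imagenotnondegenerate} a CR point cannot map to an $A$-nondegenerate CR singularity when $n\geq 2$. Thus in both cases $F|_{\overline{\Omega}\setminus\Sigma}$ is an immersion. Set $M := f(\partial\Omega)$; as the image of $\partial\Omega$ under a real-analytic embedding it is a compact connected codimension-one submanifold of $\C^{n}\times\R\cong\R^{2n+1}$, so by Jordan--Brouwer it bounds a \emph{bounded} domain $\Omega'$ with $\partial\Omega'=M$, the complementary domain $\Omega''$ being unbounded. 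A clopen argument then gives $F(\overline{\Omega})=\overline{\Omega'}$: since $F$ is an immersion off $\Sigma\subset\partial\Omega$ and $f$ is an embedding, every point of $F(\overline{\Omega})$ lying in $\Omega'\cup\Omega''$ has its $F$-preimage in $\Omega$ and is therefore interior to $F(\overline{\Omega})$, so $F(\overline{\Omega})\cap\Omega'$ and $F(\overline{\Omega})\cap\Omega''$ are open, and, since $F(\overline{\Omega})$ is closed, also closed in $\Omega'$, resp.\ $\Omega''$; boundedness of $F(\overline{\Omega})$ kills the intersection with $\Omega''$, leaving $F(\overline{\Omega})=\overline{\Omega'}$, $F(\Omega)\subset\Omega'$, and $F^{-1}(M)=\partial\Omega$.

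Injectivity then follows from degree theory. The restriction $F|_{\Omega}\colon\Omega\to\Omega'$ is a proper immersion of equidimensional manifolds, hence a finite-sheeted covering map; and $F$, viewed as a map of compact oriented $(2n+1)$-manifolds with boundary $(\overline{\Omega},\partial\Omega)\to(\overline{\Omega'},M)$ restricting on the boundary to the diffeomorphism $f$, has degree $\deg f=\pm1$ by naturality of the boundary homomorphism $H_{2n+1}(\overline{\Omega},\partial\Omega)\to H_{2n}(\partial\Omega)$. Since the degree of such a covering equals, up to sign, its number of sheets, $F|_{\Omega}$ is a diffeomorphism onto $\Omega'$; with $F|_{\partial\Omega}=f$ and continuity this makes $F\colon\overline{\Omega}\to\overline{\Omega'}$ a homeomorphism, so $F$ is one-to-one on $\overline{\Omega}$. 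This proves (a): $F|_{\overline{\Omega}\setminus\Sigma}$ is an immersion (Theorem~\ref{thm:main}) and a homeomorphism onto its image (a restriction of the homeomorphism $F$), hence an embedding. For (b), where $M=\partial\Omega'$ has only $A$-nondegenerate CR singularities, $f^{-1}\colon M\to\partial\Omega\subset\C^{n}\times\R$ is a real-analytic embedding that is CR at CR points, so the extension theorem of \cite{crext2} applied to $\Omega'$ (no local flattening being needed, since $\Omega'$ already lies in $\C^{n}\times\R$) yields a real-analytic CR map $G\colon\overline{\Omega'}\to\C^{n}\times\R$ with $G|_M=f^{-1}$. Since $F(\overline{\Omega})=\overline{\Omega'}$, the composite $G\circ F\colon\overline{\Omega}\to\C^{n}\times\R$ is a real-analytic CR map equal to the identity on $\partial\Omega$, so $G\circ F=\id_{\overline{\Omega}}$ by uniqueness; differentiating, $dF_p$ has a left inverse for every $p\in\overline{\Omega}$, so $F$ is an immersion on all of $\overline{\Omega}$ and hence an embedding.

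The main obstacle I anticipate is the topological step, specifically the control near the CR singularities $\Sigma$: because $F$ need not be an immersion there, one must argue carefully that no interior point of $\overline{\Omega}$ maps into $M$, that $F(\overline{\Omega})$ meets only the bounded complementary component, and that the relative fundamental classes pair as claimed --- with the parallel caveat that slices $\Omega_s$ whose boundary in $\C^{n}$ is singular (over the critical levels $s$) must be treated by passing to generic levels. It is exactly here that the global flatness, which places $F(\overline{\Omega})$ as a full-dimensional body inside $\C^{n}\times\R$, does the work that the Fang--Huang flattening does for Theorem~\ref{thm:main} and Corollary~\ref{cor:main}.
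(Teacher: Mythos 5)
Your proposal is correct, and for the key injectivity step it takes a genuinely different route from the paper. The existence step (componentwise extension via \cite{crext2} plus the slicewise maximum principle forcing $\Im F_{n+1}\equiv 0$) and the uniqueness-of-extension device are exactly the paper's Lemma~\ref{lemma:extension} and Lemma~\ref{lemma:mapstoflat}, and your part (b) is essentially the paper's Lemma~\ref{lemma:embedatsing}: extend $f^{-1}$ over the flat image by \cite{crext2}, invoke uniqueness, and read off nonsingularity of $dF$ at $\Sigma$ (to justify that $f^{-1}$ is CR at CR points of $f(\partial\Omega)$ you should, as the paper does, cite Proposition~\ref{prop:CRdiffeo} together with Corollary~\ref{cor:imageofnondegeneratenotCR}). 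Where you diverge is injectivity in (a): the paper's Lemma~\ref{lemma:onetoonea} argues leaf by leaf, exploiting that $F_2=F_2(s)$, using Lemma~\ref{lemma:onetooneonboundarya} on noncritical leaves, a Hurwitz-type limit (Proposition~\ref{prop:limitonetoone}) on the critical leaves, and a mean-value-theorem plus $\dim\Sigma\le n<2n-1$ count to rule out collisions between different leaves; you instead use Jordan--Brouwer, the clopen argument identifying $F(\overline{\Omega})=\overline{\Omega'}$, properness of $F|_{\Omega}\colon\Omega\to\Omega'$ (a proper local diffeomorphism, hence a finite covering with all local degrees of one sign since $\det DF$ is nonvanishing on the connected $\Omega$), and the relative degree $\pm1$ forced by the boundary diffeomorphism $f$. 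This works, and the obstacles you flag at the end dissolve easily: since $\Sigma\subset\partial\Omega$, Theorem~\ref{thm:main} makes $F$ an immersion (hence open) at every interior point, so once $F(\overline{\Omega})\cap\Omega''=\emptyset$ no interior point can map into $M$, and the homological degree computation is the standard one for maps of compact oriented manifolds with boundary; singular slices play no role in your argument. The trade-off: your degree argument is more global and topological, bypassing the leafwise analysis and the dimension count on $\Sigma$ entirely (only Theorem~\ref{thm:main} and the boundary embedding are used), whereas the paper's slicewise machinery (Proposition~\ref{prop:flatCRsingsonleafs}, Lemmas~\ref{lemma:onetooneonboundarya} and~\ref{lemma:onetoonen1}, Proposition~\ref{prop:limitonetoone}) is precisely what the authors reuse to handle the $n=1$ case of Theorem~\ref{thm:flatn1}, where your dimension-independent degree step would still apply but the immersion input from Theorem~\ref{thm:main} is unavailable.
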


Theorem \ref{thm:flat} has the following corollary about automorphisms of bounded
domains in $\C^n \times \R$. To state the result, we need to extend the notion of 
CR diffeomorphism to domains in
$\C^n \times \R$ with boundary.  For real-analytic boundaries and maps
we make the following definition.
Suppose $\Omega_1 , \Omega_2 \subset \C^n \times \R$
are domains with real-analytic boundary.
We say $F \colon \overline{\Omega}_1 \to \overline{\Omega}_2$
is a real-analytic CR diffeomorphism
if $F$ is onto and
if there exist neighborhoods $U_1,U_2$ in $\C^{n} \times \C$
such that $\overline{\Omega}_j \subset U_j$
and
a biholomorphic map $G \colon U_1 \to U_2$ such that
$G|_{\overline{\Omega}_1} = F$.
One can make an equivalent (but more complicated) intrinsic definition
that would extend to smooth manifolds if needed (see Proposition \ref{prop:equivcr}).

\begin{cor}\label{cor:auto}
Let
$\Omega \subset \C^n \times \R$, $n \geq 2$,
be a bounded domain with connected
real-analytic boundary
such that $\partial \Omega$ has only $A$-nondegenerate
CR singularities.
Let $f\colon \partial \Omega \to \partial \Omega$ be a real-analytic
automorphism, in the sense that $f$ is a diffeomorphism and is CR
at CR points (considered as a map into $\C^n \times \R$).

Then, there exists a real-analytic CR diffeomorphism
$F\colon\overline{\Omega} \to \overline{\Omega}$
such that
$F|_{\partial \Omega} = f$.
\end{cor}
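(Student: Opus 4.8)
The plan is to produce $F$ by applying Theorem~\ref{thm:flat} and then to promote the resulting embedding to a diffeomorphism of $\overline{\Omega}$ onto itself that is the restriction of a biholomorphism. Regarded as a map into $\C^n \times \R$, the automorphism $f$ is a real-analytic embedding of $\partial\Omega$ that is CR at CR points, and its image $f(\partial\Omega) = \partial\Omega$ has by hypothesis only $A$-nondegenerate CR singularities; hence part~(b) of Theorem~\ref{thm:flat} applies and yields a real-analytic CR map $F\colon \overline{\Omega} \to \C^n \times \R$ with $F|_{\partial\Omega} = f$ that is one-to-one on $\overline{\Omega}$ and is an embedding on all of $\overline{\Omega}$. (By Proposition~\ref{prop:imagenotnondegenerate} the map $f$ automatically carries CR points to CR points and preserves $\Sigma$, so part~(a) applies as well, but this is not needed.)

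Next I would check that $F(\overline{\Omega}) = \overline{\Omega}$. Since $\overline{\Omega}$ is a compact $(2n+1)$-dimensional manifold with boundary and $F$ is an embedding into the equidimensional manifold $\C^n \times \R$, the image $F(\Omega)$ is open, $F(\overline{\Omega})$ is compact, and $F(\Omega) \cap \partial\Omega = \emptyset$ because $F$ is injective and $F(\partial\Omega) = \partial\Omega$. Thus $F(\Omega)$ is a bounded connected open subset of $(\C^n \times \R) \setminus \partial\Omega$; as $\partial\Omega$ is connected, the Jordan--Brouwer separation theorem gives that this complement has exactly two components, the bounded one $\Omega$ and an unbounded one, so $F(\Omega) \subseteq \Omega$. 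Moreover $F(\Omega)$ is closed in $\Omega$: if $x_k \in \Omega$ and $F(x_k) \to y \in \Omega$, compactness of $\overline{\Omega}$ produces a subsequential limit $x \in \overline{\Omega}$ with $F(x) = y$, and since $y \notin \partial\Omega$ we get $x \in \Omega$. Connectedness of $\Omega$ then forces $F(\Omega) = \Omega$, hence $F(\overline{\Omega}) = \overline{\Omega}$.

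Finally I would verify that $F$ matches the definition of a real-analytic CR diffeomorphism. Being real-analytic and CR on $\overline{\Omega} \subset \C^n \times \R$, the map $F$ extends to a holomorphic map $\widetilde{F}$ on a neighborhood $U_1$ of $\overline{\Omega}$ in $\C^n \times \C$; since $F$ is an embedding between real manifolds of equal dimension, its real differential is invertible at every point of $\overline{\Omega}$, so the complex Jacobian of $\widetilde{F}$ is invertible there, and the holomorphic inverse function theorem makes $\widetilde{F}$ a local biholomorphism near each point of $\overline{\Omega}$. A routine compactness argument, using that $F$ is injective on the compact set $\overline{\Omega}$, lets us shrink $U_1$ to a neighborhood $U_1'$ on which $\widetilde{F}$ is injective; then $G := \widetilde{F}\colon U_1' \to U_2 := \widetilde{F}(U_1')$ is biholomorphic, $U_2$ is an open neighborhood of $F(\overline{\Omega}) = \overline{\Omega}$, and $G|_{\overline{\Omega}} = F$, so $F\colon \overline{\Omega} \to \overline{\Omega}$ is the desired real-analytic CR diffeomorphism extending $f$. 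The essential input is Theorem~\ref{thm:flat}; the step most in need of care is the surjectivity argument, where one must correctly invoke that the connected hypersurface $\partial\Omega$ separates $\C^n \times \R$ into a bounded piece $\Omega$ and an unbounded one.
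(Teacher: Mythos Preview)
Your argument is correct and follows the paper's route: apply Theorem~\ref{thm:flat}(b) to get a one-to-one embedding $F$ extending $f$, and then upgrade $F$ to a real-analytic CR diffeomorphism of $\overline{\Omega}$. The paper compresses your last two paragraphs into a single appeal to Proposition~\ref{prop:equivcr} (whose proof complexifies both $F$ and $F^{-1}$), whereas you instead complexify only $F$ and invoke the holomorphic inverse function theorem together with a compactness argument; the paper also leaves the surjectivity $F(\overline{\Omega})=\overline{\Omega}$ entirely implicit, while you spell it out.

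One small slip in the surjectivity step: the clause ``$F(\Omega)$ is a bounded connected open subset \ldots\ so $F(\Omega)\subseteq\Omega$'' is not justified as written, since a bounded connected open set can perfectly well sit inside the unbounded component of $(\C^n\times\R)\setminus\partial\Omega$. The repair is already contained in your next sentence: your proof that $F(\Omega)$ is closed in $\Omega$ works verbatim with $\Omega$ replaced by $(\C^n\times\R)\setminus\partial\Omega$ (the limit point $y$ need only avoid $\partial\Omega$), so $F(\Omega)$ is open, closed, nonempty, connected, and bounded in that complement and hence coincides with its unique bounded component $\Omega$. Equivalently, note that $\partial\bigl(F(\Omega)\bigr)=F(\partial\Omega)=\partial\Omega$, so $F(\Omega)$ is clopen in the complement of $\partial\Omega$ and the same conclusion follows.
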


The regularity part of Theorem \ref{thm:flat} still holds
in the case $n=1$ as long as
we replace the condition of being CR with assuming an
extension along each leaf, that is, the set where $s$
is fixed in the $(z,s)$ coordinates.
We also need to assume the singularities are elliptic.
In this setting, this assumption means that locally
in the $(z,s)$ coordinates the boundary can be expressed as
\begin{equation}
s = \sabs{z}^2 + \lambda z^2 + \lambda \bar{z}^2 + O(\sabs{z}^3)
\end{equation}
for $0 \leq \lambda < \frac{1}{2}$.
We remark that in this case $\partial \Omega$ is automatically connected.

\begin{thm} \label{thm:flatn1}
Let $\Omega \subset \C \times \R$ be a bounded domain
with real-analytic boundary
such that $\partial \Omega$ has only elliptic CR singularities, and
let $\Sigma \subset \partial \Omega$ be the set of CR singularities
of $\partial \Omega$.
Let $f\colon \partial \Omega \to \C \times \R$ be a real-analytic
embedding that
takes CR points of $\partial \Omega$ to CR points of
$f(\partial \Omega)$.
Suppose that for every $c \in \R$
such that $\Omega \cap \{ s = c \}$ is nonempty
there exists a continuous map on $\overline{\Omega} \cap \{ s = c \}$,
holomorphic on $\Omega \cap \{ s = c \}$,
extending $f|_{\partial \Omega \cap \{ s = c \}}$.

Then, there exists a one-to-one real-analytic CR map
$F\colon\overline{\Omega} \to\C \times \R$ such that
$F|_{\partial \Omega} = f$ and
$F|_{\overline{\Omega} \setminus \Sigma}$ is an embedding.

If furthermore $f(\partial \Omega)$ has only elliptic CR singularities,
then $F$ is an embedding on $\overline{\Omega}$.
\end{thm}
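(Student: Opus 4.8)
\emph{Plan of proof.} The strategy is the one behind Theorem~\ref{thm:flat}, carried out leaf by leaf, where the leaves are the slices $\Omega\cap\{s=c\}$, $c\in\R$. Ellipticity of the CR singularities makes $s|_{\partial\Omega}$ a real-analytic Morse function whose critical set is exactly $\Sigma$ and all of whose critical points have index $0$ or $2$; since $\partial\Omega$ is connected and orientable this forces $\partial\Omega$ to be a sphere with exactly one local minimum $p_-$ (at height $c_-$) and one local maximum $p_+$ (at height $c_+$) of $s$, which in particular recovers the asserted automatic connectedness, and $\Sigma=\{p_-,p_+\}$. For $c\in(c_-,c_+)$ the real-analytic Morse lemma near $\Sigma$ (equivalently, the local normal form at the CR singularity) together with the implicit function theorem elsewhere shows that $\partial\Omega\cap\{s=c\}$ is a single real-analytic Jordan curve $\gamma_c$, that $\Omega\cap\{s=c\}$ is the Jordan domain $D_c$ bounded by $\gamma_c$ in the plane $\{s=c\}$, and that these domains vary real-analytically and shrink to the points of $\Sigma$ as $c\to c_\pm$. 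For each such $c$ let $G_c\colon\overline{D_c}\to\C\times\R$ be the map given by the hypothesis (continuous, holomorphic on $D_c$, equal to $f$ on $\gamma_c$). Its $\R$-component is a real-valued holomorphic function on the connected set $D_c$, hence a constant $\phi(c)$, so that $f_2|_{\gamma_c}\equiv\phi(c)$; its $\C$-component is determined on $\overline{D_c}$ by its boundary values through the maximum principle. Thus $G_c$ is unique, and setting $F:=G_c$ on $\overline\Omega\cap\{s=c\}=\overline{D_c}$ and $F:=f$ on $\Sigma$ produces a well-defined map $F\colon\overline\Omega\to\C\times\R$ with $F|_{\partial\Omega}=f$ that is holomorphic in $z$ along each leaf, i.e.\ CR in the sense appropriate to $n=1$; it is continuous at the points of $\Sigma$ because the leaves $\overline{D_c}$ and their images $G_c(\overline{D_c})$ shrink to single points there.

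\emph{Real-analyticity.} On $\Omega$ one represents the $\C$-component of $F$ by the Cauchy integral of $f$ over $\gamma_c$; since $\gamma_c$ and $f$ are real-analytic this is real-analytic in $(z,c)$, and $F_2=\phi(c)$ is real-analytic because $f_2=\phi\circ s$ is real-analytic on $\partial\Omega$ and $s$ is a submersion there. Near a CR point of $\partial\Omega$ the Schwarz reflection principle---real-analytic boundary values on the real-analytic curve $\gamma_c$, all depending real-analytically on $c$---extends the $\C$-component holomorphically across $\gamma_c$, and the same parameter dependence gives real-analyticity of $F$ up to $\partial\Omega\setminus\Sigma$. The remaining and delicate point is real-analyticity at the two elliptic CR singularities, where the leaf foliation degenerates. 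Here I would invoke the local theory of elliptic CR singular points: after a local real-analytic CR change of coordinates (the Moser--Webster normal form \cite{MW:normal}, as in \cite{huangkrantz} and exploited in \cite{crext1}) the family $\{\overline{D_c}\}$ is described by a real-analytic map from $\overline{\mathbb{D}}\times[c_-,c_-+\varepsilon)$ to $\overline\Omega$ that is real-analytic up to and including the degenerate value, and composing it with the real-analytic $f$---using that $f$ sends the nearby CR points to CR points---shows that $F$ is real-analytic at $p_-$, and likewise at $p_+$. \textbf{This local analysis at the elliptic singularities is the main obstacle.}

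\emph{Injectivity and embedding off $\Sigma$.} For $c\in(c_-,c_+)$, $G_c|_{\gamma_c}=f|_{\gamma_c}$ is a homeomorphism onto the Jordan curve $f(\gamma_c)\subset\{s=\phi(c)\}$, so by the argument principle $G_c$ maps $\overline{D_c}$ homeomorphically onto the closed Jordan domain bounded by $f(\gamma_c)$ and is biholomorphic on $D_c$; in particular $F$ is injective on each leaf and $\partial_z F_1\neq 0$ on $\Omega$. Moreover $\phi$ is strictly monotone on $(c_-,c_+)$: if $\phi'(c_0)=0$ then $df_2\equiv0$ along $\gamma_{c_0}$, so $df$ carries $T\partial\Omega$ into the complex line $\C\times\{0\}$ along $\gamma_{c_0}$, forcing $f(\partial\Omega)$ to be CR singular along $f(\gamma_{c_0})$ and contradicting that $f$ takes the CR points of $\gamma_{c_0}$ to CR points (compare the remarks around Proposition~\ref{prop:imagenotnondegenerate}). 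Hence distinct leaves have distinct $\R$-images, so if $F(p)=F(q)$ then $p$ and $q$ lie on the same leaf and thus $p=q$; so $F$ is injective on $\overline\Omega$, and, being a continuous injection from a compact space into a Hausdorff space, it is a homeomorphism onto its image. On $\overline\Omega\setminus\Sigma$ the differential $dF$ is nonsingular---the leaf directions being handled by $\partial_z F_1\neq0$ and the transverse direction by $\phi'\neq0$---so $F|_{\overline\Omega\setminus\Sigma}$ is an immersion as well as a homeomorphism onto its image, that is, an embedding.

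\emph{The elliptic-image case.} If in addition $f(\partial\Omega)$ has only elliptic CR singularities, then since $s\circ f=\phi\circ s$ on $\partial\Omega$ with $\phi$ monotone, the same Morse count applied to $s|_{f(\partial\Omega)}$ shows that its CR singular set is exactly $\{f(p_-),f(p_+)\}$, both elliptic. Applying the local theory of elliptic CR singularities at these points (again through the Moser--Webster normal form \cite{MW:normal} and \cite{crext1}) shows that the suitably rescaled leaf maps $G_c$ converge, as $c\to c_\pm$, to nonsingular linear maps, so $dF$ is nonsingular also at $p_-$ and $p_+$. Then $F$ is an immersion on all of $\overline\Omega$, and since it is already a homeomorphism onto its image it is an embedding on $\overline\Omega$.
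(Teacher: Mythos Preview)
Your overall architecture matches the paper's: the topology of $\partial\Omega$ via ellipticity (the paper states this as Lemma~\ref{lemma:topologyOmega}, citing \cite{crext1}), the leaf-by-leaf construction, the argument principle for injectivity on each leaf (the paper's Lemma~\ref{lemma:onetooneonboundaryn1}), and the mean-value/monotonicity argument for $\phi$ to rule out collisions between distinct leaves (the paper's Lemma~\ref{lemma:onetoonen1}). The main structural difference is that the paper does not reconstruct the extension $F$ and its real-analyticity from scratch; it simply invokes the main theorem of \cite{crext1} (recorded here as Lemma~\ref{lemma:extension1}) as a black box. What you flag as ``the main obstacle''---real-analyticity of $F$ at the elliptic singularities---is precisely the content of that earlier paper, so your sketch via Moser--Webster normal forms is in effect a partial re-derivation of \cite{crext1}. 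That is fine, but you should be aware that this step is substantial and your outline does not constitute a proof of it.

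The genuine gap is in the elliptic-image case. Your claim that ``the suitably rescaled leaf maps $G_c$ converge, as $c\to c_\pm$, to nonsingular linear maps'' is not justified and is not obvious: you would need uniform control on the conformal maps $G_c$ as the domains degenerate, and ellipticity of the \emph{image} singularity has to enter in a concrete way. The paper avoids this entirely by a symmetry trick (Lemma~\ref{lemma:embedatsingn1}): once $F$ is known to be a homeomorphism onto $F(\overline{\Omega})$, the inverse $F^{-1}$ is continuous and restricts to a holomorphic map on each leaf of $F(\Omega)$; since $F(\partial\Omega)$ has only elliptic CR singularities, one applies the extension theorem of \cite{crext1} \emph{again}, this time to $F^{-1}|_{F(\partial\Omega)}$, to conclude that $F^{-1}$ is real-analytic on $F(\overline{\Omega})$. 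In particular $F^{-1}$ is differentiable at $F(p_\pm)$, which forces $dF$ to be nonsingular at $p_\pm$. This is both shorter and rigorous, and it makes transparent exactly where the ellipticity hypothesis on the image is used.

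One minor point: you assert $\partial_z F_1\neq 0$ on $\Omega$ from the argument principle, but for the embedding statement on $\overline{\Omega}\setminus\Sigma$ you also need it on $\partial\Omega\setminus\Sigma$. The paper gets this from Proposition~\ref{prop:MimmersionatCR}; your Schwarz-reflection extension would also yield it, but you should say so.
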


We also have the corresponding corollary for automorphisms.

\begin{cor} \label{cor:flatn1}
Let 
$\Omega \subset \C \times \R$ be a bounded domain with
real-analytic boundary
such that $\partial \Omega$ has only elliptic 
CR singularities.
Let $f\colon \partial \Omega \to \partial \Omega$ be a real-analytic
diffeomorphism.
Suppose that for every $c \in \R$ such that
$\Omega \cap \{ s = c \}$ is nonempty
there exists a continuous map on $\overline{\Omega} \cap \{ s = c \}$,
holomorphic on $\Omega \cap \{ s = c \}$,
extending $f|_{\partial \Omega \cap \{ s = c \}}$.

Then, there exists a real-analytic CR diffeomorphism
$F\colon\overline{\Omega} \to \overline{\Omega}$
such that
$F|_{\partial \Omega} = f$.
\end{cor}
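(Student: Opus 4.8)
The plan is to obtain $F$ by applying Theorem~\ref{thm:flatn1} to $f$, now regarded as a real-analytic embedding $\partial\Omega \to \C\times\R$, and then to promote the resulting map to a CR diffeomorphism via a topological surjectivity argument and a holomorphic-extension argument.

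To invoke Theorem~\ref{thm:flatn1} one must check its hypotheses for $f$. The leafwise-extension hypothesis is assumed, $f$ is a real-analytic embedding with image $f(\partial\Omega) = \partial\Omega$, and $\partial\Omega$ has only elliptic CR singularities; the one point needing an argument is that $f$ takes CR points to CR points. This is where the leafwise-extension hypothesis is used. For each $c$ with $\Omega \cap \{s = c\}$ nonempty, the assumed continuous extension of $f|_{\partial\Omega \cap \{s=c\}}$ is holomorphic on the planar domain $\Omega\cap\{s=c\}$ and takes values in $\C\times\R$; since a holomorphic function on a planar domain with values in $\R$ is locally constant, the $\R$-component of this extension is constant on each connected component, so $f$ maps each slice $\partial\Omega\cap\{s=c\}$ into a single leaf. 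Thus $f$ is leaf-preserving: $s\circ f = \psi\circ s$ on $\partial\Omega$ for a continuous strictly monotone $\psi$ of the interval $s(\partial\Omega)$ onto itself. Because a complex line contained in $\C\times\R$ must be the leaf direction $\C\times\{0\}$, the CR singularities of a real hypersurface in $\C\times\R$ are precisely the critical points of the leaf function; using the elliptic (nondegenerate extremum) normal form at a CR singularity $p$ of $\partial\Omega$ together with the fact that $f$ is a local diffeomorphism near $p$, the image $f(p)$ is again a nondegenerate extremum of $s|_{\partial\Omega}$, hence a CR singularity. Therefore $f(\Sigma) = \Sigma$ and $f$ carries CR points to CR points. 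Theorem~\ref{thm:flatn1}, including its ``furthermore'' clause (applicable since $f(\partial\Omega) = \partial\Omega$ has only elliptic CR singularities), then yields a one-to-one real-analytic CR map $F\colon\overline\Omega\to\C\times\R$ with $F|_{\partial\Omega} = f$ that is an embedding on all of $\overline\Omega$.

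Next I would verify $F(\overline\Omega) = \overline\Omega$. Since $F$ is an immersion of the $3$-dimensional $\overline\Omega$ into the $3$-dimensional $\C\times\R$, the set $F(\Omega)$ is open, and by injectivity of $F$ together with $F(\partial\Omega) = \partial\Omega$ it is disjoint from $\partial\Omega$; being bounded and connected it is contained in $\Omega$. On the other hand $\overline{F(\Omega)} = F(\overline\Omega) = F(\Omega)\cup\partial\Omega$, so $F(\Omega)$ is also closed in $\Omega$, whence $F(\Omega) = \Omega$ by connectedness. Thus $F\colon\overline\Omega\to\overline\Omega$ is a continuous bijection of a compact space, hence a homeomorphism, restricting to $f$ on $\partial\Omega$. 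To finish, one produces the biholomorphic extension required by the definition of a real-analytic CR diffeomorphism recalled before Corollary~\ref{cor:auto}. Being real-analytic up to the boundary and CR, $F$ extends, by real-analyticity together with leaf-preservation, to a holomorphic map $G$ on a neighborhood $U_1$ of $\overline\Omega$ in $\C\times\C$ of the triangular form $G(z,w) = \bigl(G_1(z,w),\, \psi(w)\bigr)$, where $\psi$ is real-analytic and real on $\R$; hence $\det dG = (\partial G_1/\partial z)\,\psi'$. Comparing with the real Jacobian determinant of the embedding $F$ on $\overline\Omega$, which equals $\sabs{\partial G_1/\partial z}^2\,\psi'$ up to sign and is nonzero at every point of $\overline\Omega$ because $F$ is an embedding there, one gets $\partial G_1/\partial z \neq 0$ and $\psi' \neq 0$, so $\det dG \neq 0$ on $\overline\Omega$. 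Therefore $G$ is a local biholomorphism near each point of $\overline\Omega$, and since $F = G|_{\overline\Omega}$ is injective on the compact set $\overline\Omega$, after shrinking $U_1$ the map $G\colon U_1 \to U_2 := G(U_1)$ is biholomorphic with $G|_{\overline\Omega} = F$ and $G(\overline\Omega) = \overline\Omega$. This is exactly the statement that $F$ is a real-analytic CR diffeomorphism of $\overline\Omega$ onto itself with $F|_{\partial\Omega} = f$.

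I expect the main obstacle to lie in the hypothesis verification of the second paragraph: extracting leaf-preservation of $f$ from the leafwise-extension hypothesis and controlling $f$ near the CR singularities so that $f(\Sigma) = \Sigma$. Once that is in place, the remainder is routine --- a direct appeal to Theorem~\ref{thm:flatn1} together with standard point-set topology and the standard fact that a real-analytic CR map with nonvanishing complex Jacobian extends to a local biholomorphism.
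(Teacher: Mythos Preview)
Your argument is essentially correct and arrives at the same conclusion, but it differs from the paper's in two places. For the step ``CR points go to CR points,'' you work directly with $f$ on $\partial\Omega$: the leafwise-extension hypothesis forces the second component of each slice-extension to be constant, giving $s\circ f = \psi\circ s$ with $\psi$ a monotone bijection of the $s$-interval, so $f$ permutes the two extremal (CR singular) points. The paper instead first produces the extension $F$ via Lemma~\ref{lemma:extension1}, observes (Lemma~\ref{lemma:mapstoflat}) that $F_2$ depends only on $s$, and then reads off from the triangular derivative $\bigl(\begin{smallmatrix}\partial F_1/\partial z & \partial F_1/\partial s\\ 0 & \partial F_2/\partial s\end{smallmatrix}\bigr)$ that the pushforward of the tangent space at a CR singularity is again orthogonal to $\partial/\partial s$ --- a shorter route that avoids discussing monotonicity of $\psi$. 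For the final step, you complexify $F$ directly and check $\det dG\neq 0$ via the real Jacobian $\sabs{\partial G_1/\partial z}^2\,\psi'$; the paper instead notes that $F^{-1}$ is real-analytic (since $F$ is an embedding in equal dimensions) and invokes Proposition~\ref{prop:equivcr}, which complexifies both $F$ and $F^{-1}$. One small gap to patch: in your surjectivity paragraph, ``bounded and connected'' does not by itself force $F(\Omega)\subset\Omega$ rather than the unbounded component of $(\C\times\R)\setminus\partial\Omega$; you need one more sentence, for instance the leafwise observation that $z\mapsto F_1(z,s)$ is a holomorphic injection of $\overline{\Omega}_{(s)}$ taking $\partial\Omega_{(s)}$ onto $\partial\Omega_{(\psi(s))}$, hence (argument principle) $\Omega_{(s)}$ onto $\Omega_{(\psi(s))}$.
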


Let us outline the organization of the paper.  First, in
\S\ref{section:prelim} we provide detailed definitions and
some basic results.
In \S\ref{section:examples} we provide several examples
to prove the necessity of the hypotheses in our theorems and to illustrate
the issues encountered.  In \S\ref{section:proofofmain} and
\S\ref{section:proofofcor} we prove
Theorem~\ref{thm:main} and Corollary~\ref{cor:main}, respectively.  Then
in \S\ref{section:flat} and \S\ref{section:flatn1}
we prove the results in the case of a globally flat target.

\section{Preliminaries}\label{section:prelim}

Let us precisely define the terms used.
First let us start with the CR structure.
As noted earlier, we will generally deal with smooth
real submanifolds in $\C^n \times \R$ using coordinates $(z,s)$,
or in $\C^n \times \C = \C^{n+1}$ with coordinates $(z,w)$.
We consider $\C^n \times \R$ naturally embedded in $\C^{n+1}$.
Let $J$ denote the complex structure on the tangent bundle $T \C^{n+1}$.
Let $M$ be a submanifold and $T_p M$ its tangent space at $p \in M$.
Denote by
\begin{equation}
    T_p^c M = T_pM \cap J(T_pM)
\end{equation}
the \emph{complex tangent space} (a real vector space).
We also decompose the complexified $T_p^c M$ into holomorphic
and antiholomorphic vectors:
\begin{equation}
    \C \otimes T_p^c M = T_p^{1,0} M \oplus T_p^{0,1} M .
\end{equation}
A submanifold is said to be CR at a point $p$ if $T_q^c M$ (or $T_q^{0,1} M$) has
constant dimension as $q$ varies near $p$.  Otherwise, $M$ is said to
be CR singular at $p$.  For a $2n$-dimensional submanifold of
$\C^{n+1}$ there are only two possibilities: $T_p^{0,1} M$
is of complex dimension $n$ or $n-1$, and this dimension is called
the CR dimension.  Generically a real submanifold has CR dimension $n-1$,
and generically CR singularities are isolated.  When the CR dimension is locally
$n$,  $M$ is a complex manifold (the Newlander--Nirenberg theorem).

For $2n$-dimensional submanifolds of the Euclidean Levi-flat
$\C^n \times \R$,
the CR structure is relatively easy to describe.
The subsets of the form $\{ (z,s) : s = s_0 \}$ are called
leaves of this Levi-flat.  The dimension of $T_p^{0,1} M$ is
$n$ if and only if $M$ is tangent to a leaf at $p$.

Let $M \subset \C^{n+1}$ be a $2n$-dimensional submanifold that is not a complex manifold, and assume $0 \in M$.  The manifold
$M$ has a CR singularity at
the origin if it is tangent to a complex hypersurface
at the origin.  After a rotation we may assume
the coordinates are
$(z,w) \in \C^{n} \times \C$
and
$M$ is tangent to the complex hypersurface $w=0$
at the origin.
Then near the origin $M$ is defined by
\begin{equation} \label{eq:CRsingulargeneral}
\begin{split}
  w & = \rho(z,\bar{z}) =
  A(z,\bar{z}) + B(z,z) + \overline{C(z,z)} + O(\snorm{z}^3)
  \\
  & = Q(z,\bar{z}) + O(\snorm{z}^3),
\end{split}
\end{equation}
where $A$ is a sesquilinear form and $B$ and $C$
are bilinear forms.
It is not difficult to show that any biholomorphic change of coordinates
preserving the form \eqref{eq:CRsingulargeneral} transforms the
matrix defining $A$ by $*$-congruence and multiplication by constants.
On the other hand, the form $B$ can be changed arbitrarily via
a biholomorphic change of coordinates.
Therefore, if we are working up to a change of coordinates,
it may be convenient to set $C = B$.

We are mostly interested in manifolds that are
\emph{holomorphically flat}, that
is, ones that can be put into a form where $\rho$ is real-valued via a
biholomorphic map.
A necessary (but not sufficient) condition for $M$
being holomorphically flat
is that after a change of variables $A$ is a Hermitian
form, that is, $A(z,\bar{z})$ is real-valued.
As we mentioned above, we can
make the rest of the quadratic terms real-valued by making $C=B$.

We will say that the CR singularity is \emph{$A$-nondegenerate} if $A$ is
nondegenerate, that is, the underlying matrix is nonsingular.
Similarly, we will say that the CR singularity is
\emph{$Q$-nondegenerate} if $Q$ is
nondegenerate as a real-bilinear form.
From the point of view of several complex variables,
it is most convenient to assume that $A$ is nondegenerate,
since it carries the Levi-form information of $M$ for points near
the origin.

A CR singularity is \emph{elliptic}
if after changing coordinates to make $B=C$,
we can make $A$ Hermitian positive definite by a rotation in the $w$ coordinate,
and
for any $s_0$ the sets
\begin{equation}
\left\{ z \in \C^n : s_0 =  A(z,\bar{z}) + B(z,z) +
\overline{B(z,z)} \right\}
\end{equation}
are compact (empty, isolated points, or ellipsoids).
By simultaneously diagonalizing $A$ and $B$
(possible if $A$ is positive definite),
at an elliptic CR singularity the manifold
can be put into the form
\begin{equation} \label{eq:typeofsing2}
w = \sum_{j=1}^{n} \bigl( \sabs{z_j}^2 + \lambda_j (z_j^2 + \bar{z}_j^2)
\bigr) +
O(\snorm{z}^3),
\qquad \text{where $0 \leq \lambda_j < \frac{1}{2}$}.
\end{equation}

We also need some preliminary results.
Some of the following propositions do not require
real-analyticity, so we state them
for smooth ($C^\infty$) manifolds and maps.

\begin{prop} \label{prop:sizeofCRsing}
Suppose $M \subset \C^n \times \R$ is a $2n$-dimensional real-analytic
CR singular submanifold
with a CR singularity at $p \in M$.  The set $\Sigma \subset M$
of CR singularities is a real-analytic subvariety, and
\begin{enumerate}[(a)]
\item
If $M$ is $Q$-nondegenerate at $p$, then $p$ is an isolated
CR singularity.
\item
If $M$ is $A$-nondegenerate at $p$, then the germ $(\Sigma,p)$
has real dimension at most $n$.
\end{enumerate}
\end{prop}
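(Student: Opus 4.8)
First I would fix a convenient local model for $M$ near $p$. Since $M\subset\C^n\times\R$ is $2n$-dimensional and CR singular at $p$, the tangent space $T_pM$ is a complex $n$-dimensional subspace; as it lies in $T_p(\C^n\times\R)$, and a complex subspace contained in $\C^n\times\R$ must also lie in $\C^n\times i\R$ and hence in their intersection $\C^n\times\{0\}$, we get $T_pM=\C^n\times\{0\}$. Thus the projection $(z,s)\mapsto z$ restricts to a local diffeomorphism of $M$ near $p$, so, translating $p$ to $0$, near $0$ the manifold $M$ is the graph $s=\rho(z,\bar z)$ with $\rho$ real-analytic, real-valued, and $\rho=A(z,\bar z)+B(z,z)+\overline{C(z,z)}+E$, $E\in O(\snorm{z}^3)$, $A$ Hermitian. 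A nearby point $(z_0,\rho(z_0,\bar z_0))$ is CR singular exactly when its tangent space is again the leaf direction, i.e.\ when $d\rho=0$ at $z_0$; since $\rho$ is real-valued this is the same as $\rho_{z_1}=\cdots=\rho_{z_n}=0$. Hence, near $p$, the set $\Sigma$ is the critical locus of $\rho$, the common zero set of the real-analytic functions $\Re\rho_{z_j}$ and $\Im\rho_{z_j}$; near CR points $\Sigma$ is locally empty, the CR-singular set being closed because $q\mapsto\dim_\R(T_qM+J T_qM)$ is lower semicontinuous. This already gives that $\Sigma$ is a real-analytic subvariety of $M$.

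For part (a): $Q$-nondegeneracy means that the real Hessian of $\rho$ at $0$ — equivalently the Hessian of the quadratic form $Q$ — is invertible. Then the real-analytic map $z\mapsto(\rho_{z_1},\dots,\rho_{z_n})$ from $\R^{2n}$ to $\R^{2n}$ has invertible differential at $0$, hence is a local diffeomorphism there, so $0$ is an isolated zero; that is, $p$ is an isolated CR singularity.

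For part (b): I would compute the first-order part of $z\mapsto(\rho_{z_1},\dots,\rho_{z_n})$. Writing $A(z,\bar z)=z^T\mathbf A\bar z$ with $\mathbf A$ Hermitian and invertible, and $B(z,z)=z^T\mathbf B z$, one has $\partial_{z_j}A=(\mathbf A\bar z)_j$, $\partial_{z_j}B=\bigl((\mathbf B+\mathbf B^T)z\bigr)_j$, and $\partial_{z_j}\overline{C(z,z)}=0$ since $\overline{C(z,z)}$ depends only on $\bar z$; the remainder contributes only $O(\snorm{z}^2)$. So the differential at $0$ of $z\mapsto(\rho_{z_1},\dots,\rho_{z_n})$, viewed as a real-linear map $\C^n\to\C^n$, is $z\mapsto\mathbf A\bar z+(\mathbf B+\mathbf B^T)z$, with kernel $V=\{z\in\C^n:\mathbf A\bar z+(\mathbf B+\mathbf B^T)z=0\}$. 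The crucial point is that $V\cap iV=\{0\}$: if $w\in V\cap iV$ then both $w$ and $-iw$ lie in $V$; substituting $-iw$ into the defining equation and dividing by $i$ gives $\mathbf A\bar w-(\mathbf B+\mathbf B^T)w=0$, and adding this to $\mathbf A\bar w+(\mathbf B+\mathbf B^T)w=0$ yields $\mathbf A\bar w=0$, so $w=0$ because $\mathbf A$ is invertible. Since $V\cap iV=\{0\}$ we get $\dim_\R V\le n$, hence the differential above has rank at least $n$. Choosing $n$ of the $2n$ scalar components of $(\Re\rho_{z_j},\Im\rho_{z_j})$ whose differentials at $0$ are linearly independent, the implicit function theorem makes their common zero set a real-analytic submanifold of dimension $n$ through $0$ that contains $\Sigma$; therefore the germ $(\Sigma,p)$ has real dimension at most $n$.

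The main obstacle is the last linear-algebra step: noticing that $B$ and $C$ do not enter the part of $\partial_{z_j}\rho$ linear in $\bar z$, so only $A$ controls the linearization, and that Hermitian-invertibility of $\mathbf A$ forces the kernel $V$ of that linearization to contain no complex line, whence $\dim_\R V\le n$. Everything else — reducing to the graph form, identifying $\Sigma$ with the critical locus of $\rho$, and the two applications of the implicit function theorem — is routine bookkeeping.
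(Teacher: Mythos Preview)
Your proof is correct and follows essentially the same approach as the paper's: both identify $\Sigma$ locally with the critical locus $\{\nabla\rho=0\}$, then control its dimension by the linear part $\nabla Q$. The only difference is that where the paper invokes \cite{crext2}*{Lemma 2.1} for the bound $\dim_\R\{\nabla Q=0\}\le n$ under $A$-nondegeneracy, you supply the argument directly via the totally-real observation $V\cap iV=\{0\}$, and you pass from the linearization to $\Sigma$ itself by selecting $n$ independent components and applying the implicit function theorem, which is a clean way to make precise the paper's remark that the higher-order terms cannot raise the dimension.
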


\begin{proof}
We work locally near $p$.
Suppose $p=0$ and as before write the manifold as
\begin{equation}
s = A(z,\bar{z}) + B(z,z) + \overline{B(z,z)} +E(z,\bar{z})
= Q(z,\bar{z}) + E(z,\bar{z}) = \rho(z,\bar{z}) .
\end{equation}
Parametrize $M$ by $z$.
The set of CR singularities is where the manifold is tangent to
a leaf $\{ s = s_0 \}$.
That is, $\Sigma$ is the set where $\nabla \rho = 0$.
In particular, it is a real-analytic set.
In \cite{crext2}*{Lemma 2.1}, we found that $\nabla Q = 0$ defines
a real subspace of dimension at most $n$ if $A$
is nondegenerate.
(Lemma 2.1 in \cite{crext2} states $n \geq 2$, but
the proof holds for $n=1$.)
If $Q$ is nondegenerate as a real bilinear form,
then $\nabla Q = 0$ only at the origin.
As $\rho = Q+E$ and $E$ vanishes to higher order, the result
follows, that is, the germ at 0 of the set defined
by $\nabla \rho = 0$ cannot be of larger dimension than the
set defined by $\nabla Q = 0$.
A quick way to see this is to notice that $\nabla Q$
is linear, and the zero set of $\nabla \rho$ is a
subset of the zero set where we consider only the components
corresponding to nonzero eigenvalues of $\nabla Q$.
\end{proof}

\begin{prop} \label{prop:CRdiffeo}
If $M \subset \C^m$ and $\widetilde{M} \subset \C^m$ are
smooth CR submanifolds of
the same CR dimension
and $f \colon M \to \widetilde{M}$ is
a CR map that is a diffeomorphism onto $\widetilde{M}$, then
$f$ is a CR diffeomorphism. In other words, the inverse is a CR map.
\end{prop}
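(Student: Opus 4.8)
The plan is to verify the CR condition for $f^{-1}$ pointwise, using nothing more than the linear algebra of the differential. First I would record the standard reformulation: for an embedded CR submanifold $M \subseteq \C^m$ and a smooth map $g$ with $g(M) \subseteq \widetilde{M}$, the map $g$ is CR if and only if for every $p \in M$ the ($\C$-linearly extended) differential satisfies $dg_p\bigl(T^{0,1}_p M\bigr) \subseteq T^{0,1}_{g(p)}\widetilde{M}$. Indeed, each component of a CR map into $\C^m$ is a CR function, so $dg_p$ sends $T^{0,1}_p M$ into $T^{0,1}_{g(p)}\C^m$, while the inclusion $g(M) \subseteq \widetilde{M}$ forces $dg_p\bigl(T^{0,1}_p M\bigr) \subseteq \C \otimes T_{g(p)}\widetilde{M}$; intersecting and using $T^{0,1}_q \widetilde{M} = \bigl(\C \otimes T_q \widetilde{M}\bigr) \cap T^{0,1}_q \C^m$ gives the claim.

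Next, fix $p \in M$ and put $q = f(p)$. Since $f$ is a diffeomorphism onto $\widetilde{M}$, the differential $df_p \colon T_p M \to T_q \widetilde{M}$ is an $\R$-linear isomorphism, hence its complexification $df_p \colon \C \otimes T_p M \to \C \otimes T_q \widetilde{M}$ is a $\C$-linear isomorphism. Because $f$ is CR, $df_p\bigl(T^{0,1}_p M\bigr) \subseteq T^{0,1}_q \widetilde{M}$. Here I would use the hypothesis that $M$ and $\widetilde{M}$ have the same CR dimension $k$: both $T^{0,1}_p M$ and $T^{0,1}_q \widetilde{M}$ have complex dimension $k$, and since $df_p$ is injective it carries $T^{0,1}_p M$ isomorphically onto a $k$-dimensional subspace of $T^{0,1}_q \widetilde{M}$, hence onto all of it.

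It then follows that $d(f^{-1})_q = (df_p)^{-1}$ maps $T^{0,1}_q \widetilde{M}$ onto $T^{0,1}_p M$; in particular $d(f^{-1})_q\bigl(T^{0,1}_q \widetilde{M}\bigr) \subseteq T^{0,1}_p M$. As $f$ is a diffeomorphism, $f^{-1}$ is smooth, and as $p$ runs over $M$ the point $q = f(p)$ runs over all of $\widetilde{M}$, so this inclusion holds at every point of $\widetilde{M}$. By the reformulation above, $f^{-1}$ is a CR map, as claimed.

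I do not anticipate a genuine obstacle: the content is entirely the two observations that (i) the CR condition on a map between embedded CR submanifolds is equivalent to the differential preserving the $T^{0,1}$ bundles, and (ii) equality of CR dimensions upgrades the inclusion $df_p(T^{0,1}_p M) \subseteq T^{0,1}_q \widetilde{M}$ to an equality, which is exactly what makes it invertible. The smoothness of $f^{-1}$ is part of the hypothesis that $f$ is a diffeomorphism, so nothing needs to be proved there.
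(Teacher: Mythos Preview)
Your argument is correct and is essentially the same as the paper's own proof: both use that the differential of a CR map sends the CR bundle of $M$ into that of $\widetilde{M}$, then invoke invertibility of $df_p$ and equality of CR dimensions to conclude that $(df_p)^{-1}$ preserves the CR bundle. You have simply spelled out more of the linear algebra (and worked with $T^{0,1}$ rather than $T^{1,0}$, which is immaterial).
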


\begin{proof}
Because the map $f$ is CR,
it takes
$T_q^{(1,0)} M$ to $T_{f(q)}^{(1,0)} \widetilde{M}$ for all $q \in M$.
As $f$ is a diffeomorphism,
the total derivative is invertible. Thus $f^{-1}$ takes
$T_{f(q)}^{(1,0)} \widetilde{M}$ onto $T_q^{(1,0)} M$.
\end{proof}

Let us show that a real-analytic CR function on the closure of 
a domain with real-analytic boundary
in $\C^n \times \R$ complexifies in the correct manner.

\begin{prop} \label{prop:flatCRcomplexify}
Let $\Omega \subset \C^n \times \R$, $n \geq 1$,
be a domain with real-analytic boundary,
and let 
$F \colon \overline{\Omega} \to \C$ be real-analytic such that
$F|_{\Omega}$ is CR.
Then there exist a domain $U \subset \C^n \times \C$ with $\overline{\Omega} \subset U$
and a holomorphic function $G \colon U \to \C$ such that $G|_{\overline{\Omega}} = F$.
\end{prop}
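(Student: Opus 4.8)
The plan is to first observe that $F$, being real-analytic on a neighborhood of $\overline{\Omega}$ (in the ambient sense — real-analytic on $\overline{\Omega}$ means it is the restriction of a real-analytic function defined on some open set $V \subset \C^n \times \R$ containing $\overline{\Omega}$), complexifies automatically in all $n+1$ complex variables $z$ and $w = s + it$. Indeed, writing $F$ in terms of $(z,\bar z, s)$ and using the power series expansion around each point of $\overline{\Omega}$, one gets a holomorphic function $\widetilde G$ on some open set $\widetilde U \subset \C^n \times \C^n \times \C$ (variables $z$, $\zeta$, $w$) with $\widetilde G(z,\bar z, s) = F(z,s)$ whenever $(z,s) \in \overline{\Omega}$. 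The issue is that a priori $\widetilde G$ genuinely depends on the ``barred'' slot $\zeta$, and we want to set $\zeta = \bar z$ and show the result extends off $\overline{\Omega}$ as a function of $(z,w)$ alone — equivalently, we want to kill the $\zeta$-dependence.

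The key step is to use the CR hypothesis to show $\partial \widetilde G / \partial \zeta_k \equiv 0$ on $\widetilde U$ (after possibly shrinking). As noted in the preliminaries, the hypothesis that $F|_\Omega$ is CR on the hypersurface-type Levi-flat $\C^n \times \R$ means $\partial F / \partial \bar z_k = 0$ at all points of $\Omega$, hence by continuity of the real-analytic derivative at all points of $\overline{\Omega}$. Now $\partial F/\partial \bar z_k$ at $(z,s)$ equals $(\partial \widetilde G/\partial \zeta_k)(z,\bar z, s)$; so $\partial \widetilde G/\partial \zeta_k$ vanishes on the ``real'' set $\{(z,\bar z, s) : (z,s) \in \overline{\Omega}\}$. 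This set is a maximally totally real (generic) submanifold of $\C^n \times \C^n \times \C$ of the right dimension near any point of $\Omega$ (it is a piece of $\{\zeta = \bar z, w \in \R\}$, which is totally real of real dimension $2n+1$ inside the $(2n+1)$-complex-dimensional ambient space — wait, that is $\C^{2n+1}$, real dimension $4n+2$, and our set has real dimension $2n+1$, so it is generic). A holomorphic function vanishing on a generic real-analytic submanifold vanishes identically on a neighborhood; applying this to each $\partial \widetilde G/\partial \zeta_k$ gives that $\widetilde G$ is independent of $\zeta$ near the real set, i.e., $\widetilde G = G(z,w)$ for a holomorphic $G$ on an open set $U \subset \C^n \times \C$ containing $\overline{\Omega}$ (using connectedness of $\overline{\Omega}$ to patch the local pieces, or just covering $\overline{\Omega}$ by finitely many polydiscs and invoking the identity theorem on overlaps). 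Finally $G|_{\overline{\Omega}} = \widetilde G|_{\zeta = \bar z, t = 0} = F$, as desired.

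The main obstacle is the bookkeeping at the boundary: one must be careful that the complexification $\widetilde G$ is defined on a full neighborhood of $\overline{\Omega}$ in the complexified space, and that the totally-real-vanishing argument can be run at boundary points, not just interior points. At an interior point $(z_0, s_0) \in \Omega$ the argument above is immediate. At a boundary point $p \in \partial\Omega$, I would not try to run the vanishing argument directly at $p$; instead I would note that the zero set of each holomorphic $\partial\widetilde G/\partial\zeta_k$ contains the totally real piece over $\Omega$ near any interior point approaching $p$, hence (the zero set being a closed analytic subvariety) contains a full neighborhood of $p$ in $\C^n \times \C^n \times \C$ by analytic continuation along the connected real set $\overline{\Omega}$, shrinking $\widetilde U$ if necessary so that $\widetilde U$ is connected and the real set $\{(z,\bar z, s): (z,s)\in\overline\Omega\}$ is contained in the closure of the zero set. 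Thus $\widetilde G$ is $\zeta$-independent on all of $\widetilde U$, and restricting to the slice $\zeta = \bar z$ (a well-defined operation once there is no $\zeta$-dependence) produces the desired $G$ on $U = \{(z,w) : (z,\bar z, w) \in \widetilde U\}$, which is an open set containing $\overline{\Omega}$. Compactness of $\overline{\Omega}$ ensures $U$ can be taken to be a genuine (bounded) neighborhood.
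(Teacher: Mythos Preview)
Your argument is correct, but it takes a more elaborate route than the paper's. You complexify all $2n+1$ real variables at once, landing in $\C^{2n+1}$ with an auxiliary holomorphic $\widetilde G(z,\zeta,w)$, and then kill the $\zeta$-dependence by the totally-real uniqueness principle applied to each $\partial\widetilde G/\partial\zeta_k$. The paper instead stays in $\C^n\times\R$ as long as possible: it first extends $F$ real-analytically to an open $W\subset\C^n\times\R$ containing $\overline{\Omega}$, then observes that the equations $\partial F/\partial\bar z_k=0$ are real-analytic identities holding on the open set $\Omega\subset W$ and hence propagate to all of $W$ by the identity theorem for real-analytic functions; only then does it complexify, and in just the single variable $s\mapsto w$. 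Both arguments hinge on the same mechanism (analytic continuation of the CR equations from $\Omega$ to a full neighborhood), but the paper's version avoids the detour through $\C^{2n+1}$ and the bookkeeping you flag in your last paragraph about boundary points, since the real-analytic identity theorem already operates on the open set $W$ without any boundary subtleties.
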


As we mentioned in the introduction we will normally just say that $F$ is CR,
which is equivalent
to $F|_{\Omega}$ being CR.

\begin{proof}
As $F$ is real-analytic up to the boundary, there exist a domain $W \subset \C^n \times \R$
with $\overline{\Omega} \subset W$ and a real-analytic
function $\widetilde{F} \colon W \to \C$
such that $\widetilde{F}|_{\overline{\Omega}} = F$.  Because $\widetilde{F}$ is real-analytic and CR
on an open subset of $W$,  it is CR on all of $W$.  Therefore, the
real-analytic $\widetilde{F}$ complexifies to a function $G$ on a domain $U$ as in the statement.
\end{proof}

Let us show that an intrinsic definition of
a real-analytic CR diffeomorphism of domains with
boundaries in $\C^n \times \R$ is equivalent to the one
given in the introduction.

\begin{prop} \label{prop:equivcr}
Suppose $\Omega_1 , \Omega_2 \subset \C^n \times \R$
are domains with real-analytic boundary.
Let $F \colon \overline{\Omega}_1 \to \overline{\Omega}_2$
be a bijective real-analytic map
(up to the boundary) such that $F^{-1}$ is real-analytic
and 
$F|_{\Omega_1}$ is CR.

Then, there exist neighborhoods $U_1,U_2$ in $\C^{n} \times \C$
such that $\overline{\Omega}_j \subset U_j$, and
a biholomorphic map $G \colon U_1 \to U_2$ such that
$G|_{\overline{\Omega}_1} = F$.
\end{prop}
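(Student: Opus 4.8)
The plan is to complexify both $F$ and $F^{-1}$ using Proposition~\ref{prop:flatCRcomplexify}, and then glue the resulting holomorphic maps together into a single biholomorphism by a connectedness argument. First I complexify $F$. Since $\overline{\Omega}_2 \subset \C^n \times \R$, write $F = (F_1,\dots,F_{n+1})$ in the coordinates $(z,w) \in \C^n \times \C$; each component $F_j \colon \overline{\Omega}_1 \to \C$ is then real-analytic up to the boundary and CR on $\Omega_1$ (the last, real-valued, component being viewed as $\C$-valued). Applying Proposition~\ref{prop:flatCRcomplexify} to each $F_j$, intersecting the finitely many resulting neighborhoods of $\overline{\Omega}_1$, and passing to the connected component containing $\overline{\Omega}_1$ (legitimate since $\Omega_1$, hence $\overline{\Omega}_1$, is connected), I obtain a connected neighborhood $U_1$ of $\overline{\Omega}_1$ in $\C^n \times \C$ and a holomorphic $G \colon U_1 \to \C^{n+1}$ with $G|_{\overline{\Omega}_1} = F$; note $G(\overline{\Omega}_1) = \overline{\Omega}_2$.

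Next I complexify $F^{-1}$. Since $\C^n \times \R$ is a CR submanifold of $\C^{n+1}$ of CR dimension $n$, so are the open subsets $\Omega_1$ and $\Omega_2$; since $F$ maps $\overline{\Omega}_1$ homeomorphically onto $\overline{\Omega}_2$ it maps $\Omega_1$ diffeomorphically onto $\Omega_2$, and $F|_{\Omega_1}$ is CR, so Proposition~\ref{prop:CRdiffeo} shows $F^{-1}|_{\Omega_2}$ is CR. As $F^{-1}$ is also real-analytic up to the boundary, the same construction yields a connected neighborhood $U_2$ of $\overline{\Omega}_2$ and a holomorphic $H \colon U_2 \to \C^{n+1}$ with $H|_{\overline{\Omega}_2} = F^{-1}$. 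The analytic fact I will use for the gluing is that a holomorphic function on a connected open subset of $\C^{n+1}$ that vanishes on a nonempty relatively open subset of $\C^n \times \R$ must vanish identically: fixing the $\C^n$-variables, a holomorphic function of $w = t + is$ that vanishes on a real interval vanishes on a full disc, so the zero set contains an open subset of $\C^{n+1}$ and the identity theorem applies.

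Finally I glue. Let $V_1$ be the connected component of the open set $\{x \in U_1 : G(x) \in U_2\}$ containing $\overline{\Omega}_1$ (this set does contain $\overline{\Omega}_1$, since $G(\overline{\Omega}_1) = \overline{\Omega}_2 \subset U_2$), and similarly let $V_2$ be the connected component of $\{y \in U_2 : H(y) \in U_1\}$ containing $\overline{\Omega}_2$. On $V_1$ the holomorphic map $H \circ G - \id$ vanishes on the nonempty open set $\Omega_1$ (where $H(G(x)) = F^{-1}(F(x)) = x$), hence vanishes on all of $V_1$; symmetrically $G \circ H = \id$ on $V_2$. Since $G(V_1)$ is connected, is contained in $\{y \in U_2 : H(y) \in U_1\}$ (because $H(G(x)) = x \in U_1$ for $x \in V_1$), and meets $V_2$ (it contains $\overline{\Omega}_2$), it follows that $G(V_1) \subseteq V_2$, and likewise $H(V_2) \subseteq V_1$. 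Together with $H \circ G = \id$ on $V_1$ and $G \circ H = \id$ on $V_2$ this forces $G(V_1) = V_2$, $H(V_2) = V_1$, and that $G|_{V_1} \colon V_1 \to V_2$ is a biholomorphism with inverse $H|_{V_2}$. Taking the neighborhoods in the statement to be $V_1$ and $V_2$ finishes the proof. I expect the only genuinely delicate point to be this last step's bookkeeping with connected components, needed to promote the two separate one-sided complexifications into honest mutual inverses; everything else is a direct appeal to Propositions~\ref{prop:flatCRcomplexify} and~\ref{prop:CRdiffeo}.
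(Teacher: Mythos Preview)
Your proof is correct and follows essentially the same approach as the paper: complexify both $F$ and $F^{-1}$ via Propositions~\ref{prop:flatCRcomplexify} and~\ref{prop:CRdiffeo}, then use analytic continuation from $\Omega_1$ to conclude that the compositions are the identity. The only cosmetic difference is that the paper passes through the intermediate real slice $W_1 = U_1 \cap (\C^n \times \R)$ and then argues from the hypersurface to the ambient neighborhood, whereas you argue directly that a holomorphic function vanishing on the open piece $\Omega_1 \subset \C^n \times \R$ vanishes identically; your component bookkeeping with $V_1$, $V_2$ is slightly more explicit than the paper's final step of simply setting $U_2 = G(U_1)$.
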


\begin{proof}
First, Proposition~\ref{prop:CRdiffeo} says
that $F^{-1}|_{\Omega_2}$ is CR.  We then
apply Proposition~\ref{prop:flatCRcomplexify} to both $F$ and $F^{-1}$.
That is, we complexify $F$ to a neighborhood $\widetilde{U}_1$ of
$\overline{\Omega}_1$ in $\C^{n+1}$ and get
$G \colon \widetilde{U}_1 \to \C^{n+1}$,
where $G|_{\overline{\Omega}_1} = F$.
We complexify $F^{-1}$ to a neighborhood $\widetilde{U}_2$ of
$\overline{\Omega}_2$ in $\C^{n+1}$ and get
$H \colon \widetilde{U}_2 \to \C^{n+1}$,
where $H|_{\overline{\Omega}_2} = F^{-1}$.

Because $\overline{\Omega}_1 \subset G^{-1}(\widetilde{U}_2)$,
we may pick $U_1$ to be a connected neighborhood of
$\overline{\Omega}_1$ in $\C^{n+1}$ such that
$U_1 \subset G^{-1}(\widetilde{U}_2)$ and
such that $U_1 \cap \C^{n} \times \R$
is connected.  From now on we consider $G \colon U_1 \to \C^{n+1}$
to be the restriction to $U_1$.
Let $W_1 = U_1 \cap \C^n \times \R$
and $W_2 = \widetilde{U}_2 \cap \C^n \times \R$.

As $F^{-1} \circ F = Id$, then
$(H|_{W_2} \circ G|_{W_1})|_{\overline{\Omega}_1} = Id$. Since
$W_1$ is connected and $H|_{W_2}$ and $G|_{W_1}$ are real-analytic,
we find $H|_{W_2} \circ G|_{W_1} = (H \circ G)|_{W_1} = Id$.
Because $U_1$ is connected, $W_1$ is a real hypersurface in $U_1$, and
$H$ and $G$ are analytic, we find
$H \circ G = Id$ on $U_1$. The proposition follows by
taking $U_2 = G(U_1)$.
\end{proof}

\begin{prop} \label{prop:imagenotnondegenerate}
Suppose $M \subset \C^n \times \R$, $n \geq 2$,
is a smooth CR submanifold of real dimension $2n$,
and $\widetilde{M} \subset \C^{n+1}$ is a smooth
submanifold of real dimension $2n$ with an $A$-nondegenerate 
CR singularity at
$\widetilde{p} \in \widetilde{M}$.

Then there does not exist a germ at a point $p\in M$ of
a smooth CR map $f \colon M \to \C^{n+1}$ such that
$f(p)=\widetilde{p}$, $f(M) = \widetilde{M}$ (as germs),
and $f$ is a diffeomorphism onto its image.
\end{prop}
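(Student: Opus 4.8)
The plan is to argue by contradiction, assuming such a germ of a CR map $f$ exists, and to derive a contradiction with the $A$-nondegeneracy of $\widetilde M$ at $\widetilde p$. First I would observe that $M$, being a CR submanifold of $\C^n \times \R$ of real dimension $2n$, is either a complex manifold (CR dimension $n$) or has CR dimension $n-1$; in the latter case $M$ is a hypersurface in $\C^n \times \R$, hence Levi-flat, and near $p$ it is foliated by the leaves $\{s = c\}$, each of which is an open piece of a complex $n$-dimensional manifold. In either case, through $p$ there passes a germ of a complex submanifold $L \subset M$ of complex dimension $n$ (if $M$ is itself complex, take $L = M$; if $M$ is Levi-flat, take $L$ to be the leaf through $p$, shrinking so $p$ is not a point where the leaf structure degenerates — and since the CR dimension is constant near a CR point, this is fine).

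Next I would use that $f$ is CR and a diffeomorphism onto its image to transport this complex structure. Because $f$ is CR, its differential sends $T_q^{1,0}M$ into $T_{f(q)}^{1,0}\widetilde M$ for every $q$; restricting to $L$, the differential $df$ sends $T_q L = T_q^c L = T_q^{1,0}L \oplus T_q^{0,1}L$ (real dimension $2n$) injectively into $T_{f(q)}\widetilde M$, and the image is a complex-linear subspace of dimension $2n$. Thus $f(L)$ is a germ at $\widetilde p$ of a real $2n$-dimensional submanifold of $\widetilde M$ whose tangent space at every point is $J$-invariant. By the Newlander–Nirenberg theorem (or simply because $df\circ J = J \circ df$ on $TL$, so the almost complex structure pulled back agrees with the ambient one), $f(L)$ is a complex $n$-dimensional submanifold of $\C^{n+1}$ through $\widetilde p$, contained in $\widetilde M$ near $\widetilde p$.

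Finally I would show that an $A$-nondegenerate CR singular point $\widetilde p$ of $\widetilde M$ cannot lie on any germ of an $n$-dimensional complex submanifold contained in $\widetilde M$. Write $\widetilde M$ near $\widetilde p = 0$ as $w = A(z,\bar z) + B(z,z) + \overline{C(z,z)} + O(\snorm{z}^3)$ with $A$ nondegenerate. A germ of a complex $n$-dimensional submanifold through $0$ contained in $\{w = \rho(z,\bar z)\}$, since it must be tangent to $\{w=0\}$ at $0$ (as $\widetilde M$ is), can be parametrized as a graph $z \mapsto (z, w(z))$ with $w$ holomorphic, $w(0)=0$, $dw(0)=0$; substituting into the defining equation forces the holomorphic function $w(z)$ to equal $A(z,\bar z) + B(z,z) + \overline{C(z,z)} + O(\snorm{z}^3)$ for all small $z$, and comparing the pure/mixed/antiholomorphic parts of the degree-two terms yields $A \equiv 0$ (the Hermitian term $A(z,\bar z)$ cannot be holomorphic in $z$ unless it vanishes), contradicting $A$-nondegeneracy. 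This last step is essentially \cite{crext2}*{Lemma 2.1} in spirit, and I would either cite that lemma or spell out the two-line degree-two comparison.

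The main obstacle I anticipate is the middle step — making rigorous that the $f$-image of the leaf (or of $M$ itself) is genuinely a complex submanifold of the ambient $\C^{n+1}$, i.e.\ handling the case distinction on the CR dimension of $M$ cleanly and invoking Newlander–Nirenberg (or the integrability already present) so that one truly gets an embedded complex $n$-fold inside $\widetilde M$ passing through the CR singular point; once that is in hand, the contradiction with $A$-nondegeneracy is routine.
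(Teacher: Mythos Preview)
Your approach has a genuine gap in the CR dimension $n-1$ case. You claim that $M$, being a hypersurface in $\C^n \times \R$, is Levi-flat and is foliated by the leaves $\{s=c\}$, each an open piece of a complex $n$-manifold. This is false on two counts. First, a $2n$-dimensional CR submanifold of $\C^n\times\R$ with CR dimension $n-1$ has $\dim_{\C} T_p^cM = n-1$, so it \emph{cannot} contain any complex $n$-fold through $p$: any complex submanifold of $M$ has complex dimension at most $n-1$. The sets $M\cap\{s=c\}$ are real $(2n-1)$-dimensional hypersurfaces in $\C^n$, not complex $n$-folds. Second, such an $M$ need not be Levi-flat at all; for instance $M=\{s=z_1+\bar z_1+|z_2|^2\}\subset\C^2\times\R$ is CR of CR dimension $1$ but has nonvanishing Levi form (this is exactly the $M$ in Example~\ref{example:crsingulartwo}). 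So there is no complex $n$-fold $L\subset M$ to push forward, and your third step --- ruling out complex $n$-folds through an $A$-nondegenerate CR singularity --- never gets off the ground. (Even the natural salvage, using complex $(n-1)$-folds, fails: $M$ need not contain those either, and $A$-nondegenerate CR singular $\widetilde M$ \emph{can} contain complex $(n-1)$-folds, e.g.\ $\{z_1=z_2,\,w=0\}$ inside $w=|z_1|^2-|z_2|^2$.)

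The paper's proof takes a different route that does not require any complex leaf in $M$. In the CR dimension $n-1$ case, the function $g(z,s)=s$ is a real-valued CR function on $M$ with nonvanishing differential; pushing it forward by the diffeomorphism gives a real-valued CR function $\varphi=g\circ f^{-1}$ on $\widetilde M_{CR}$ with $d\varphi(\widetilde p)\neq 0$. One then writes down explicit spanning CR vector fields $L_{j,k}$ for $\widetilde M$ in the graph coordinates $\eta=\rho(\xi,\bar\xi)$, applies both $L_{j,k}\varphi=0$ and $\bar L_{j,k}\varphi=0$ (the latter because $\varphi$ is real), and reads off the linear terms: the nondegeneracy of $A$ forces every $\partial\varphi/\partial\xi_j(0)$ and $\partial\varphi/\partial\bar\xi_j(0)$ to vanish, contradicting $d\varphi(\widetilde p)\neq 0$. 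The key object is this real-valued CR function, not a complex leaf.
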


The proposition does not hold for $n = 1$.
See Example~\ref{example:crsingularlocal}.

\begin{proof}
Recall that a smooth CR submanifold of $\C^n \times \R$
of real dimension $2n$ either has CR dimension $n-1$
or is a complex submanifold.
If $M$ is a complex manifold, $f$ is holomorphic with a nonsingular
derivative, and $f(M)$ is an immersed complex manifold. Hence, $f(M)$
is not
a CR singular submanifold.
Therefore, assume that the CR dimension of $M$ is $n-1$.

Suppose there exists such a map (as a germ at $p$).
We may suppose that $p=0$ and $\widetilde{p}=0$, and
we write  $\widetilde{M}$ as  
\begin{equation}
\eta = \rho(\xi,\bar{\xi}) = A(\xi,\bar{\xi}) + B(\xi,\xi) + \overline{C(\xi,\xi)} + O(\snorm{\xi}^3),
\end{equation}
where $A$ is a nondegenerate sesquilinear form and $B$ and $C$
are bilinear forms.

The CR vector fields at CR points of $\widetilde{M}$ are spanned by
\begin{equation}
L_{j,k} = \rho_{\bar{\xi}_k}\partial_{\bar{\xi}_j}-\rho_{\bar{\xi}_j}\partial_{\bar{\xi}_k}+
\big(\rho_{\bar{\xi}_k}\bar{\rho}_{\bar{\xi}_j}-\rho_{\bar{\xi}_j}\bar{\rho}_{\bar{\xi}_k}\big)\partial_{\bar{\eta}}
\end{equation}
for $1\le j,k\le n$.

Let $(z,s) \in \C^n \times \R$ be the coordinates on the source.
As $M$ is CR, and not complex,
$g(z,s) = s$ as a function on $M$ is a real-valued
CR function with nonzero derivative.
Let  $\varphi=g\circ f^{-1}$.
Then, $\varphi$ is a real-valued CR function on $\widetilde{M}_{CR}$ such
that the derivative of $\varphi$ does not
vanish at the origin.

We write $\varphi$ as a function of $\xi$ and $\bar{\xi}$, since $\eta$
is a function of $\xi$ and $\bar{\xi}$.
Then, for CR  vector fields $L$ on $\widetilde{M}_{CR}$,
we have $L\varphi= 0$ and
$\bar{L}\varphi=\overline{L\bar{\varphi}}=\overline{L\varphi}=0$
on $\widetilde{M}_{CR}$.
Write $L=L_{j,k}$ for some $1\le j,k\le n$.  Then
$L\varphi$ and $\bar{L}\varphi$ are functions of
$\xi$ and $\bar{\xi}$ only, and
looking at the linear terms in $\xi$ and $\bar{\xi}$, we find
\begin{equation}
\begin{aligned}
& 0 = 
L \varphi = 
%(\epsilon_k\xi_k + \overline{C_{\xi_k}})
(A_{\bar{\xi}_k} + \overline{C_{\xi_k}})
\frac{\partial \varphi}{\partial {\bar{\xi}_j}}(0,0)
-
(A_{\bar{\xi}_j} + \overline{C_{\xi_j}})
%(\epsilon_j\xi_j + \overline{C_{\xi_j}})
\frac{\partial \varphi}{\partial {\bar{\xi}_k}}(0,0)
+ O(\snorm{\xi}^2),\\
& 0 = 
\bar{L} \varphi = 
(A_{\xi_k} + B_{\xi_k})
%(\epsilon_k\bar{\xi}_k + B_{\xi_k})
\frac{\partial \varphi}{\partial {\xi_j}}(0,0)
-
(A_{\xi_j} + B_{\xi_j})
%(\epsilon_j\bar{\xi}_j + B_{\xi_j})
\frac{\partial \varphi}{\partial {\xi_k}}(0,0)
+ O(\snorm{\xi}^2).
\end{aligned}
\end{equation}
The subscripts on the forms denote derivatives, treating the forms as
functions.
As everything is in terms of $\xi$ and $\bar{\xi}$, and these
parametrize $\widetilde{M}$, then the
above identities are true for all $(\xi,\bar{\xi})$.
The function $A_{\bar{\xi}_k}$ is the $k$th row of $A$
but with the $\ell$th
component multiplied by $\xi_\ell$.
On the other hand,
$\overline{C_{\xi_k}}$ depends only on $\bar{\xi}$.
In the second equation, $A_{\xi_k}$ involves the $k$th column of $A$
and depends on $\bar{\xi}$, whereas $B_{\xi_k}$ depends only on
$\xi$.
We complexify the Taylor polynomial, and assume
$\xi$ and $\bar{\xi}$ are independent variables.
As $A$ is nondegenerate, its rows are independent, and hence
using the first equation we get
$\frac{\partial \varphi}{\partial {\bar{\xi}_j}}(0,0) = 0$ and
$\frac{\partial \varphi}{\partial {\bar{\xi}_k}}(0,0) = 0$.
Similarly, as the columns of $A$ are independent, using the
second equation we find
$\frac{\partial \varphi}{\partial {\xi_j}}(0,0) = 0$ and
$\frac{\partial \varphi}{\partial {\xi_k}}(0,0) = 0$.
Consequently, the derivative of $\varphi$ must vanish at $(0,0)$, which is a contradiction.
\end{proof}

Next we prove the simple result that when a holomorphic map restricted to
a real submanifold is a diffeomorphism onto its image, the map can only raise
the CR dimension at a point.
The proposition holds also for smooth CR maps on CR manifolds;
however, the whole point is to
apply it in a situation where $M$ is CR singular at $p$.

\begin{prop}
Suppose $M \subset U \subset \C^m$ and $N \subset \C^k$,
where $U$ is open and
$M$ and $N$ are embedded real smooth submanifolds.
Suppose $F \colon U \to \C^k$ is holomorphic such that
$F(M) \subset N$ and $F|_{M}$ is a diffeomorphism onto its image.
Then for any $p \in M$
\begin{equation}
\dim T_p^{1,0} M \leq \dim T_{F(p)}^{1,0} N .
\end{equation}
\end{prop}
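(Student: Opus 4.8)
The plan is to run an entirely pointwise argument at $p$, using only two ingredients: holomorphy of $F$ forces the derivative to intertwine the complex structures, and the diffeomorphism hypothesis forces the derivative to be injective on $T_pM$. Concretely, I would fix $p \in M$, set $q = F(p)$, and work with the real derivative $dF_p \colon T_p\C^m \to T_q\C^k$ and its complexification $dF_p^\C$. Since $F$ is holomorphic on the open set $U$, one has $dF_p \circ J = J \circ dF_p$ on the ambient tangent spaces; equivalently $dF_p^\C$ carries $T_p^{1,0}\C^m$ into $T_q^{1,0}\C^k$. Since $F(M) \subseteq N$, differentiating a curve in $M$ through $p$ shows $dF_p(T_pM) \subseteq T_qN$.

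The one step that needs a small check is that $dF_p^\C(T_p^{1,0}M) \subseteq T_q^{1,0}N$. A typical element of $T_p^{1,0}M$ is $X - iJX$ with $X \in T_p^cM = T_pM \cap J(T_pM)$. Then both $X$ and $JX$ lie in $T_pM$, so $dF_p(X) \in T_qN$ and $J\,dF_p(X) = dF_p(JX) \in T_qN$; hence $dF_p(X) \in T_qN \cap J(T_qN) = T_q^cN$, and therefore $dF_p^\C(X - iJX) = dF_p(X) - iJ\,dF_p(X) \in T_q^{1,0}N$. Finally, because $F|_M$ is a diffeomorphism onto its image, $dF_p$ is injective on $T_pM$, so $dF_p^\C$ is injective on $\C \otimes T_pM$ and in particular on $T_p^{1,0}M \subseteq \C \otimes T_p^cM \subseteq \C \otimes T_pM$. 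Combining, $\dim T_p^{1,0}M = \dim dF_p^\C(T_p^{1,0}M) \le \dim T_q^{1,0}N$, which is the claim.

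I do not expect a genuine obstacle here: the argument is local and pointwise, so the fact that $M$ is allowed to be CR singular at $p$ (where $\dim T_p^cM$ can jump up) never enters — nothing about constancy of the CR dimension is used. The only thing to be careful about is bookkeeping: keeping the real derivative, its complexification, and the $J$-intertwining straight, and verifying that the image of a $(1,0)$-vector tangent to $M$ lands not merely in $\C \otimes T_qN$ but in the $(1,0)$ part $T_q^{1,0}N$, which is exactly what holomorphy of $F$ supplies.
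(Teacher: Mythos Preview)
Your proof is correct and follows essentially the same approach as the paper: use holomorphy of $F$ to see that the pushforward sends $T_p^{1,0}M$ into $T_{F(p)}^{1,0}N$, and use the diffeomorphism hypothesis to see that this pushforward is injective. You have simply spelled out in more detail the step the paper compresses into ``and hence $T_p^{1,0}M$ to $T_{F(p)}^{1,0}N$.''
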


\begin{proof}
The map $F$ is holomorphic, so it takes
$T_p^{1,0} \C^m$ to $T_{F(p)}^{1,0} \C^k$, and hence
$T_p^{1,0} M$ to $T_{F(p)}^{1,0} N$. In other words,
the pushforward satisfies $F_* T_p^{1,0} M \subset T_{F(p)}^{1,0} N$.
The map $F$ is a diffeomorphism restricted to
$M$, so the (complexified) derivative is nonsingular
on $T_p^{1,0} M$, and thus
\begin{equation}
\dim T_p^{1,0} M = 
\dim F_* T_p^{1,0} M \leq
\dim T_{F(p)}^{1,0} N .
\end{equation}
\end{proof}

A corollary is the following counterpart of
Proposition~\ref{prop:imagenotnondegenerate}.

\begin{cor} \label{cor:imageofnondegeneratenotCR}
Suppose $M \subset \C^n \times \R$, $n \geq 2$, is a real-analytic
submanifold of dimension $2n$ with an $A$-nondegenerate
CR singularity at $p \in M$, $\widetilde{M} \subset \C^{n+1}$ is a smooth
CR submanifold of dimension $2n$ that has CR dimension $n-1$,
and $\widetilde{p} \in \widetilde{M}$.

Then there does not exist a germ at $p$ of a
real-analytic map $f \colon M \to \C^{n+1}$ such that
$f(p)=\widetilde{p}$, $f$ is CR at CR points of $M$,
$f(M) = \widetilde{M}$ (as germs),
and $f$ is a diffeomorphism onto its image.
\end{cor}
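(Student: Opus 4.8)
The plan is to argue by contradiction. Suppose such an $f$ exists; the goal is to exhibit $f$ as the restriction to $M$ of a holomorphic map $F$ defined on a full neighborhood of $p$ in $\C^{n+1}$, and then to invoke the preceding proposition. Granting such an $F$: we have $F|_M = f$ near $p$, hence $F(M)=f(M)=\widetilde M$ as germs, and $F|_M=f$ is a diffeomorphism onto its image, so the proposition gives
\[
\dim_{\C} T_p^{1,0} M \;\le\; \dim_{\C} T_{F(p)}^{1,0}\widetilde M \;=\; \dim_{\C} T_{\widetilde p}^{1,0}\widetilde M \;=\; n-1 ,
\]
the last equality because $\widetilde M$ has CR dimension $n-1$. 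On the other hand, since $M$ has a CR singularity at $p$ it is tangent there to a complex hyperplane, so the real $2n$-dimensional space $T_p M$ is that complex $n$-plane and $\dim_{\C} T_p^{1,0} M = n$. Thus $n\le n-1$, a contradiction, which proves the corollary.

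It remains to produce the holomorphic extension $F$. First, Proposition~\ref{prop:sizeofCRsing}(b) shows that the CR singular set $\Sigma$ has real dimension at most $n$ near $p$; hence $M\setminus\Sigma$ is open, dense and connected in $M$, and at each of its points $M$ has CR dimension $n-1$, so $M\setminus\Sigma$ is a \emph{generic} real-analytic CR submanifold of $\C^{n+1}$. Because $f$ is real-analytic and CR on $M\setminus\Sigma$, the standard complexification of real-analytic CR functions on generic real-analytic CR manifolds yields, near each point of $M\setminus\Sigma$, a holomorphic extension of $f$; two such extensions agree on the generic set $M\setminus\Sigma$, which is a uniqueness set for holomorphic functions, so they glue to a holomorphic map $F$ on a connected neighborhood $\mathcal V$ of $M\setminus\Sigma$ in $\C^{n+1}$ with $F=f$ on $M\setminus\Sigma$. (Concretely: writing $M$ as $w=\rho(z,\bar z)$ and complexifying $\bar z$ to an independent variable $\zeta$, the CR condition forces the $\zeta$-gradient of the complexified $f$ to be a scalar multiple of $\nabla_\zeta\rho$, and $A$-nondegeneracy makes $\nabla_\zeta\rho$ nonvanishing off a proper analytic subset, so the complexified $f$ descends through $(z,\zeta)\mapsto(z,\rho(z,\zeta))$.)

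The obstacle — which I expect to be the only real work — is that $\mathcal V$ need not contain $p$: at the CR singular point $M$ fails to be generic (equivalently $\nabla_\zeta\rho(p)=0$), and $\mathcal V$ degenerates to a cusp-shaped region near $p$, across whose complement one cannot extend $F$ by any removable-singularity argument since that complement is not thin. To push $F$ holomorphically onto a neighborhood of $p$ one must instead use a genuine one-sided extension with boundary regularity up to the CR singularity: realize (the germ of) $M$ as part of the boundary of a domain $\Omega\subset\C^n\times\R$, extend $f$ to a CR map on $\overline\Omega$ that is real-analytic up to $\partial\Omega$ at every point, \emph{including} $p$ — this is precisely the regularity phenomenon established in the body of the paper (Theorem~\ref{thm:main}) and in \cite{crext2}, and it is here that $A$-nondegeneracy is used in an essential, analytic way — and then complexify this CR map by Proposition~\ref{prop:flatCRcomplexify} to get $F$ holomorphic on a neighborhood of $\overline\Omega$, hence of $p$. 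By continuity $F=f$ on all of $M$ near $p$, and the argument concludes as in the first paragraph.
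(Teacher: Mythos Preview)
Your argument is correct and follows the same two-step outline as the paper: (i) extend $f$ to a holomorphic map $F$ on a full neighborhood of $p$ in $\C^{n+1}$, then (ii) apply the preceding proposition to compare CR dimensions and obtain the contradiction $n\le n-1$. Step (ii) is exactly the paper's argument.

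For step (i), however, you take an unnecessarily circuitous route. The paper simply invokes the \emph{local} extension theorem of \cite{crext2}: since $M$ already sits in $\C^n\times\R$ and has an $A$-nondegenerate CR singularity at $p$, any real-analytic function on $M$ that is CR at CR points extends holomorphically to a neighborhood of $p$ in $\C^{n+1}$. There is no need to embed the germ of $M$ into the boundary of a bounded domain $\Omega$, invoke Theorem~\ref{thm:main} (a global statement), and then re-complexify via Proposition~\ref{prop:flatCRcomplexify}. Your construction of such an $\Omega$ is left unjustified, and while it can be carried out, it is extra work; also, Lemma~\ref{lemma:extension} already produces the holomorphic $G$ on a neighborhood of $\overline\Omega$, so Proposition~\ref{prop:flatCRcomplexify} would be redundant anyway. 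Your first paragraph of the extension discussion (complexifying on the generic part $M\setminus\Sigma$) is correct but also superfluous once you cite \cite{crext2} directly.
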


\begin{proof}
Suppose for a contradiction that $f$ exists.
Using the results of \cite{crext2} we have that
$f$ extends to a holomorphic map on a neighborhood of $p$.
The CR dimension of $M$ at $p$ is $n$, and
the CR dimension of $\widetilde{M}$ at $\widetilde{p}$ is $n-1$,
contradicting the previous proposition.
\end{proof}

The following result shows how we use the assumption
that CR points go to CR points.

\begin{prop} \label{prop:MimmersionatCR}
Suppose $M \subset \C^n \times \R$, $n \geq 1$, is a real-analytic CR
submanifold of dimension $2n$ that is not a complex manifold.
Suppose there exist
a neighborhood $U \subset \C^{n+1}$ of $p \in M$ and a holomorphic
map $G \colon U \to \C^{n+1}$ such that $G|_{M \cap U}$ is an immersion and
$G(p)$ is a CR point of $G(M \cap U)$.
Then $G$ is an immersion in a neighborhood of $p$.
\end{prop}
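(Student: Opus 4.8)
The plan is to argue that the rank of $dG$ can only drop on the CR-singular locus of $M$, and then rule out a rank drop at $p$ using the CR-point hypothesis. Since $M$ has real dimension $2n$ and lives in $\C^{n+1}$ which has real dimension $2n+2$, the map $G$ restricted to $M$ is an immersion exactly where $dG|_{T_qM}$ is injective. The first step is to observe that $G$ being holomorphic and an immersion on $M\cap U$ forces $dG$ to be injective on the complex tangent spaces: $G_*T_q^{1,0}M$ injects into $T_{G(q)}^{1,0}\C^{n+1}$, so $\dim G_*T_q^{c}M = \dim T_q^cM$ for every $q\in M\cap U$. At a CR point $q$ of $M$ (where $M$ is not complex), $T_q^cM$ has complex dimension $n-1$, i.e.\ real dimension $2n-2$, and the full tangent space $T_qM$ has real dimension $2n$; the quotient $T_qM/T_q^cM$ is $2$-real-dimensional.

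Next I would pass to an open neighborhood of $p$ in $\C^{n+1}$ and consider the holomorphic map $G$ as a map of the ambient spaces, computing $\rank_{\C}(dG)_q$ as a holomorphic function of $q$. The crucial point is that $G|_{M\cap U}$ being an \emph{immersion everywhere} on $M\cap U$, combined with $G(p)$ being a CR point of $G(M\cap U)$, must be leveraged to show $dG_p$ has full rank $n+1$ as a complex-linear map on $T_p\C^{n+1}$. Here is where I would use Proposition~\ref{prop:imagenotnondegenerate} and Corollary~\ref{cor:imageofnondegeneratenotCR} in spirit: if $dG_p$ were rank-deficient, then $G(M\cap U)$ near $G(p)$ would be the image of $M$ under a map whose ambient derivative drops rank, and one shows this is incompatible with $G(M\cap U)$ being a smooth CR manifold of the correct CR dimension at the (by hypothesis) CR point $G(p)$ — essentially because the ``missing'' ambient direction would have to be supplied by the complex tangential directions of $M$, forcing the image CR dimension up to $n$, which contradicts $G(p)$ being a CR point (CR dimension $n-1$) of a $2n$-dimensional image. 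For $n=1$ one argues directly using that $M$ has a nowhere-vanishing real CR function (the pushforward of a coordinate), paralleling the argument in the proof of Proposition~\ref{prop:imagenotnondegenerate}.

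Concretely, I would choose coordinates so that $p=0$, $M$ is given near $0$ by $s=\rho(z,\bar z)$ in $(z,s)\in\C^n\times\R$ with $\rho\in O(\snorm{z}^2)$ (as $M$ is CR but not complex, this is a valid local form after rotation), parametrize $M$ by $z$, and write $G$ in these coordinates. The immersion hypothesis at points of $M\cap U$ away from the CR-singular set $\Sigma$ (which by Proposition~\ref{prop:sizeofCRsing} is at most $n$-dimensional when $A$ is nondegenerate, but in general is a proper real-analytic subvariety since $M$ is not complex) shows the complex Jacobian $\det dG$ restricted appropriately does not vanish identically; then the set where $dG$ (as an ambient complex map) is rank-deficient is a proper complex-analytic subvariety of $U$, and I would show it cannot contain $p$ because that would make $G(M\cap U)$ fail to be CR at $G(p)$ of the right dimension, or fail to be an immersion. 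Once $dG_p$ has full complex rank $n+1$, $G$ is a local biholomorphism near $p$, hence in particular an immersion on a neighborhood of $p$, which is the conclusion.

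**Main obstacle.** The hard part will be the precise argument that a rank drop of $dG_p$ as an ambient holomorphic map is incompatible with $G(M\cap U)$ being a smooth CR manifold of CR dimension $n-1$ at the CR point $G(p)$ while $G|_{M\cap U}$ remains an immersion there. One must track carefully how the complex tangent directions $T_p^{1,0}M$ map under $dG_p$: the image CR structure at $G(p)$ is controlled by $G_*T_p^{1,0}M$ together with the normal contributions, and showing that a kernel vector of $dG_p$ (necessarily transverse to $T_pM$, since $G|_M$ is an immersion) would force the image to pick up an extra CR direction — or else collapse so as not to be CR at all — requires a genuine linear-algebra-plus-CR-geometry argument rather than a formality. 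For $n\ge 2$ this should follow cleanly from the proposition and corollary already proved; isolating exactly the $n=1$ variant (where those fail) and checking the hypothesis ``$G(p)$ a CR point'' does the needed work is the delicate remaining point.
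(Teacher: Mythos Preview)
Your setup contains a genuine error, and the overall route is far more circuitous than needed.

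The error: you propose to write $M$ near $p=0$ as $s=\rho(z,\bar z)$ with $\rho\in O(\snorm{z}^2)$, justifying this by ``$M$ is CR but not complex.'' That is exactly backwards. The condition $\rho\in O(\snorm{z}^2)$ says $M$ is tangent to the leaf $\{s=0\}$ at $p$, so $\dim_\C T_p^cM = n$ and $p$ would be a CR \emph{singular} point. The hypothesis, however, is that $M$ is a CR submanifold (hence of CR dimension $n-1$ everywhere, since it is not complex); thus $p$ is a CR point and no such normal form is available. For the same reason your appeals to a ``CR-singular set $\Sigma\subset M$'' and to Proposition~\ref{prop:sizeofCRsing} are vacuous: $\Sigma=\varnothing$ here. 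Finally, the ``hard part'' you flag --- why a rank drop of $dG_p$ is incompatible with $G(p)$ being a CR point of the image --- is never actually carried out; the proposal stops precisely where the real work would begin.

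The paper's proof is a short pointwise linear-algebra argument with no coordinates, no analytic-variety considerations, no appeal to Proposition~\ref{prop:imagenotnondegenerate} or Corollary~\ref{cor:imageofnondegeneratenotCR}, and no $n=1$ case split. Since $M$ is CR of dimension $2n$ and not complex, $M$ is \emph{generic} in $\C^{n+1}$, i.e.\ $T_pM+JT_pM=T_p\C^{n+1}$, so $T_pM$ contains a maximally totally-real subspace $V$ of real dimension $n+1$. The immersion hypothesis makes $dG_p|_V$ injective. Because $G(p)$ is a CR point, the image is also generic there, so $T^c_{G(p)}G(M\cap U)$ has complex dimension $n-1$; since $dG_p$ is $\C$-linear and injective on $T_pM$, it carries $T^c_pM$ isomorphically onto $T^c_{G(p)}G(M\cap U)$. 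One then checks that $dG_p(V)$ is again maximally totally real, whence $dG_p$ is invertible on $T_p\C^{n+1}=V\oplus JV$. Your ``extra CR direction'' intuition is essentially the contrapositive of this same fact, but the totally-real-subspace formulation delivers it directly.
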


\begin{proof}
The derivative of a holomorphic map at $p$
is determined by a linear map on a maximally totally-real
subspace $V \subset T_p \C^{n+1}$.  Because
$M$ is of codimension two in $\C^{n+1}$ and not complex,
$M$ is generic, that is,
$T_p M + J T_p M = T_p \C^{n+1}$.
Therefore, there exists
such a maximally totally-real $V \subset T_p M$.
As $G|_{M \cap U}$ is an immersion at $p$, its derivative
is nonsingular on $V$.
Also, $G(M \cap U)$ is generic near $G(p)$ since $G(p)$ is a
CR point of $G(M \cap U)$.
Therefore, the derivative of $G$ is an invertible
map from $T^c_p M$ to $T^c_{G(p)} G(M\cap U)$ that preserves the
complex structure.
It follows that the pushforward of $V$ is a maximally totally-real
subspace of $T_{G(p)} \C^{n+1}$. Hence,
the derivative of $G$ at $p$ is nonsingular.
\end{proof}

\section{Examples} \label{section:examples}

Let us discuss some examples to see why the conditions in the theorems are
necessary.
In all of the following examples, as in the above results,
$M \subset \C^n \times \R$
will be the
source manifold in coordinates $(z,s)$,
either CR singular or not, $f \colon M \to \C^{n+1}$
will be a restriction of a holomorphic map $F$
(a map depending on $z$, $s$, but
not $\bar{z}$), and $f$ will be a diffeomorphism onto its image $f(M)$.
When we talk about a domain $\Omega \subset \C^{n} \times \R$ then 
$M=\partial \Omega$.

First, in the case $n=1$, in contrast to the higher-dimensional
case, the local geometry near an $A$-nondegenerate CR singular
point does not
force $F$ to be an immersion.

\begin{example} \label{example:nonimmersionlocal}
Let $M \subset \C \times \R$ be given by
\begin{equation}
s = \sabs{z}^2 ,
\end{equation}
and take $F(z,s) = (z,zs+s^2)$.  Then the origin is an elliptic CR singularity,
and the map $F$ restricted to $M$ is a diffeomorphism,
but $F$ is a finite map, not an immersion.
The map $F(z,s) = (z,zs)$ has the same properties except
it is not even finite.
\end{example}

An interesting question is whether a global example as above exists.
That is, do there
exist a bounded domain with real analytic boundary $\Omega \subset \C \times \R$,
where $\partial \Omega$ has only  elliptic CR singularities, and
a CR map $F \colon \overline{\Omega} \to \C^2$,
such that $F|_{\partial \Omega}$ is a diffeomorphism
onto its image, but such that $F$ is not an immersion?

A further complication is that
if $n=1$, then the image of a CR submanifold can be a
nondegenerate ($A$- and $Q$-nondegenerate in our sense and in the Bishop sense)
CR singular submanifold.

\begin{example} \label{example:crsingularlocal}
Take the CR (totally-real)
$M \subset \C \times \R$  given by
\begin{equation}
\Im z = 0 .
\end{equation}
The image $f(M)$ under $(z,s) \overset{f}{\mapsto} (z+is,s^2+z^2)$
is the manifold in the
variables $(\xi,\eta) \in \C^2$ defined by
\begin{equation}
\eta = \sabs{\xi}^2 ,
\end{equation}
which is an elliptic CR singular Bishop surface,
$A$- and $Q$-nondegenerate in our sense.

In fact, any function on a totally-real
CR submanifold is trivially CR, so
 any codimension-two submanifold of $\C^2$ is locally an image
of a totally-real submanifold via a CR embedding.
\end{example}

Proposition~\ref{prop:imagenotnondegenerate}
says there is no example as above for $n \geq 2$
and an $A$-nondegenerate image.
However, if we do not assume $A$-nondegeneracy,
it is possible for a CR point
to go to a CR singular point even when $n \geq 2$.

\begin{example} \label{example:crsingulartwo}
Take the CR manifold $M$ in $\C^2 \times \R$ defined by
\begin{equation}
s = z_1 + \bar{z}_1 + \sabs{z_2}^2.
\end{equation}
The CR vector field is
\begin{equation}
X =
z_2\frac{\partial}{\partial \bar{z}_1} - 
\frac{\partial}{\partial \bar{z}_2} .
\end{equation}
Let $(z,w)$ be the coordinates of $\C^2 \times \C$ and
think of $s$ as the real part of $w = s + it$ to make $M$ a codimension 2
submanifold of
$\C^3$.
The Levi-form of $M$ (in $\C^3$) is represented by
two $3 \times 3$ matrices, where one of them is identically zero.
The other matrix representing the Levi-form is
\begin{equation}
L = 
\begin{bmatrix}
0 & 0 & 0 \\
0 & 1 & 0 \\
0 & 0 & 0
\end{bmatrix}
\end{equation}
and $X^* L X = 1$ for the CR vector field above.
Therefore, the manifold $M$ is not Levi-flat (the Levi-form is nonzero),
and the Levi-flat hypersurface
$\{ t=0 \}$ that $M$ lies in is the unique such hypersurface at each point.
The manifold 
therefore has a real codimension-one foliation by the CR orbits.

The image $f(M)$ under $(z,s) \overset{f}{\mapsto} (z,s^2+is^3)$
is the manifold in the
variables $(\xi,\sigma+i\tau) \in \C^3$ defined by 
\begin{equation}
\sigma^3=\tau^2 \qquad \text{and} \qquad \tau = (\xi_1+\bar{\xi}_1+\sabs{\xi_2}^2) \sigma ,
\end{equation}
or equivalently
\begin{equation}
\sigma = (\xi_1+\bar{\xi}_1+\sabs{\xi_2}^2)^2
\qquad \text{and} \qquad
\tau = (\xi_1+\bar{\xi}_1+\sabs{\xi_2}^2)^3,
\end{equation}
which is a CR singular surface.
The map is a diffeomorphism from $M$ onto $f(M)$:
if we parametrize
$M$ by $z$ and $f(M)$ by $\xi$, $f$ is the identity.
The CR dimension of $f(M)$ outside of the CR singularity is 1, the same
as $M$.
As the map is CR and a diffeomorphism,
and the CR dimensions are the same,
by Proposition \ref{prop:CRdiffeo} the map is
a CR diffeomorphism when restricted
to the preimage of CR points of $f(M)$.
Since $M$ is not Levi-flat, $f(M)$ is not Levi-flat at CR points,
and the Levi-flat hypersurface $\{ \sigma^3=\tau^2 \}$ is
the unique Levi-flat hypersurface that contains $f(M)$.

Proposition~\ref{prop:imagenotnondegenerate} says that the image $f(M)$ is
an $A$-degenerate CR singular manifold.

Therefore:
\begin{enumerate}[1)]
\item The image of a CR point can be CR singular,
      although not $A$-nondegenerate.
\item The Levi-flat hypersurface containing $f(M)$ at such a point
      can be singular.
\end{enumerate}
\end{example}

Let us therefore move to a CR singular manifold and look at its image.
Even if the source is an $A$- or $Q$-nondegenerate CR singular manifold,
the image 
may in fact be degenerate in both senses.

\begin{example} \label{example:imagedegenerate}
Define $M$ in $\C^n \times \R$, $n \geq 1$, by
$s = \norm{z}^2$.  Take $f$ defined by
 $(z,s) \mapsto (z,s^2)$.
The image $f(M)$ in $\C^n \times \R$ with coordinates $(\xi,\sigma)$ is
\begin{equation}
\sigma = \norm{\xi}^4 ,
\end{equation}
which is degenerate in every sense.
\end{example}

The extended mapping $F$ of the above example is not an immersion at the CR
singularity; however, the image $f(M)$ still lies in the nonsingular
Levi-flat hypersurface given by $\Im \sigma = 0$.  It is possible
to construct an example where $f(M)$ does not
lie in any nonsingular Levi-flat.
In the next example we define
a CR singular manifold $M$ with an elliptic CR singularity
at the origin
and a real-analytic embedding $f$ that is CR at CR points such that $f$ 
maps the origin to the origin and 
the image $f(M)$ does not lie in a nonsingular Levi-flat
real-analytic hypersurface in any neighborhood of the origin.
The image $f(M)$ has an $A$- and $Q$-degenerate CR singularity.

\begin{example} \label{example:cusp}
Take the CR singular elliptic manifold $M$ in $\C^n \times \R$, $n \geq 1$, defined by
\begin{equation}
s = \norm{z}^2 .
\end{equation}
The image $f(M)$ under $(z,s) \overset{f}{\mapsto} (z,s^2+is^3)$
is the manifold in the
variables $(\xi,\sigma+i\tau)\in\C^{n+1}$ defined by 
\begin{equation}
\sigma^3=\tau^2 \qquad \text{and} \qquad \tau = (\norm{\xi}^2) \sigma ,
\end{equation}
or in other words
\begin{equation}
\sigma = \norm{\xi}^4
\qquad \text{and} \qquad
\tau = \norm{\xi}^6 ,
\end{equation}
which is $A$- and $Q$-degenerate.  The singular Levi-flat hypersurface
$\{ \sigma^3=\tau^2 \}$ is the
unique Levi-flat that contains $f(M)$, and it is singular at the origin.
The hypersurface on one side of $f(M)$ is
a $C^1$ manifold with boundary at the origin, namely
$\tau = \sigma^{3/2}$ (for $\sigma \geq 0$), but it is not a $C^2$ submanifold.
See Figure~\ref{fig:cusp}.

\begin{figure}[ht!]
\includegraphics[scale=1]{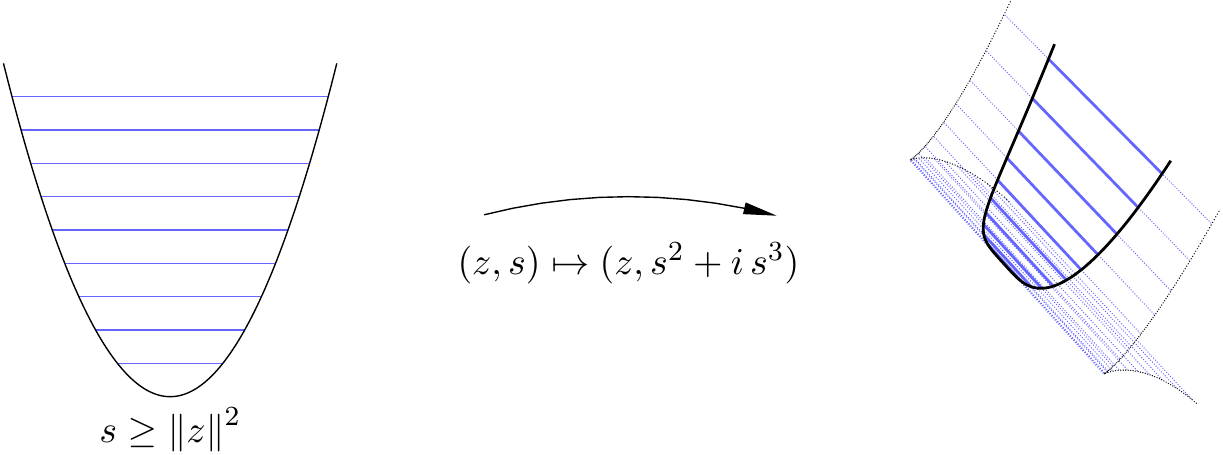}
\caption{Diagram of Example~\ref{example:cusp}.\label{fig:cusp}}
\end{figure}

The submanifold $f(M)$ is also an example of the necessity of 
nondegeneracy in a flattening result such as the theorem of
Fang--Huang.
\end{example}

Next let us focus on the injectivity of the extension $F$.
A map from
$\overline{\Omega} \subset \C^n \times \R$, $n \geq 1$, 
may not be one-to-one even if the map is
a diffeomorphism on the boundary and
the image lies in $\C^n \times \R$, if we allow the image to have further
CR singularities.

\begin{example}\label{eg:BallSingSq}
Let 
$\Omega \subset \C^n \times \R$, $n \geq 1$, be the slightly shifted unit
ball
\begin{equation}
\snorm{z}^2 + (s+\epsilon)^2 < 1 ,
\end{equation}
for some small $\epsilon > 0$.
Take $f \colon \partial \Omega \to \C^{n} \times \R$ defined by
$(z,s) \mapsto (z,s^2)$.
The image in $\C^{n} \times \R$ with coordinates $(\xi,\sigma)$ is
the manifold given by
\begin{equation}
4 \epsilon^2 \sigma = (1-\epsilon^2-\norm{\xi}^2-\sigma)^2 .
\end{equation}
The map $f$ is a diffeomorphism, and
the two points where $\xi=0$ and
$4 \epsilon^2 \sigma = (1-\epsilon^2-\sigma)^2$ are
elliptic CR singularities
of $f(\partial \Omega)$.
However, all the points where $\sigma=0$ and $\norm{\xi}^2=1-\epsilon^2$
are also CR singularities of $f(\partial \Omega)$.
The extended map $F \colon \overline{\Omega} \to \C^n \times \R$
is not one-to-one, and the boundary of $F(\overline{\Omega})$ is not
the image $f(\partial \Omega)$.  See Figure~\ref{fig:BallSingSq}.

\begin{figure}[ht!]
\includegraphics[scale=1]{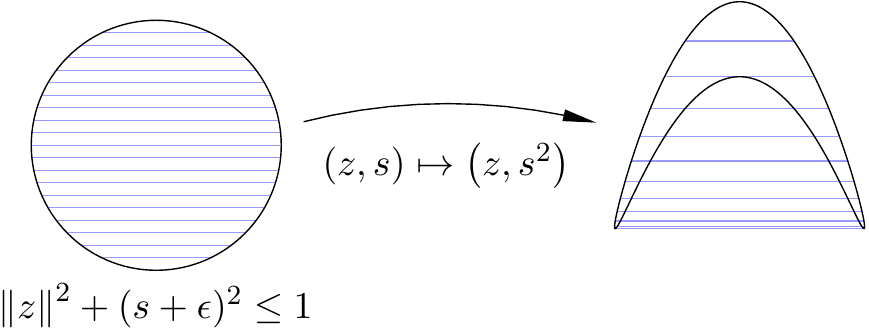}
\caption{Diagram for mapping in Example~\ref{eg:BallSingSq}.\label{fig:BallSingSq}}
\end{figure}
\end{example}

If the image does not necessarily lie in $\C^n \times \R$, then injectivity
may fail inside $\Omega$
even if the image $f(\partial \Omega) = F(\partial \Omega)$ has only
elliptic singularities.

\begin{example}\label{eg:BallSingCubic}
Let 
$\Omega \subset \C^n \times \R$, $n \geq 1$, be the slightly shifted unit
ball
\begin{equation}
\snorm{z}^2 + (s+\epsilon)^2 < 1 ,
\end{equation}
for some small $\epsilon > 0$.
There exists a CR map $F \colon \overline{\Omega} \to \C^{n+1}$ 
such that the
boundary $\partial \Omega$ goes to a CR singular compact manifold diffeomorphically,
$F(\partial \Omega)$ has only elliptic CR singularities, $F$ takes
CR points of $\partial \Omega$ to CR points of $F(\partial \Omega)$,
$F$ extends past the boundary near all points, yet $F$ is not one-to-one on
$\Omega$.

We define $F \colon \overline{\Omega} \to \C^n \times \C$ by
\begin{equation}
(z,s) \mapsto \bigl(z,1-4s^2 +i(8s^3-2s)\bigr) .
\end{equation}
Let $(\xi,\sigma+i\tau) \in \C^n \times \C$ be the coordinates.  The image in the
$\sigma\tau$-plane is a self intersecting curve for $s=\pm \frac{1}{2}$ given by
$\tau^2=\sigma^2(1-\sigma)$. Note that
$F({\partial \Omega})$ does not intersect itself. However,
$F(\Omega)$ does intersect itself when $s = \pm \frac{1}{2}$, so $F(\Omega)$ is
singular.  See Figure~\ref{fig:BallSingCubic}.

\begin{figure}[ht!]
\includegraphics[scale=1]{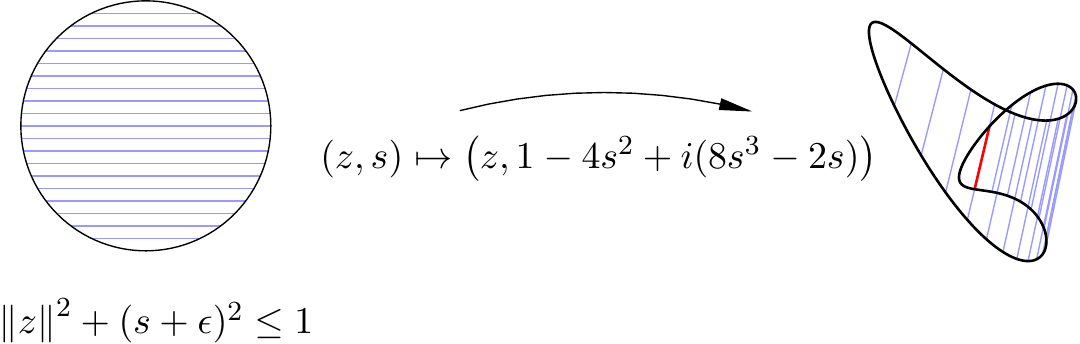}
\caption{Diagram of Example~\ref{eg:BallSingCubic}.\label{fig:BallSingCubic}}
\end{figure}
\end{example}

Finally, we wish to note that not every
$\widetilde{M} \subset \C^{n+1}$ that is
locally a boundary of a Levi-flat hypersurface is, globally, a boundary of
a nonsingular Levi-flat hypersurface with boundary.
In fact, not every singular Levi-flat hypersurface is an image
of a domain in $\C^n \times \R$.

\begin{example}
A Levi-flat singular hypersurface with smooth real-analytic boundary and
isolated CR singular points need not be an image of a domain with boundary
in $\C^n \times \R$.

Let $(\xi,\eta)$ be the coordinates on $\C^2$.  Fix a small $\epsilon > 0$.
Define $H \subset \C^2$ by
\begin{equation} \label{eq:defsingHwithbound}
\Im (\xi^2+\eta^2) = 0, \qquad \sabs{\xi}^2+\sabs{\eta+\epsilon}^2 \leq 1 .
\end{equation}
This hypersurface has an isolated singularity
at the origin.  The boundary $\partial H$ (by which we mean the set where
the inequality in \eqref{eq:defsingHwithbound} is an equality)
is a real submanifold with
isolated CR singularities
(this is the reason for the $\epsilon$), at
\begin{equation}
\Bigl(0,-\epsilon\pm 1\Bigr), \quad
\Bigl(0,\pm i\sqrt{1-\epsilon^2}\Bigr), \quad
\Bigl(\pm i\sqrt{1-\frac{\epsilon^2}{4}},\frac{-\epsilon}{2}\Bigr).
\end{equation}
Through the origin, $H$ contains two ``leaves,'' one given by
$\xi-i\eta=0$, and one given by $\xi+i\eta=0$.

However, $H$ is not an image of a domain in $\C \times \R$:
Suppose it is, that is, suppose 
there exists an open set $U \subset \C \times \R$
and a real-analytic mapping $F \colon U \to H$ that is
holomorphic in the first variable and such that $F(0,0) = (0,0)$,
and such that the image $W \cap H \subset F(U)$ for some neighborhood
$W$ of $0$.
Write $F=(F_1,F_2)$.
For all $(z,s) \in U$,
\begin{equation}
0 = -2i \Im \bigl(F_1(z,s)^2+F_2(z,s)^2\bigr) =
F_1(z,s)^2+F_2(z,s)^2 -
\bar{F}_1(\bar{z},s)^2-\bar{F}_2(\bar{z},s)^2 .
\end{equation}
%We complexify and treat $\bar{z}$ separately.
For each fixed $s$, the map
$z \mapsto F_1(z,s)^2+F_2(z,s)^2$ must be constant,
so each leaf of $\C \times \R$ is mapped to a leaf of $H$.
Suppose without loss of generality that $z \mapsto F(z,0)$ is not
constantly zero and maps to the leaf
$\xi - i\eta = 0$.
Then $z \mapsto F(z,s)$ is not constant 
for all small enough $s$.
For small enough nonzero $s$,
we may also assume that $z \mapsto F(z,s)$
does not map into either one of the two leaves of $H$ through zero.
Consider $z$ is in some small fixed disc $\Delta$ around the origin.
Then $z \mapsto F_1(z,s)+iF_2(z,s)$ has no zero in $\Delta$
for small nonzero $s$ since the 
leaf given by $\xi + i\eta = 0$ only intersects
the leaf given by $\xi-i\eta = 0$.
Via Hurwitz, this means that either
$F_1(z,0)+iF_2(z,0) \equiv 0$, or $F_1(z,0)+iF_2(z,0)$ is never zero
in $\Delta$.  But as the two leaves $\xi + i\eta = 0$ and
$\xi-i\eta = 0$ intersect at the origin, 
$F_1(z,0)+iF_2(z,0)$ has an isolated zero, a contradiction.
\end{example}

\section{Regularity away from CR singularities} \label{section:proofofmain}

In this section we will prove Theorem~\ref{thm:main}.
For simplicity we will sometimes use the following notation.  If $X
\subset \C^n \times \R$, then for any fixed $s \in \R$ we define
\begin{equation}
X_{(s)} = \{ z \in \C^n : (z,s) \in X \} ,
\end{equation}
and we informally refer to $X_{(s)}$ as the \emph{$s$-leaf} of $X$.
Note that as defined $X_{(s)}$ is a subset of $\C^n$.  When it is
necessary to refer to the space $\C^n \times \R$, we
will say that a point $z \in X_{(s)}$ corresponds to the point $(z,s) \in
\C^n \times \R$.

\begin{lemma} \label{lemma:extension}
Let $\Omega \subset \C^n \times \R$, $n \geq 2$, be a bounded domain with
connected real-analytic boundary such that $\partial \Omega$ has only
$A$-nondegenerate 
CR singularities,
and let $f\colon \partial \Omega \to \C^{n+1}$ be a
real-analytic embedding that is CR at CR points of $\partial \Omega$.
Then a real-analytic CR extension $F$ of $f$ to $\overline{\Omega}$ exists;
in fact, there exist an open set $\Upsilon \subset \C^{n+1}$
such that $\overline{\Omega} \subset \Upsilon$ and 
a holomorphic map $G \colon \Upsilon \to \C^{n+1}$ such that
$G|_{\partial \Omega} = f$.
\end{lemma}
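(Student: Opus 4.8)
The plan is to reduce the existence of $F$ to the extension theorem of \cite{crext2} and then to produce the holomorphic extension $G$ on a full neighborhood of $\overline{\Omega}$ by complexifying $F$ componentwise with Proposition~\ref{prop:flatCRcomplexify}. First I would write $f = (f_1, \dots, f_{n+1})$ with each $f_j \colon \partial \Omega \to \C$; since $f$ is CR at the CR points of $\partial \Omega$, each $f_j$ is a real-analytic CR function on $\partial \Omega \setminus \Sigma$. The hypotheses here --- $\Omega \subset \C^n \times \R$ with $n \geq 2$, bounded, with connected real-analytic boundary carrying only $A$-nondegenerate CR singularities --- are precisely those of the main extension result of \cite{crext2}, which asserts that such an $f_j$ extends to a real-analytic CR function on $\overline{\Omega}$. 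Applying this to each component and setting $F = (F_1, \dots, F_{n+1})$ gives a real-analytic CR map $F \colon \overline{\Omega} \to \C^{n+1}$ with $F|_{\partial \Omega} = f$.

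For the neighborhood statement I would apply Proposition~\ref{prop:flatCRcomplexify} to each $F_j$: since $\partial \Omega$ is real-analytic and $F_j$ is real-analytic on $\overline{\Omega}$ with $F_j|_{\Omega}$ CR, there are a domain $U_j \subset \C^n \times \C = \C^{n+1}$ containing $\overline{\Omega}$ and a holomorphic $G_j \colon U_j \to \C$ with $G_j|_{\overline{\Omega}} = F_j$. Taking $\Upsilon$ to be the connected component of $U_1 \cap \dots \cap U_{n+1}$ that contains the connected set $\overline{\Omega}$ --- an open neighborhood of $\overline{\Omega}$ in $\C^{n+1}$ --- and $G = (G_1, \dots, G_{n+1}) \colon \Upsilon \to \C^{n+1}$, one gets a holomorphic $G$ with $G|_{\partial \Omega} = F|_{\partial \Omega} = f$ and $F = G|_{\overline{\Omega}}$, which is exactly the assertion.

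The real content is external: it sits in the extension theorem of \cite{crext2}, and that is the step I expect to be the main obstacle if one wanted a self-contained argument. Its proof slices $\Omega$ by the leaves $\{s = c\}$, on each of which $f$ restricts to a real-analytic CR function on a real hypersurface in $\C^n$, $n \geq 2$, bounding the leaf domain $\Omega_{(c)}$; each such restriction extends holomorphically into the leaf by Hartogs--Bochner, and one must then show that the leafwise extensions glue into a single function that is real-analytic up to the boundary. The delicate points are the critical leaves and the behavior near the CR singular set $\Sigma$ --- exactly where $A$-nondegeneracy enters, via Proposition~\ref{prop:sizeofCRsing} controlling the size and structure of $\Sigma$. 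I would not reprove this, but would take some care to confirm that the extension furnished by \cite{crext2} is real-analytic up to $\partial \Omega$ (not merely continuous there), since that is what makes Proposition~\ref{prop:flatCRcomplexify} applicable.
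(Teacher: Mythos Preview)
Your proposal is correct and matches the paper's approach: the paper simply observes that the statement is exactly Corollary~1.3 of \cite{crext2}, so the holomorphic $G$ on a neighborhood $\Upsilon$ is already furnished there, whereas you recover $G$ from $F$ via Proposition~\ref{prop:flatCRcomplexify}. The extra complexification step is harmless but unnecessary, since \cite{crext2} delivers the ambient holomorphic extension directly.
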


\begin{proof}
This is exactly Corollary 1.3 in \cite{crext2}.
\end{proof}

\begin{prop} \label{prop:flatCRsingsonleafs}
Let $\Omega \subset \C^n \times \R$, $n \geq 1$, be a bounded domain with
real-analytic boundary,
and let $\Sigma \subset \partial \Omega$ be the set
of CR singularities.
Let $(z,s) \in \C^n \times \R$ be the coordinates, and let $\pi_{\R} \colon
\C^n \times \R \to \R$ be the projection to the $s$ coordinate.
Then $\pi_{\R}(\Sigma)$ is a finite set.
\end{prop}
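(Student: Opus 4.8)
The plan is to reinterpret $\Sigma$ as the critical set of a single real-analytic function and then extract finiteness of $\pi_{\R}(\Sigma)$ from the tameness of that set. Recall that $\partial \Omega$ is a compact real-analytic manifold of real dimension $2n$, being the boundary of a bounded domain with real-analytic boundary, and put $g := \pi_{\R}|_{\partial \Omega}$, a real-analytic function on $\partial \Omega$.

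The first step is to observe that $\Sigma$ is exactly the set of critical points of $g$. Indeed, $p \in \partial \Omega$ is a CR singularity if and only if $\partial \Omega$ is tangent at $p$ to the leaf $\{ s = \pi_{\R}(p) \}$, that is, $T_p \partial \Omega \subseteq \ker d\pi_{\R} = \C^n \times \{ 0 \}$; since $T_p \partial \Omega$ and $\C^n \times \{ 0 \}$ both have real dimension $2n$, this inclusion holds precisely when $dg_p = 0$. (In passing this shows directly that $\Sigma$ is a real-analytic subvariety of $\partial \Omega$, as in Proposition~\ref{prop:sizeofCRsing}.) Consequently $\pi_{\R}(\Sigma) = g(\Sigma)$ is the set of critical values of $g$.

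It then remains to show that a real-analytic function on a compact real-analytic manifold has only finitely many critical values. One clean route: $\Sigma$ is a compact semianalytic set, so it admits a finite stratification into connected real-analytic submanifolds; since $dg$ vanishes identically on $\Sigma$, the function $g$ is constant along each stratum, and the frontier condition forces the common value to agree across strata lying in one connected component, so $g$ is constant on each connected component of $\Sigma$; as $\Sigma$ is compact semianalytic it has finitely many components, whence $g(\Sigma)$ is finite. An alternative endgame uses Sard's theorem: $g(\Sigma)$ has Lebesgue measure zero, while, being the image of the compact subanalytic set $\Sigma$ under the real-analytic map $g$, it is a bounded subanalytic subset of $\R$, hence a finite union of points and intervals --- and measure zero eliminates the intervals.

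The only substantive point is this last step. There is no soft reason for the critical values to be finite; for merely $C^\infty$ functions the analogue fails (Whitney's classical example), so one must genuinely use the real-analytic structure, either through stratification and the frontier condition, or through subanalyticity of images combined with Sard. Everything else, namely the compactness of $\partial \Omega$ and the linear-algebra identification of $\Sigma$ with $\{ dg = 0 \}$, is routine.
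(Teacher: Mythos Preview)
Your proof is correct and follows essentially the same strategy as the paper's: identify $\Sigma$ with the critical set of $g = \pi_{\R}|_{\partial\Omega}$, argue that $g$ is constant on each connected component of $\Sigma$, and invoke the finiteness of components of a compact real-analytic subvariety. The paper carries out the middle step by a direct path argument---connecting two points of a component by a piecewise-smooth path inside $\Sigma$, writing $\partial\Omega$ locally as a graph $s = \rho(z,\bar z)$, and noting that $s = \rho\circ\gamma$ has zero derivative along the path since $\nabla\rho = 0$ on $\Sigma$---whereas you invoke stratification and the frontier condition (or, alternatively, Sard plus subanalyticity of images). Your packaging is slightly more abstract but the content is the same; the paper's version has the virtue of being entirely elementary and self-contained, while your Sard-plus-subanalyticity alternative is a genuinely different endgame that trades the component-counting for a measure-zero argument.
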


\begin{proof}
The set $\Sigma$ is a real-analytic subvariety.  It is precisely the
set where $\partial \Omega$ is tangent to a leaf of $\C^n \times \R$,
that is, a set $\{ (z,s) : s = s_0 \}$.
If $p$ and $q$ are two points on
a connected component of $\Sigma$, then there exists a 
piecewise-smooth path $\gamma \colon [0,1] \to \Sigma$ such that
$\gamma(0) = p$ and $\gamma(1) = q$.
Near a CR singularity, $\partial \Omega$ is a graph $s=\rho(z,\bar{z})$.
Parametrize $\partial \Omega$ locally by $z$ and suppose $\gamma$
is a function valued in the $z$-space. Then $s$ on the path is given by
$\rho \circ \gamma$; but $\nabla \rho = 0$ at CR singularities,
so $s$ is constant on $\gamma$.
Now use the fact that a compact real-analytic subvariety has finitely many topological components.
\end{proof}

\begin{lemma} \label{lemma:FimmersioninOmega}
Let $\Omega \subset \C^n \times \R$, $n \geq 2$, be a bounded domain with
connected real-analytic boundary such that $\partial \Omega$ has only
$A$-nondegenerate 
CR singularities,
and let $f\colon \partial \Omega \to \C^{n+1}$ be a
real-analytic embedding that is CR at CR points of $\partial \Omega$.

If in addition $f$
takes CR points of $\partial \Omega$ to CR points of $f(\partial \Omega)$,
then the extension $F$ from Lemma~\ref{lemma:extension}
is an immersion on ${\overline{\Omega} \setminus \Sigma}$.
Here $\Sigma \subset \partial \Omega$ is the set of CR singularities
of $\partial \Omega$. 
\end{lemma}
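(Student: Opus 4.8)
The plan is to show that $F$ (equivalently its holomorphic extension $G \colon \Upsilon \to \C^{n+1}$) has nonsingular derivative at every point of $\overline{\Omega} \setminus \Sigma$, splitting into two cases: points of $\partial \Omega \setminus \Sigma$ and interior points of $\Omega$. For a boundary point $p \in \partial \Omega \setminus \Sigma$, the point $p$ is a CR point of $\partial \Omega$, so $\partial \Omega$ is a generic submanifold of $\C^{n+1}$ near $p$ of CR dimension $n-1$; since $f$ is CR and an embedding and takes $p$ to a CR point of $f(\partial \Omega)$, Proposition~\ref{prop:MimmersionatCR} applies directly and gives that $G$ is an immersion in a neighborhood of $p$. (One must first note that $f$ being an embedding means $f|_{\partial \Omega}$ is an immersion, which is the hypothesis of Proposition~\ref{prop:MimmersionatCR}.) So the boundary case is essentially immediate from the preliminaries.

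The interior case is the substantive one. Fix $p=(z_0,s_0) \in \Omega$; I want $G$ to be an immersion near $p$. The key structural fact is that $G$ respects the leaf structure: for each fixed $s$, the map $z \mapsto G(z,s)$ is holomorphic, and on $\partial \Omega$ it equals $f$, which by hypothesis carries CR points of $\partial \Omega$ to CR points of $f(\partial\Omega)$ lying in the Levi-flat hypersurface $f(\partial\Omega)$ spans. The idea is that each leaf $\overline{\Omega}_{(s)}$ (a bounded domain in $\C^n$ with real-analytic boundary for all but finitely many $s$, by Proposition~\ref{prop:flatCRsingsonleafs}) is mapped by $G(\cdot,s)$ onto a complex hypersurface — a leaf of the Levi-flat solution — and that this family of complex hypersurfaces foliates, so the extra real parameter $s$ maps to a transverse real parameter. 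Concretely: (i) argue that $G(\cdot,s)$ is a biholomorphism of $\Omega_{(s)}$ onto its image for each noncritical $s$, using that $f|_{\partial\Omega_{(s)}}$ is an embedding of the boundary of a domain in $\C^n$ into a complex hypersurface and invoking the argument principle / proper map theory leaf-by-leaf (this is where one uses that the image $f(\partial\Omega_{(s)})$ bounds a complex hypersurface, so that $G(\cdot,s)$ is a finite proper map onto it, hence degree one since it's injective on the boundary); therefore the partial derivative of $G$ in the $z$-directions is nonsingular on each leaf; (ii) show the $\partial/\partial s$ direction is not absorbed into the $z$-tangent directions — i.e. $\partial G/\partial s$ at $p$ is not in the span of $\partial G/\partial z_1,\dots,\partial G/\partial z_n$ — because if it were, then two nearby leaves $\{s = s_0\}$ and $\{s = s_0'\}$ would map into the same complex hypersurface, contradicting that the leaves of a (nonsingular) Levi-flat form a genuine one-parameter family, equivalently contradicting injectivity of $F$ restricted to a transversal, which in turn follows from $f$ being an embedding of $\partial\Omega$.

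The main obstacle I expect is step (i)–(ii) of the interior case: making rigorous that the images of the leaves $G(\Omega_{(s)})$ are complex hypersurfaces that fit together into a Levi-flat foliation without overlapping, and handling the finitely many critical values $s$ in $\pi_\R(\Sigma)$ where the leaf $\overline{\Omega}_{(s)}$ degenerates (its boundary acquires a CR singular point of $\partial\Omega$) — near such an $s$ one likely argues by continuity/connectedness, using that away from this finite set the conclusion holds and that the rank of $dG$ is lower semicontinuous, together with the fact that $\overline{\Omega}\setminus\Sigma$ is connected (or that each component meets the good set). A secondary technical point is verifying that $G(\cdot,s)$ is proper onto its image leaf; this should come from the fact that $f(\partial\Omega)$ is compact and the Levi-flat hull constructed in \cite{crext2} has the leaf $G(\Omega_{(s)})$ as a relatively compact piece of a complex hypersurface with boundary $f(\partial\Omega_{(s)})$. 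Once properness and boundary-injectivity are in hand, the degree-one conclusion and hence the nonvanishing of the leafwise Jacobian is standard, and the transversality in (ii) closes the argument.
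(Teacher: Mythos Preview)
Your boundary case via Proposition~\ref{prop:MimmersionatCR} is correct and matches the paper. The interior argument, however, has a genuine gap in step (ii). You argue that if $\partial G/\partial s$ lay in the span of $\partial G/\partial z_1,\dots,\partial G/\partial z_n$ at an interior point, then two nearby leaves would map into the same complex hypersurface, ``contradicting injectivity of $F$ restricted to a transversal, which in turn follows from $f$ being an embedding of $\partial\Omega$.'' Both links in this chain break. First, a pointwise linear dependence only says the two leaf-images are \emph{tangent} at one point, not that they coincide. Second, and more decisively, $f$ being an embedding of $\partial\Omega$ does \emph{not} force $F$ to be injective on transversals inside $\Omega$: Example~\ref{eg:BallSingCubic} satisfies every hypothesis of this lemma, yet the last component of $F$ there is $g(s)=1-4s^2+i(8s^3-2s)$, which takes the same value at $s=\pm\tfrac12$. (In that example $F$ is nonetheless an immersion, since $g'(s)$ never vanishes; the point is that your proposed mechanism for detecting immersivity cannot work.) Step (i) is also shaky: the degree/argument-principle reasoning you invoke needs an equidimensional target, but $G(\cdot,s)$ maps $\Omega_{(s)}\subset\C^n$ into $\C^{n+1}$; you remedy this by landing in ``a leaf of the Levi-flat solution,'' but the regularity of those leaves is exactly what the lemma is establishing, so this is circular.

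The paper avoids the decomposition into leafwise and transverse ranks altogether. It works with the single holomorphic function $\det DG$ on the ambient neighborhood $\Upsilon\subset\C^{n+1}$: its zero locus $B$ is a complex hypersurface, and one argues by contradiction. If $B$ meets $\Omega$ at some $(z_0,s_0)$, then the complex slice $B\cap\{w=s\}$ is, for nearby $s$, either all of $\{w=s\}$ or of pure complex dimension $n-1\ge 1$. In the latter case it is a noncompact positive-dimensional variety and must exit $\Omega_{(s)}$ through $\partial\Omega_{(s)}$; for $s\notin\pi_{\R}(\Sigma)$ (all but finitely many $s$, by Proposition~\ref{prop:flatCRsingsonleafs}) this contradicts the boundary case. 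In the former case $B$ contains an entire component of $\Omega_{(s_0)}$, forcing a $(2n-1)$-dimensional piece of $\partial\Omega$ into $B\cap\partial\Omega\subset\Sigma$, contradicting $\dim\Sigma\le n$ from Proposition~\ref{prop:sizeofCRsing} since $n\ge 2$. The idea you are missing is to treat the non-immersion locus as a complex hypersurface in $\C^{n+1}$ and exploit that its generic leafwise slice already has positive dimension, rather than trying to control the leafwise and transverse derivatives separately.
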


\begin{proof}
We use the notation of Lemma 
\ref{lemma:extension}.
By Proposition~\ref{prop:MimmersionatCR}, the map 
$G$ is an immersion at the 
CR points of $\partial\Omega$, and the same is true of $F$.
 Suppose for a contradiction that $F$ is 
not an immersion in $\Omega$.
Let $B \subset \Upsilon$ be the set where $\det DG = 0$,
which is a complex subvariety of dimension $n$.
Then there exists 
$s_0$ such that
$\bigl(\Omega \cap B\bigr)_{(s_0)}$
is nonempty.
Since $F$ is an immersion at CR points of $\partial \Omega$, 
$B \cap \partial \Omega \setminus \Sigma$ is empty.
As $B$ is a complex
variety of dimension $n$, either
$\bigl(\Omega \cap B\bigr)_{(s_0)}$ is of complex dimension $n$ (contains an
open set)
or
$\bigl(\Omega \cap B\bigr)_{(s)}$ is nonempty for all $s$ near $s_0$.
By Proposition~\ref{prop:flatCRsingsonleafs}, $\pi_{\R}(\Sigma)$ is a
finite set.
In particular, for $s$ near $s_0$,
$\bigl(\Omega \cap B\bigr)_{(s)}$ is empty.
Hence
$\bigl(\Omega \cap B\bigr)_{(s_0)}$ contains at least one
of the topological
components of $\Omega_{(s_0)}$.
The dimension of $\partial \bigl( \bigl(\Omega \cap B\bigr)_{(s_0)} \bigr)$ is $2n-1$,
so
$\bigl(\partial \Omega \cap B\bigr)_{(s_0)}$ is 
of dimension at least $2n-1$.  Therefore the dimension of
$\Sigma$ is at least $2n-1$.  However, by Proposition~\ref{prop:sizeofCRsing}
the dimension of $\Sigma$ is at most $n$. This gives a contradiction since  $n \geq 2$ implies $2n-1 > n$.
\end{proof}

Theorem~\ref{thm:main} now follows from the two lemmas.

\section{Regularity at flattenable CR singularities} \label{section:proofofcor}

Let us prove Corollary~\ref{cor:main}.
Suppose that $\Omega$ and $f$ are as in the statement of the corollary.
We will use Theorem~\ref{thm:main} to find an extension $F$, and
then use the result of Fang--Huang~\cite{FangHuang} to flatten the
CR singularities and apply the authors'
previous extension result in \cite{crext2} to find an
extension of the inverse of $f$.  In particular the main point
is the following extension lemma, which follows from combining
Fang--Huang~\cite{FangHuang} and \cite{crext2}.  Recall
that a real-analytic CR submanifold is non-minimal at $q$ if it contains
through $q$ a proper CR submanifold of the same CR dimension.

\begin{thm} \label{thm:fanghuanglnr}
Suppose $M \subset \C^{n+1}$, $n\geq 2$,
is a real-analytic codimension-two submanifold that is
CR singular at $p \in M$, with the CR singularity $A$-nondegenerate
and not the exceptional case (of the form \eqref{eq:exceptionalcase}).
Assume that $M$ is non-minimal at all CR
points.  Let $\varphi \colon M \to \C$ be a real-analytic function
that is CR at the CR points of $M$.  Then there exists a holomorphic
function $\Phi$ defined in a neighborhood $U$ of $p$ in $\C^{n+1}$
such that $\Phi|_{U \cap M} = \varphi|_{U \cap M}$.
\end{thm}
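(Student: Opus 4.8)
The plan is to reduce, via the Fang--Huang flattening theorem, to the holomorphically flat situation inside $\C^n \times \R$, and then to invoke the local extension of CR functions from \cite{crext2}.

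First I would flatten. The CR singularity of $M$ at $p$ is $A$-nondegenerate and is not the exceptional case \eqref{eq:exceptionalcase}, and $M$ is non-minimal at all of its CR points near $p$; these are exactly the hypotheses under which the Fang--Huang flattening theorem~\cite{FangHuang} applies. It produces a biholomorphic map $\Psi$ from a neighborhood of $p$ onto a neighborhood of the origin in $\C^{n+1}$ with $\Psi(p) = 0$ and $\Psi(M) \subset \C^n \times \R$ as germs at $0$. A biholomorphism preserves real-analyticity, the CR structure of a submanifold, $A$-nondegeneracy of a CR singularity, and the property that a function is CR at CR points; so, after replacing $M$ by $\Psi(M)$ and $\varphi$ by $\varphi \circ \Psi^{-1}$, I may assume that $M$ is the germ at $0$ of a real-analytic hypersurface in $\C^n \times \R$ with an $A$-nondegenerate CR singularity at $0$ and that $\varphi$ is real-analytic there and CR on the CR part of $M$. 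Producing the holomorphic extension in these coordinates and precomposing it with $\Psi$ then yields the extension required for the original $M$.

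Next I would carry out the extension in this flat model. Write $M$ as $s = \rho(z,\bar z)$ with $\rho \in O(\snorm{z}^2)$ real-valued and with nondegenerate Hermitian quadratic part $A$; for small $c \neq 0$ the leaf $M_{(c)} = \{ z \in \C^n : \rho(z,\bar z) = c \}$ is a smooth real hypersurface in $\C^n$ on which $\varphi(\cdot,c)$ is CR, and the CR-singular locus of $M$ is contained in the leaf $\{ s = 0 \}$. This is precisely the local configuration treated in \cite{crext2}, which supplies a holomorphic function $\Phi'$ on a full neighborhood $U'$ of $0$ in $\C^{n+1}$ with $\Phi'|_{U' \cap M} = \varphi|_{U' \cap M}$: one extends $\varphi(\cdot,c)$ holomorphically in $z$ leafwise and real-analytically in the parameter $c$, and then complexifies in the $s$ variable. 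When $A$ is positive definite the leafwise step is the classical Hartogs filling of a small sphere, using $n \geq 2$; the indefinite case, where the leaves $M_{(c)}$ are noncompact and cannot be filled by a bounded domain, is the substantive content of \cite{crext2}. Setting $U = \Psi^{-1}(U')$ and $\Phi = \Phi' \circ \Psi$ completes the argument.

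The main obstacle lies entirely in the two imported inputs, not in the reduction. The Fang--Huang flattening is where $A$-nondegeneracy, non-exceptionality, and non-minimality at CR points are all used, and its breakdown in the exceptional case \eqref{eq:exceptionalcase} is exactly why that case must be excluded here. In the flat model the delicate point is the leafwise holomorphic extension across the singular leaf when $A$ is indefinite. Beyond that, the care needed is bookkeeping: checking that the flattening transports the full hypothesis on $\varphi$ --- real-analyticity up to and including $p$, together with the CR condition --- without loss, and that what is invoked from \cite{crext2} is its local extension statement near an $A$-nondegenerate CR singularity rather than its global boundary version.
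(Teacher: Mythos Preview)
Your proposal is correct and follows essentially the same two-step approach as the paper: first apply Fang--Huang~\cite{FangHuang} to holomorphically flatten $M$ near $p$ into $\C^n \times \R$, and then invoke the local extension result of \cite{crext2} in that flat model. Your write-up is in fact considerably more detailed than the paper's proof, which dispatches the argument in two sentences.
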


\begin{proof}
Fang--Huang~\cite{FangHuang} prove that such an $M$ can be holomorphically
flattened near $p$, that is, realized as
a subset of $\C^n \times \R$.  Then the authors' result from \cite{crext2},
which is stated in $\C^n \times \R$, obtains the holomorphic extension
$\Phi$.
\end{proof}

\begin{proof}[Proof of Corollary~\ref{cor:main}]
Since $f(\partial \Omega)$ has only $A$-nondegenerate singularities,
Proposition~\ref{prop:imagenotnondegenerate} says that
$f$ must take CR points of $\partial \Omega$ to
CR points of $f(\partial \Omega)$.
On the other hand,
by Corollary~\ref{cor:imageofnondegeneratenotCR}, CR singular points go
to CR singular points.

We apply Theorem~\ref{thm:main} to obtain an extension $F$
that is an immersion on
$\overline{\Omega} \setminus \Sigma$.
What is left to prove is that its
derivative at each CR singularity is nonsingular.

Given a CR point $\widetilde{q} = f(q) \in f(\partial \Omega)$, we 
have that $q$ is a CR point of $\partial \Omega$.
At $q$, $F$ is an immersion, and therefore
the image $F(U)$ of a small neighborhood $U$ of $q$ in $\C^n \times \R$
is a nonsingular Levi-flat hypersurface in $\C^{n+1}$. Hence
$f(\partial \Omega)$ is non-minimal at $\widetilde{q}$. In other words,
$f(\partial \Omega)$ is non-minimal at all CR points.

Fix $p\in \Sigma$.  The point $\widetilde{p} = f(p)$ is a CR singularity of
$f(\partial \Omega)$,
assumed to be $A$-nondegenerate and
not the exceptional case.  Furthermore, $f(\partial \Omega)$ is 
non-minimal at all CR points, so
Theorem~\ref{thm:fanghuanglnr} applies.

Let $\varphi$ be the inverse of $f$ from $f(\partial \Omega)$,
that is, $f \circ \varphi = \operatorname{Id}$.
By Proposition~\ref{prop:CRdiffeo} and Corollary~\ref{cor:imageofnondegeneratenotCR}, $\varphi$ is a CR map on the CR points
of $f(\partial \Omega)$.
By Theorem~\ref{thm:fanghuanglnr},
$\varphi$ extends to a holomorphic map $\Phi$ on a neighborhood
$U \subset \C^n \times \C$
of $\widetilde{p}$.  As it is the unique
extension, $\Phi$ is the inverse of $F$ from
$U \cap F(V\cap\overline{\Omega})$ for some neighborhood $V$ of $p$.  Since $F \circ \Phi = \operatorname{Id}$ on
$U \cap F(V\cap\overline{\Omega})$, the derivative of $\Phi$, and therefore
of $F$, must be nonsingular at $\widetilde{p}$ and $p$, respectively.
\end{proof}

\section{Flat case for CR dimension greater than 1} \label{section:flat}

When the target is $\C^n \times \R$ rather than $\C^{n+1}$, we obtain more
rigidity.  Our  goal here is to prove 
Theorem~\ref{thm:flat} and Corollary~\ref{cor:auto}.

First we show that an extension must map into $\C^n \times \R$, and each
leaf goes to a leaf. This result holds for $n\geq 1$.

\begin{lemma} \label{lemma:mapstoflat}
Suppose $\Omega \subset \C^n \times \R$,
$n \geq 1$, is a bounded domain with real-analytic boundary,
and $F \colon \overline{\Omega} \to \C^{n+1} $ is a real-analytic CR
map such that $F(\partial \Omega)\subset \C^n \times \R$.
Then  $F(\Omega)\subset \C^n \times \R$, and if $(z,s) \in \C^n \times \R$
are the coordinates, then the last component of $F$ depends only on $s$.
\end{lemma}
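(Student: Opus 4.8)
The plan is to exploit that $F|_{\Omega}$ is CR, which in $\C^n\times\R$ means $\partial F_k/\partial\bar z_j=0$ for every component $F_k$ on all of $\Omega$ (and hence on $\overline{\Omega}$ by continuity of the real-analytic map), and then combine this with the boundary constraint $F(\partial\Omega)\subset\C^n\times\R$, i.e.\ $\Im F_{n+1}=0$ on $\partial\Omega$. Write $F=(F_1,\dots,F_n,F_{n+1})$ with $F_{n+1}=\sigma+i\tau$, and set $h=\Im F_{n+1}=\tau$, a real-analytic function on $\overline{\Omega}$. First I would observe that for each fixed $s$, the leaf $\Omega_{(s)}$ is an open subset of $\C^n$ (possibly empty or disconnected) on which each $F_k$ is holomorphic in $z$, because the CR condition says exactly that $F_k$ is a holomorphic function of $z$ when $s$ is held fixed. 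Thus $h$ restricted to each connected component of $\Omega_{(s)}$ is a pluriharmonic (in particular harmonic) function of $z$.

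The key step is a maximum-principle argument on each leaf. Fix $s$ with $\Omega_{(s)}\neq\emptyset$ and let $V$ be a connected component of $\Omega_{(s)}$. Since $\Omega$ is bounded, $V$ is a bounded open subset of $\C^n$, and $h$ is continuous on $\overline V$ and harmonic on $V$. The boundary $\partial V$ is contained in $\partial\bigl(\Omega\cap\{s=\text{const}\}\bigr)\subset(\partial\Omega)_{(s)}$, where $h=0$ by hypothesis. By the maximum principle for harmonic functions, $h\equiv 0$ on $V$. As this holds for every component of every nonempty leaf, $h\equiv 0$ on $\Omega$, hence $\Im F_{n+1}\equiv 0$ on $\overline{\Omega}$; that is, $F(\Omega)\subset\C^n\times\R$.

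Once we know $F_{n+1}$ is real-valued on $\overline\Omega$, the fact that it depends only on $s$ follows from the CR condition again: $F_{n+1}$ is holomorphic in $z$ on each leaf and real-valued there, hence constant in $z$ on each connected component of each leaf; it remains to see that the constant does not jump between components of the same leaf or vary non-trivially. For this I would use that $\overline\Omega$ is connected (it is the closure of a domain) together with real-analyticity: $F_{n+1}$ extends holomorphically to a connected neighborhood $U\subset\C^n\times\C$ of $\overline\Omega$ by Proposition~\ref{prop:flatCRcomplexify}, and on $\Omega$ we have $\partial F_{n+1}/\partial z_j=0$ for all $j$ (since $F_{n+1}$ is holomorphic-in-$z$ and locally constant-in-$z$ on leaves); by the identity theorem $\partial G_{n+1}/\partial z_j\equiv 0$ on $U$, so the holomorphic extension $G_{n+1}$, and hence $F_{n+1}$, is a function of $w$ alone, which on $\overline\Omega\subset\C^n\times\R$ means a function of $s$ alone.

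The main obstacle is handling possibly disconnected or geometrically complicated leaves $\Omega_{(s)}$: one must be careful that each $\Omega_{(s)}$ really is open in $\C^n$ (true because $\Omega$ is open in $\C^n\times\R$), that $h$ is genuinely harmonic there (immediate from the CR equations once we know $F$ is $C^2$ up to the boundary, which real-analyticity gives), and that the boundary of each leaf-component lies in $(\partial\Omega)_{(s)}$ so the boundary values vanish. The identity-theorem step in the last paragraph is what cleanly rules out the constant depending on $s$ in a non-real-analytic way or jumping between components, since it passes everything to the connected complexification $U$. No smooth-category subtleties arise because we are assuming $F$ real-analytic throughout.
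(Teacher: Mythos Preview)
Your proposal is correct and follows essentially the same argument as the paper: restrict $F_{n+1}$ to each leaf $\Omega_{(s)}$, observe it is holomorphic in $z$ there and real-valued on the leaf boundary $(\partial\Omega)_{(s)}$, and conclude via the maximum principle that it is real-valued and hence constant in $z$. Your final step, invoking Proposition~\ref{prop:flatCRcomplexify} and the identity theorem on the complexification to rule out the constant jumping between connected components of a single leaf, is more careful than the paper's proof, which simply asserts that $F_2$ ``depends only on $s$'' once it is known to be constant on each leaf.
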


\begin{proof}
We write $F=(F_1,F_2)$, where $F_2$ is the last component of $F$.
Fix an $s$ such that
$\Omega_{(s)}$ is nonempty.
The set $\Omega_{(s)}$ is bounded,
and $z \mapsto F_2(z,s)$ is real-valued at the boundary
$\partial \Omega_{(s)}$. Hence
the holomorphic function $z \mapsto F_2(z,s)$
must be real-valued on $\Omega_{(s)}$, and as it is holomorphic,
it must be constant.
Thus, $F_2$ is real-valued on $\Omega$ and depends only on $s$.
\end{proof}

To prove that an extension is one-to-one under certain conditions,
we need the following lemma, which is surely classical,
although we did not find a good reference.

\begin{lemma} \label{lemma:onetooneonboundarya}
Suppose $U \subset \C^n$, $n \geq 2$, is a bounded domain,
$\overline{U} \subset W$ for an open set $W$, $\Phi \colon W \to \C^n$ is
holomorphic, $\Phi|_{\partial U}$
is one-to-one, and the Jacobian determinant $\det D\Phi$ is nonzero on
$\partial U$.
Then $\Phi|_{\overline{U}}$ is one-to-one, and in fact
$\Phi$ is a biholomorphic map of a neighborhood of $\overline{U}$.
\end{lemma}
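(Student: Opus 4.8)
The plan is to use a degree/counting argument based on the argument principle in several variables, together with the open mapping property of $\Phi$. First I would observe that, since $\det D\Phi \neq 0$ on the compact set $\partial U$, by continuity $\det D\Phi \neq 0$ on a neighborhood of $\partial U$; shrinking $W$ if necessary we may also assume $\Phi|_{\partial U}$ extends to a one-to-one map on a neighborhood of $\partial U$ (using that a one-to-one local biholomorphism on $\partial U$ stays one-to-one nearby, by compactness). Thus $\Phi(\partial U)$ is a compact real $(2n-1)$-dimensional submanifold of $\C^n$, and $\Phi$ maps a neighborhood of $\partial U$ biholomorphically onto a neighborhood of $\Phi(\partial U)$.

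Next I would set up the counting function. For a point $w \notin \Phi(\partial U)$, let $N(w)$ denote the number of solutions in $U$ of $\Phi(z) = w$, counted with multiplicity. Because $\det D\Phi$ is holomorphic and not identically zero (it is nonzero near $\partial U$, hence $\not\equiv 0$ on the component of $W$ meeting $U$), the critical set $\{\det D\Phi = 0\}$ is a proper analytic subvariety; away from its image, all solutions are simple. The function $N(w)$ is given by an integral over $\partial U$ of a pullback of the Bochner--Martinelli kernel (or, more elementarily, $N$ is locally constant on $\C^n \setminus \Phi(\partial U)$ by standard properness/homotopy arguments for proper holomorphic maps). Since $\Phi(\partial U)$ is compact, $N(w) = 0$ for $w$ in the unbounded component of the complement, and $N$ is constant on each connected component. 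The key point is then: for $w$ in the ``inside'' of $\Phi(\partial U)$, $N(w)$ equals a fixed value $d \geq 1$, and I would show $d = 1$ by exhibiting one point with exactly one simple preimage — concretely, take $w$ close to a point $\Phi(q)$ with $q \in \partial U$ and $\det D\Phi(q) \neq 0$: on the ``interior side'' near $\Phi(q)$ the local biholomorphism contributes exactly one preimage, and no other preimage can appear because any other would have to approach $\partial U$, forcing $w \in \Phi(\partial U)$ by the local injectivity near the boundary, a contradiction. Hence $d = 1$, so $\Phi$ is injective on $U$; combined with injectivity on $\partial U$ and the fact that $\Phi(U) \cap \Phi(\partial U) = \emptyset$ (again from local injectivity near $\partial U$), we get $\Phi|_{\overline U}$ injective.

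Finally, injectivity of $\Phi$ on $\overline U$ plus $\det D\Phi \neq 0$ on $\partial U$ gives that $\Phi$ is a biholomorphism from $U$ onto its image, and since $\det D\Phi$ is nonzero on a neighborhood of $\partial U$ and $\Phi$ is injective on $\overline{U}$, a standard compactness argument upgrades this to biholomorphicity on a full neighborhood of $\overline U$: cover $\overline U$ by finitely many open sets on which $\Phi$ is injective, and use injectivity on $\overline U$ to patch. The main obstacle I anticipate is making the degree-counting argument rigorous in the several-variables setting — in $\C^n$ one cannot just invoke the one-variable argument principle, so I would either cite the Bochner--Martinelli/transformation-of-integrals machinery or, preferably, phrase everything via properness of $\Phi$ restricted to $\Phi^{-1}(V) \cap U$ for a small neighborhood $V$ of the interior region, where properness follows from $\Phi(\partial U) \cap V = \emptyset$, and then invoke that a proper holomorphic map between domains in $\C^n$ has a well-defined constant covering number. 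The hypothesis $n \geq 2$ does not seem essential for the argument but is stated because it is the case needed; I would keep it to match the paper's usage.
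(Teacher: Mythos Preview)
Your degree-theoretic approach is genuinely different from the paper's. The paper instead constructs a global holomorphic inverse $\Psi$: it first reduces to connected $\partial U$ by filling holes via Hartogs, then glues local inverses of $\Phi$ along the connected set $\Phi(\partial U)$ (uniqueness of the local inverse uses that $\Phi(\partial U)$ has topological dimension at least $2n-1$, so no nonconstant holomorphic function vanishes on it), and finally extends $\Psi$ across the bounded region enclosed by $\Phi(\partial U)$ by Hartogs again; then $\Psi \circ \Phi = \mathrm{Id}$ near $\partial U$, hence on $U$. The hypothesis $n \geq 2$ is used essentially, both in the two Hartogs steps and in the observation that $\det D\Phi$ cannot vanish on $U$ (its zero set would be a compact positive-dimensional analytic set in $\C^n$).

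Your sketch, by contrast, never uses $n \geq 2$, and you say so explicitly. That is a red flag: the statement is \emph{false} for $n=1$ without assuming $\partial U$ connected --- the paper gives the counterexample $z \mapsto z + 1/z$ on an annulus right after the $n=1$ version of the lemma. So something in your argument must fail, and it is the ``$d=1$'' step. You establish $N(w)=1$ only on the bounded component of $\C^n \setminus \Phi(\partial U)$ that touches the unbounded component across a sheet of $\Phi(\partial U)$; but the complement may have several bounded components (already for the annulus map there are nested ellipses), and you have not shown $N=1$ on all of them nor that $\Phi(U)$ meets only one. The sentence ``no other preimage can appear because any other would have to approach $\partial U$'' is precisely the unjustified assertion: a second preimage can sit well inside $U$, and nothing you wrote excludes $z' \in U$ with $\Phi(z') = \Phi(q)$ for some $q \in \partial U$. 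A secondary issue: you treat $\partial U$ as a smooth $(2n-1)$-manifold, but the lemma assumes only that $U$ is a bounded domain; the paper works with topological dimension instead.
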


See Lemma~\ref{lemma:onetooneonboundaryn1} for the $n=1$ version.

\begin{proof}
Since $\Phi$ is holomorphic in a neighborhood of $\overline{U}$,
we assume that $\partial U = \partial \overline{U}$.

Let us reduce to the case of connected boundary.  If the boundary is
not connected, then we extend $\Phi$ through the holes using Hartogs'
extension result.  We then apply the lemma using $\Phi$ and the exterior
boundary component of $U$.

The function $\Phi$ has an inverse locally near $\Phi(\partial U)$.
Take a point $q \in \Phi(\partial U)$, and
let $p = \Phi^{-1}(q)$.  Near $p$, $\partial U$ is the limit of
points both in $U$ and outside of $\overline{U}$, and therefore
near $p$, $\partial U$ has 
topological dimension at least $2n-1$
(see e.g.~\cite{HurewiczWallman}*{Theorem IV 4}).
Consequently
$\Phi(\partial U)$ also has dimension at least $2n-1$ near $q$.
Therefore, no nonconstant holomorphic function can vanish on
$\Phi(\partial U)$ in a neighborhood of $q$.
In particular, there is exactly one inverse of $\Phi$ near $q$
that agrees with $\Phi^{-1}$ on $\Phi(\partial U)$.
We therefore have a holomorphic mapping $\Psi$ defined on a neighborhood
of the connected set $\Phi(\partial U)$.
By Hartogs' theorem we find that $\Psi$ extends to the inside of
the bounded hypersurface $\Phi(\partial U)$.

The Jacobian determinant of $\Phi$ never vanishes on $U$ since it does
not vanish on  the boundary and the domain is bounded.
Thus $\Phi$ is an open map, and hence the only 
points of $\overline{U}$ that go to the boundary of
$\Phi(U)$ must be points of $\partial U$.
In particular,
$\Phi$ does not map any point of $U$ to the outside of $\Phi(\partial U)$.
We have that $\Psi \circ \Phi$ is the identity near
$\partial U$ and hence everywhere in $U$, so $\Phi$ is a biholomorphism (in
a possibly smaller neighborhood $W$).
\end{proof}

Another result for which we could not find a good reference
(except as an exercise) is the following version of a well-known
one-variable result.

\begin{prop} \label{prop:limitonetoone}
Suppose $U \subset \C^n$ is open and $\Phi_k \colon U \to \C^n$ a sequence
of holomorphic mappings that are one-to-one and converge uniformly on
compact sets to $\Phi \colon U \to \C^n$.  Then either
$\Phi$ is one-to-one or
$\det D\Phi$ vanishes on some topological component of $U$.
\end{prop}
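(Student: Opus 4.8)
The plan is to pass first to the Jacobian determinant, apply Hurwitz's theorem there, and only then rule out the failure of injectivity by a degree argument. The point worth appreciating at the outset is that one cannot argue directly on $\Phi$ — say by trying to transplant a coincidence $\Phi(a)=\Phi(b)$ back to the $\Phi_k$: a map such as $(z_1,z_2)\mapsto(z_1,z_1z_2)$ is not injective yet has $\det D\Phi\not\equiv 0$, so in general such a coincidence need not survive a perturbation. The Jacobian is exactly what detects the difference, so the dichotomy in the statement is the natural one.

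First I would invoke the classical fact that an injective holomorphic map between open subsets of $\C^n$ is biholomorphic onto its (open) image; in particular $\det D\Phi_k$ is nowhere zero on $U$ for every $k$. Since $\Phi_k\to\Phi$ uniformly on compact sets, the Cauchy estimates give locally uniform convergence of all first-order partial derivatives, hence $\det D\Phi_k\to\det D\Phi$ locally uniformly. By the several-variables Hurwitz theorem — which reduces to the one-variable statement upon restricting to a complex line through a hypothetical zero — on each connected component $V$ of $U$ the function $\det D\Phi|_V$ is either identically zero or nowhere zero. If it is identically zero on some component, we are done. So assume $\det D\Phi$ is nowhere zero on all of $U$; it remains to show that $\Phi$ is one-to-one.

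Suppose not, say $\Phi(a)=\Phi(b)=w_0$ with $a\neq b$. Since $\det D\Phi$ never vanishes, $\Phi$ is open and locally biholomorphic, so I may choose $r>0$ with $\overline{B(a,2r)}$ and $\overline{B(b,2r)}$ disjoint, contained in $U$, and with $\Phi$ injective on each of them. Then $w_0\notin\Phi(\partial B(a,r))\cup\Phi(\partial B(b,r))$, so the Brouwer degrees $\deg(\Phi,B(a,r),w_0)$ and $\deg(\Phi,B(b,r),w_0)$ are defined, and each equals $+1$ because a holomorphic map is orientation-preserving and has local degree $1$ at a point where its Jacobian is nonsingular. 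Choosing $\epsilon>0$ smaller than a third of the distance from $w_0$ to each of the two compact image spheres, one gets for all large $k$ that the straight-line homotopy $(1-t)\Phi+t\Phi_k$ avoids $w_0$ on $\partial B(a,r)$ and on $\partial B(b,r)$ for every $t\in[0,1]$; homotopy invariance of the degree then gives $\deg(\Phi_k,B(a,r),w_0)=\deg(\Phi_k,B(b,r),w_0)=1$. Hence there are $a_k\in B(a,r)$ and $b_k\in B(b,r)$ with $\Phi_k(a_k)=\Phi_k(b_k)=w_0$, and the two balls being disjoint forces $a_k\neq b_k$, contradicting the injectivity of $\Phi_k$. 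Therefore $\Phi$ is one-to-one.

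I expect the only delicate part to be the routine bookkeeping with the radii and $\epsilon$ in the degree step; everything else is a direct appeal to Hurwitz's theorem and to the standard structure of injective holomorphic maps. If one prefers to avoid topological degree theory altogether, one can instead compose $\Phi_k$ with the local holomorphic inverse of $\Phi$ near $a$ (resp.\ near $b$), reducing the claim to the statement that a holomorphic self-map of a ball which is uniformly close to the identity is surjective onto a slightly smaller concentric ball — itself a Rouché-type consequence of the argument principle in $\C^n$.
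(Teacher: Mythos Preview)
Your proof is correct and follows essentially the same two-step strategy as the paper: first apply Hurwitz to $\det D\Phi$ (using that injective holomorphic self-maps of open sets in $\C^n$ have nowhere-vanishing Jacobian), then, in the case where $\det D\Phi$ never vanishes, derive a contradiction from a hypothetical coincidence $\Phi(a)=\Phi(b)$ by a perturbation argument. The only cosmetic difference is in the second step: you invoke Brouwer degree and homotopy invariance explicitly, whereas the paper takes disjoint neighborhoods $\sB_1,\sB_2$ mapped biholomorphically by $\Phi$ onto $B_\epsilon(q)$, observes that for large $k$ one has $B_{\epsilon/2}(q)\subset\Phi_k(\sB_1)$, and then notes that injectivity of $\Phi_k$ forces $\Phi_k(p_2)\notin B_{\epsilon/2}(q)$, contradicting $\Phi_k(p_2)\to q$. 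The paper's claim that $\Phi_k(\sB_1)$ covers the half-radius ball is itself a Rouch\'e/degree fact, so the two arguments are really the same; your alternative sketch at the end (compose with the local inverse of $\Phi$) is in fact closest in spirit to what the paper does.
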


\begin{proof}
As $\det D\Phi$ is a single holomorphic function we apply Hurwitz to find
that either it vanishes identically on some component of $U$ or it 
vanishes nowhere.  Suppose it vanishes nowhere.
Suppose for a contradiction that
$\Phi(p_1) = q$, 
$\Phi(p_2) = q$, and $p_1 \not= p_2$.  Each $\Phi_k$ is a diffeomorphism
and $\Phi$ is a local diffeomorphism and an open map.
Take a small enough open ball $B_\epsilon(q)$ (of radius $\epsilon > 0$
around $q$)
such that $\Phi^{-1}\bigl(B_\epsilon(q)\bigr)$ contains
two disjoint diffeomorphic images of $B_\epsilon(q)$, call them
$\sB_1$ and $\sB_2$, with
$p_1 \in \sB_1$ and
$p_2 \in \sB_2$.
For $k$ large enough $\Phi_k$ takes the boundary of $\sB_1$ close
enough to the boundary of $B_\epsilon(q)$ such that
$B_{\epsilon/2}(q) \subset \Phi_k(\sB_1)$.  
Then $\Phi_k(p_2)$ is more than $\epsilon/2$ away from $q$,
which is a contradiction.
\end{proof}

We can now prove that an extension is one-to-one 
if it maps CR points of the boundary to CR points.

\begin{lemma} \label{lemma:onetoonea}
Suppose $\Omega \subset \C^n \times \R$, $n \geq 2$,
is a bounded domain with connected
real-analytic boundary
such that $\partial \Omega$ has only $A$-nondegenerate CR singularities.
Suppose $F \colon \overline{\Omega} \to \C^n \times \R$ is a real-analytic CR
map such that $F|_{\partial \Omega}$ is a real-analytic
embedding that takes CR points of $\partial \Omega$ to
CR points of $F(\partial \Omega)$.
Then $F$ is one-to-one on $ \overline{\Omega}$.
\end{lemma}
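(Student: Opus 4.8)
The plan is to use Lemma~\ref{lemma:mapstoflat} to split off the real direction, reduce the injectivity of $F$ to injectivity of a one-parameter family of holomorphic maps of domains in $\C^n$, and then invoke Lemma~\ref{lemma:onetooneonboundarya} leaf by leaf. First I would collect what the earlier results give for free. By Lemma~\ref{lemma:mapstoflat}, $F(\overline{\Omega})\subset\C^n\times\R$, and writing $F=(F_1,F_2)$ in the coordinates $(z,s)$, the last component $F_2$ depends only on $s$; write $F_2=h(s)$. Since $F$ is CR, Proposition~\ref{prop:flatCRcomplexify} complexifies it to a holomorphic map $G=(G_1,G_2)$ on a neighborhood of $\overline{\Omega}$ in $\C^n\times\C$; by uniqueness $G_2$ depends only on $w$ and restricts to $h$ on $\R$. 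The hypotheses of Lemma~\ref{lemma:FimmersioninOmega} are exactly those in force, so $F$ is an immersion on $\overline{\Omega}\setminus\Sigma$; since $\Sigma\subset\partial\Omega$, this means $F$ is a local diffeomorphism at every point of $\Omega$ and at every CR point of $\partial\Omega$. Differentiating and using $\partial_z G_2=0$, the derivative of $F$ at such a point is block triangular with diagonal blocks $\partial_z G_1$ and $h'(s)$, so invertibility forces both $h'(s)\neq 0$ and $\partial_z G_1$ invertible there. Hence $h'$ is nonzero on the open interval $\pi_{\R}(\Omega)$, so $h$ is strictly monotone, and therefore injective, on $\pi_{\R}(\overline{\Omega})$. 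Consequently $F(z_1,s_1)=F(z_2,s_2)$ forces $s_1=s_2$, and it suffices to prove that for each $s$ the slice map $\Phi_s:=G_1(\cdot,s)$ is one-to-one on $\overline{\Omega_{(s)}}$.

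By Proposition~\ref{prop:flatCRsingsonleafs} the set $\pi_{\R}(\Sigma)$ is finite. Fix $s$ outside this finite set. Then the $\{s\}$-leaf is transverse to $\partial\Omega$, so $\overline{\Omega_{(s)}}$ is a compact manifold with smooth boundary $\partial\Omega_{(s)}=(\partial\Omega)_{(s)}$ consisting of CR points of $\partial\Omega$; on $\partial\Omega_{(s)}$ the map $\Phi_s$ is one-to-one (because $f=F|_{\partial\Omega}$ is an embedding) and $\det D\Phi_s=\det\partial_z G_1$ is nonvanishing (by the immersion property). Applying Lemma~\ref{lemma:onetooneonboundarya} to each connected component $U$ of $\Omega_{(s)}$ shows $\Phi_s$ is biholomorphic on a neighborhood of $\overline{U}$; in particular it is one-to-one on $\overline{U}$, the set $\Phi_s(U)$ is the bounded region cut out by $\Phi_s(\partial U)$, and $\Phi_s^{-1}$ is a biholomorphism of a neighborhood of $\Phi_s(\overline{U})$ onto a neighborhood of $\overline{U}$. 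Using this one rules out overlaps between $\Phi_s(\overline{U_1})$ and $\Phi_s(\overline{U_2})$ for distinct components $U_1,U_2$: since $\Phi_s$ is one-to-one on $\partial\Omega_{(s)}\supset\partial U_1\cup\partial U_2$ and biholomorphic on a full neighborhood of $\overline{U_1}$, an overlap would force a boundary component of $U_2$ to land in $\Phi_s(U_1)$ and hence, pulling back, to lie in $\overline{U_1}$ — impossible, as $\partial U_2$ is disjoint from $\Omega_{(s)}$. Therefore $\Phi_s$ is one-to-one on $\overline{\Omega_{(s)}}$ for every $s\notin\pi_{\R}(\Sigma)$.

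It remains to treat the finitely many values $s_0\in\pi_{\R}(\Sigma)$. Here I would choose generic $s_k\to s_0$; since $G_1$ is jointly holomorphic, $\Phi_{s_k}\to\Phi_{s_0}$ uniformly on compact subsets of a fixed neighborhood of $\overline{\Omega_{(s_0)}}$, and each $\Phi_{s_k}$ is one-to-one on $\Omega_{(s_k)}$. Exhausting $\Omega_{(s_0)}$ by compacts (each eventually contained in $\Omega_{(s_k)}$) and applying Proposition~\ref{prop:limitonetoone}, together with the fact that $\det D\Phi_{s_0}=\det\partial_z G_1(\cdot,s_0)$ vanishes nowhere on $\Omega_{(s_0)}$ (as $F$ is a local diffeomorphism on $\Omega$), shows $\Phi_{s_0}$ is one-to-one on $\Omega_{(s_0)}$. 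To push this to the closure: if $\Phi_{s_0}(p_1)=\Phi_{s_0}(p_2)$ with $p_1\neq p_2$, then $p_1,p_2$ cannot both lie on $\partial\Omega_{(s_0)}$ (since $f$ is an embedding), so one lies in $\Omega_{(s_0)}$, which puts a point of the open set $F(\Omega)$ into $f(\partial\Omega)$; this is excluded by the same limiting argument applied to the $\Phi_{s_k}$ near $\partial\Omega_{(s_k)}$, where they are biholomorphisms of neighborhoods. Hence $\Phi_{s_0}$ is one-to-one on $\overline{\Omega_{(s_0)}}$, and therefore $F$ is one-to-one on $\overline{\Omega}$.

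The main obstacle is the final step together with the overlap argument: the slices $\Omega_{(s)}$ need not be connected, and their topology jumps across the finitely many critical heights $\pi_{\R}(\Sigma)$. The delicate points are propagating injectivity across distinct components, which is exactly where one needs the full force of Lemma~\ref{lemma:onetooneonboundarya} (biholomorphic on a \emph{neighborhood} of the closure, not merely injective on it), and controlling the limiting slices at the critical heights, where $F$ may fail to be an immersion on the CR singular part of $\partial\Omega$. Finiteness of $\pi_{\R}(\Sigma)$ keeps the number of limiting arguments finite, and Proposition~\ref{prop:limitonetoone} supplies the analytic input for those limits.
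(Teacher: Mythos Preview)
Your approach mirrors the paper's: split $F=(F_1,h(s))$, show $h$ is monotone, and prove leaf-wise injectivity of $\Phi_s=F_1(\cdot,s)$ via Lemma~\ref{lemma:onetooneonboundarya} at generic heights and Proposition~\ref{prop:limitonetoone} at the finitely many exceptional ones. Invoking Lemma~\ref{lemma:FimmersioninOmega} up front is a clean shortcut the paper does not take: it gives you both $h'\neq 0$ on $\pi_{\R}(\Omega)$ and $\det\partial_z G_1\neq 0$ on $\Omega$ in one stroke, whereas the paper re-derives each of these separately by dimension counting against $\dim\Sigma\le n$ from Proposition~\ref{prop:sizeofCRsing} (arguing that $F_2'(s_0)=0$, respectively $\det D\Phi_{s_0}\equiv 0$ on a component, would force a $(2n-1)$-dimensional piece of $\Sigma$).

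There is one concrete slip in your overlap argument for disconnected $\Omega_{(s)}$. If $q\in\partial U_2$ has $\Phi_s(q)\in\Phi_s(U_1)$, then ``pulling back'' with the biholomorphic inverse $(\Phi_s|_{W_1})^{-1}$ produces some $p\in U_1$ with $\Phi_s(p)=\Phi_s(q)$; it does \emph{not} force $q$ itself to lie in $\overline{U_1}$. So you have manufactured exactly the interior--boundary coincidence you are trying to exclude, not a contradiction. (The paper sidesteps this by simply applying Lemma~\ref{lemma:onetooneonboundarya} to $\Omega_{(s)}$ without discussing components.) Your final step, handling the closure at exceptional heights ``by the same limiting argument,'' is also too sketchy: to pass injectivity to the boundary you need that the $\Phi_{s_k}$ are biholomorphic on \emph{neighborhoods} of $\overline{\Omega_{(s_k)}}$, not merely one-to-one on $\Omega_{(s_k)}$, and you need to say how this controls points that limit onto $\Sigma_{(s_0)}$.
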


\begin{proof}
Proposition~\ref{prop:MimmersionatCR} applies since  
$F$ extends holomorphically to a
neighborhood of $\overline{\Omega}$ in $\C^{n+1}$. Thus,
$F$ is an immersion at CR points of the boundary.

As in the proof of Lemma~\ref{lemma:mapstoflat}, we write 
$F = (F_1,F_2)$ and find that $F_2$ is a real-valued function
that depends on $s$ only. Since $F$ is an immersion at CR points of the boundary and
 $F_2$ depends on $s$ only, $F_1$ restricted to a leaf
has an invertible derivative at CR points of the boundary.

As in Proposition~\ref{prop:flatCRsingsonleafs}, 
let $\pi_{\R}(\Sigma) \subset \R$ be the set of $s$ that correspond to the CR
singularities.  Then for any $s \notin \pi_{\R}(\Sigma)$
such that $\Omega_{(s)}$ is nonempty,
we find ourselves in the situation of
Lemma~\ref{lemma:onetooneonboundarya}.
Therefore, $F_1$, and hence $F$,
is one-to-one when
restricted to a leaf $s \notin \pi_{\R}(\Sigma)$.
That is, if $F(z,s) = F(z',s)$ then $z=z'$.

Now take an exceptional leaf, that is,
assume that $s_0 \in \pi_{\R}(\Sigma)$.
By Proposition~\ref{prop:flatCRsingsonleafs} there exists
a sequence $\{ s_k \}$ where $s_k \to s_0$ and
the maps $\Phi_k(z) = F_1(z,s_k)$ are one-to-one.
Take the largest open set
$U \subset \Omega_{(s_0)}$ such that $\Phi_k(z)$ is defined
for all $k$, that is, $U = \bigcap_k \Omega_{(s_k)}$.
Apply Proposition~\ref{prop:limitonetoone} to find that
either $\Phi(z) = F_1(z,s_0)$ is one-to-one on $U$
or $\det D \Phi$ vanishes on some component of $U$.
  Taking all possible tails of
the sequence, such $U$ fill
$\Omega_{(s_0)}$.  So
$\Phi(z)$ is one-to-one on $\Omega_{(s_0)}$
or $\det D\Phi$ vanishes on some component.
If $\det D\Phi$ vanishes on some component, then
every point of $\partial \Omega_{(s_0)}$ must correspond
to CR singular points of $\partial \Omega$.  Then the dimension
of the CR singular points is at least $2n-1$, but
Proposition~\ref{prop:sizeofCRsing} says that
the dimension of $\Sigma$ is at most $n$.
As $n \geq 2$, then $2n-1 > n$ obtains a contradiction.
Therefore, $\Phi$ is one-to-one, or in other words,
if $F(z,s_0) = F(z',s_0)$ then $z=z'$.

Assume for a contradiction that there exist $(z,s_1)$ and $(z',s_2)$
such that $s_1\neq s_2$ but $F(z,s_1) = F(z',s_2)$.
By the mean value theorem applied to $F_2$ there exists 
$s_0$ between $s_1$ and $s_2$ such that
$F_2'(s_0) = 0$.  As $F$ restricted to $\partial \Omega$ is a
diffeomorphism, this means that the image of $\partial \Omega$ near every
$(z'',s_0) \in \partial \Omega$ must be tangent to a leaf, that is,
$F(z'',s_0)$ is a CR singular point of $F(\partial \Omega)$.
This would imply that the CR singular points of $F(\partial \Omega)$
have dimension at least $2n-1$.
As CR points go to CR points, the CR singular points of $F(\partial \Omega)$
must lie in $F(\Sigma)$ (in fact the two sets are equal).
By
Proposition~\ref{prop:sizeofCRsing} the dimension of $\Sigma$ (and hence of
$F(\Sigma)$) is at most $n$, and
therefore the dimension of the set of CR singular points of
$F(\partial \Omega)$ is also at most $n$.
As $n \geq 2$, then $2n-1 > n$ obtains a contradiction.
\end{proof}

We now show that the extension is an immersion at the CR 
singular points if the image has only $A$-nondegenerate
CR singularities and $n\geq 2$. 

\begin{lemma} \label{lemma:embedatsing}
Suppose $\Omega \subset \C^n \times \R$, $n \geq 2$,
is a bounded domain with connected
real-analytic boundary
such that $\partial \Omega$ has only $A$-nondegenerate CR singularities.
Suppose $F \colon \overline{\Omega} \to \C^n \times \R$ is a 
real-analytic CR
map such that $F|_{\partial \Omega}$ is a real-analytic
embedding whose image has only $A$-nondegenerate
CR singularities. Then $F$ is an immersion at each CR 
singular point of $\partial{\Omega}$.
\end{lemma}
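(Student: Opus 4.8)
The plan is to run the argument from the proof of Corollary~\ref{cor:main}, but to exploit that the target is already flat: this lets us bypass the Fang--Huang flattening step, and in particular avoid excluding the exceptional case of the form \eqref{eq:exceptionalcase}.

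First I would assemble the structural facts. Write $f = F|_{\partial\Omega}$. Since $F(\partial\Omega)$ has only $A$-nondegenerate CR singularities, Proposition~\ref{prop:imagenotnondegenerate} forces $f$ to take CR points of $\partial\Omega$ to CR points of $F(\partial\Omega)$, and Corollary~\ref{cor:imageofnondegeneratenotCR} forces CR singular points to go to CR singular points. By Lemma~\ref{lemma:extension}, $F$ is the restriction of a holomorphic map $G\colon\Upsilon\to\C^{n+1}$ defined on a neighborhood $\Upsilon$ of $\overline\Omega$, and by Theorem~\ref{thm:main} (through Lemma~\ref{lemma:FimmersioninOmega}) $F$ is an immersion on $\overline\Omega\setminus\Sigma$; in particular $F$ is an immersion at every CR point of $\partial\Omega$. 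By Lemma~\ref{lemma:mapstoflat} the last component of $F$ depends on $s$ alone, so $F$ sends the $s$-leaves of $\C^n\times\R$ (which are complex submanifolds) into complex submanifolds of $F(\partial\Omega)$. As in the proof of Corollary~\ref{cor:main}, combining this with immersivity at CR points shows that $F(\partial\Omega)$ is non-minimal at all of its CR points.

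Now fix a CR singular point $p\in\Sigma$, and put $\widetilde p = F(p)$ and $\widetilde M = F(\partial\Omega) \subset \C^n\times\R$. Then $\widetilde p$ is an $A$-nondegenerate CR singularity of the real-analytic submanifold $\widetilde M$, and $\widetilde M$ is non-minimal at all its CR points. Let $\varphi = f^{-1}$; by Proposition~\ref{prop:CRdiffeo} together with Corollary~\ref{cor:imageofnondegeneratenotCR}, $\varphi$ is CR at the CR points of $\widetilde M$. Because $\widetilde M$ already lies in $\C^n\times\R$, no holomorphic flattening is needed, and I would apply the local extension result of \cite{crext2} directly (the very one invoked inside the proof of Theorem~\ref{thm:fanghuanglnr}), whose hypotheses are only $A$-nondegeneracy of the CR singularity and non-minimality at CR points---there is no exceptional-case restriction, that restriction being an artifact of the Fang--Huang flattening. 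Applied to each component of $\varphi$, this produces a holomorphic map $\Phi$ on a neighborhood $U$ of $\widetilde p$ in $\C^{n+1}$ with $\Phi|_{U\cap\widetilde M} = \varphi|_{U\cap\widetilde M}$.

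Finally I would finish as at the end of the proof of Corollary~\ref{cor:main}. The CR singular set of $\widetilde M$ has dimension at most $n < 2n$ by Proposition~\ref{prop:sizeofCRsing}, so CR points of $\widetilde M$ accumulate at $\widetilde p$, and near such a point $\widetilde M$ is a generic real-analytic submanifold of $\C^{n+1}$ and hence a uniqueness set for holomorphic functions; thus $\Phi$ is the unique holomorphic extension of $\varphi$, and $G\circ\Phi = \operatorname{Id}$ holds on a full neighborhood of $\widetilde p$. Differentiating at $\widetilde p$, where $\Phi(\widetilde p) = p$, shows $DG(p)$ is invertible, so $F$ is an immersion at $p$. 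I expect the delicate point to be the invocation of the \cite{crext2} extension theorem in the exceptional case: one must be sure that, once the target is flat, the extension result genuinely applies with no exceptional-case exclusion, so that this lemma---unlike Corollary~\ref{cor:main}---needs no such hypothesis.
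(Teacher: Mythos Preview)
Your argument is correct and follows the template of Corollary~\ref{cor:main} faithfully: you extend the inverse $\varphi=f^{-1}$ locally near each CR singular image point using the \cite{crext2} extension result (which indeed needs only $A$-nondegeneracy once the target already sits in $\C^n\times\R$, so no exceptional-case exclusion arises), and then conclude $G\circ\Phi=\operatorname{Id}$ near $\widetilde p$ via the identity principle on the generic submanifold $\widetilde M$.

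The paper's proof reaches the same conclusion by a somewhat different, more global route. Instead of extending $\varphi$ locally, the paper first invokes Lemma~\ref{lemma:onetoonea} to obtain that $F$ is one-to-one on all of $\overline{\Omega}$; then $F(\Omega)$ is itself a bounded domain in $\C^n\times\R$ with connected real-analytic boundary $F(\partial\Omega)$ having only $A$-nondegenerate CR singularities, so the global extension result of \cite{crext2} (Lemma~\ref{lemma:extension} applied to $F(\Omega)$) extends $F^{-1}|_{F(\partial\Omega)}$ to a real-analytic CR map on $F(\overline{\Omega})$, which by uniqueness must equal $F^{-1}$. Differentiability of $F^{-1}$ at $F(p)$ then gives the conclusion. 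Your local approach avoids having to check that $F(\Omega)$ satisfies the hypotheses of Lemma~\ref{lemma:extension}; the paper's global approach, once injectivity is in hand, is slightly cleaner at the end and also makes the detour through non-minimality unnecessary (for a submanifold of $\C^n\times\R$ this is automatic anyway).
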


\begin{proof}
The proof follows in the same way as Corollary~\ref{cor:main},
but here we already have a flat image, and $F$ has an inverse.

By Proposition~\ref{prop:imagenotnondegenerate},
$F$ takes CR points to CR points, so we use Lemma~\ref{lemma:onetoonea}. 
Therefore, $F$ is one-to-one on $\overline{\Omega}$. 
The restriction of $F^{-1}$ to $F(\partial \Omega)$ is a 
real-analytic map that is CR at
CR points since $F|_{\partial \Omega}$ is an embedding and 
both Proposition~\ref{prop:CRdiffeo} and
Corollary~\ref{cor:imageofnondegeneratenotCR} apply.
Therefore,
as before (using results of \cite{crext2})
$F^{-1}|_{F(\partial \Omega)}$ extends to a real-analytic
CR map on $F(\overline{\Omega})$.
As the extension
is unique, it must equal $F^{-1}$ on $F(\overline{\Omega})$.  

Now let $p\in\partial{\Omega}$ be a CR singular point. Then $F^{-1}$ is differentiable at $F(p)$,  so
the derivative of $F$ at $p$ is nonsingular.
\end{proof}

Theorem~\ref{thm:flat} follows from the preceding results: Let $n\geq 2$, 
and let $\Omega \subset \C^n \times \R$ 
be a bounded domain with connected real-analytic boundary
such that $\partial \Omega$ has only $A$-nondegenerate CR singularities.
Let $f\colon \partial \Omega \to \C^n \times \R$ be a real-analytic
embedding that is CR at CR points of $\partial \Omega$.
It follows from Lemma~\ref{lemma:extension} that there exists
a real-analytic CR map $F \colon \overline{\Omega} \to \C^{n+1}$
such that $F|_{\partial \Omega} = f$. By Lemma~\ref{lemma:mapstoflat}, $F$ maps into
$\C^n \times \R$. 

To prove part (a) of the theorem, assume that $f$ takes CR points
of $\partial \Omega$ to CR points of $f(\partial \Omega)$. By Theorem~\ref{thm:main},
$F|_{\overline{\Omega}\setminus \Sigma}$ is an immersion. (Here
 $\Sigma \subset \partial \Omega$ is the set of CR singularities of $\partial \Omega$.)
 By  Lemma~\ref{lemma:onetoonea},
$F$ is one-to-one on $\overline\Omega$.

To prove part (b), assume that $f(\partial\Omega)$ has only  
$A$-nondegenerate CR singularities.
By Proposition~\ref{prop:imagenotnondegenerate},
$f$ takes CR points to CR points, so we apply (a) to get that $F$ is one-to-one
and that $F|_{\overline{\Omega}\setminus \Sigma}$ is an immersion. 
By Lemma~\ref{lemma:embedatsing}, $F$ is an immersion on $\Sigma$ as well.

\begin{proof}[Proof of Corollary~\ref{cor:auto}]
Since $f(\partial \Omega) = \partial \Omega$, the image has only
$A$-nondegenerate singularities.  By part (b) of Theorem~\ref{thm:flat},  the extension $F$ is one-to-one and is an embedding on $\overline{\Omega}$. Now use Proposition~\ref{prop:equivcr}.
\end{proof}

\section{Flat case in CR dimension 1} \label{section:flatn1}

Let us now prove Theorem~\ref{thm:flatn1}. In outline the argument is similar to that in the preceding section. First, though, we prove the existence of an extension and list some of the topological consequences of the ellipticity assumption. 

\begin{lemma} \label{lemma:extension1}
Let $\Omega \subset \C \times \R$ be a bounded domain with real-analytic boundary
such that $\partial \Omega$ has only elliptic  CR singularities.
Let $f\colon \partial \Omega \to \C \times \R$ be a real-analytic
map.
Suppose for every $c \in \R$ such that $\Omega \cap \{ s = c \}$ is nonempty,
there exists a continuous map on $\overline{\Omega} \cap \{ s = c \}$,
holomorphic on $\Omega \cap \{ s = c \}$,
extending $f|_{\partial \Omega \cap \{ s = c \}}$.

Then there exists a real-analytic CR map
$F \colon \overline{\Omega} \to \C \times \R$
such that $F|_{\partial \Omega} = f$. 
\end{lemma}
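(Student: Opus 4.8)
The plan is to construct $F$ leaf by leaf from the given extensions and then verify that the resulting map is real-analytic on $\overline{\Omega}$; essentially all the work is at the CR singularities.

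\emph{Construction.} For each $c$ with $\Omega_{(c)}\neq\emptyset$ let $g_c$ be the given continuous map on $\overline{\Omega}\cap\{s=c\}$, holomorphic on $\Omega_{(c)}$, extending $f|_{\partial\Omega\cap\{s=c\}}$. Its first component is holomorphic on $\Omega_{(c)}$ and is therefore determined by its boundary values by the maximum principle, so $g_c$ is unique. Being holomorphic with real-valued last component, $g_c$ has last component constant on each component of $\Omega_{(c)}$; hence $f_2$ (the last component of $f=(f_1,f_2)$) is locally constant on each $\partial\Omega\cap\{s=c\}$, and $F(z,s):=g_s(z)$ is a well-defined map $F\colon\overline{\Omega}\to\C\times\R$ with $F|_{\partial\Omega}=f$ whose first component is holomorphic along every leaf and whose last component depends only on $s$; in particular $F|_\Omega$ is CR. Since an elliptic CR singularity is $Q$-nondegenerate, $\Sigma$ is finite by Proposition~\ref{prop:sizeofCRsing}(a) together with compactness, hence so is $\pi_\R(\Sigma)$. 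It remains only to prove that $F$ is real-analytic on $\overline{\Omega}$.

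\emph{Regularity away from $\Sigma$.} Near a CR point $p\in\partial\Omega$ write $\partial\Omega$ locally as $\{s=\rho(z,\bar z)\}$; since $\rho$ is real-valued, $\nabla\rho(p)\neq0$ forces $\rho_{\bar z}(p)\neq0$, so the equation $\rho(z,w)=c$ (with $w$ a complex variable standing in for $\bar z$) solves for $w=\beta(z,c)$, with $\beta$ holomorphic in $z$ and real-analytic in $c$. On the arc $\partial\Omega_{(c)}$ one has $F_1=f_1(z,\beta(z,c),c)$, which is holomorphic in $z$, real-analytic in $c$, and defined on both sides of the arc; by the usual one-sided uniqueness it agrees with $F_1(\cdot,c)$ on the $\Omega$-side, so $F$ extends real-analytically past $\partial\Omega\setminus\Sigma$. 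Joint real-analyticity of $F$ on $\Omega$ itself then follows by representing $F_1(\cdot,s)$ through a Cauchy integral over $\partial\Omega_{(s)}$ (pushed slightly into $\Omega$ near the finitely many singular points), with contour and integrand varying real-analytically with $s$.

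\emph{Regularity at an elliptic singularity $q$ --- the crux.} After a linear change of the $(z,s)$-coordinates, $\partial\Omega$ near $q=0$ reads $s=\sabs{z}^2+\lambda(z^2+\bar z^2)+O(\snorm{z}^3)$, $0\le\lambda<\tfrac12$, and (in the configuration in which the leaves degenerate to $q$) $\Omega_{(s)}$ is, for small $s>0$, a topological disc about $0$ shrinking to $q$ as $s\downarrow0$. By the maximum principle $\sabs{F_1(z,s)-f_1(q)}\le\max_{\partial\Omega_{(s)}}\sabs{f_1-f_1(q)}\to0$, so $F$ is continuous at $q$. Introduce the rescaled variables $z=t\zeta$, $s=t^2$: by the implicit function theorem the equation $t^{-2}\rho(t\zeta,t\bar\zeta)=1$ has a real-analytic solution $\zeta=e(\theta,t)$ parametrizing $\partial\Omega_{(t^2)}$, with $e(\cdot,0)$ the limiting ellipse, and the Cauchy integral for $F_1$ becomes, after the factor $t$ cancels,
\[
H(\zeta,t)=\frac{1}{2\pi i}\int_0^{2\pi}\frac{f_1\bigl(te(\theta,t),\overline{te(\theta,t)},t^2\bigr)}{e(\theta,t)-\zeta}\,e_\theta(\theta,t)\,d\theta .
\]
Since the contour is compact and the integrand real-analytic, $H$ is holomorphic in $(\zeta,t)$ near $(\zeta_0,0)$ for every $\zeta_0$ interior to the limiting ellipse; thus $H(\zeta,t)=\sum_{j,k\ge0}c_{jk}\zeta^j t^k$ converges near $(0,0)$ and $F_1(z,s)=H(z/\sqrt s,\sqrt s)=\sum c_{jk}z^j s^{(k-j)/2}$ on $\overline{\Omega}$ near $q$. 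The same parametrization is valid for $t=-\sqrt s$ with the same orientation, so $H(\zeta,t)=H(-\zeta,-t)$ and hence $c_{jk}=0$ unless $j\equiv k\pmod2$. Finally, the standing hypothesis --- that the leafwise holomorphic extension exists for \emph{every} nearby leaf --- forces the coefficient $\mathcal H_\ell(\zeta)=\sum_j c_{j\ell}\zeta^j$ of $t^\ell$ in $H$ to be the holomorphic extension, to the interior of the limiting ellipse, of a trigonometric polynomial of degree $\le\ell$ in the ellipse parameter, and such an extension, when it exists, is a polynomial of degree $\le\ell$ in $\zeta$; hence $c_{jk}=0$ for $j>k$, and $F_1(z,s)=\sum_{0\le j\le k,\ j\equiv k}c_{jk}\,z^j s^{(k-j)/2}$ is an honest convergent power series in $(z,s)$. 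Thus $F$ is real-analytic at $q$, and combining the three cases $F$ is the desired real-analytic CR map with $F|_{\partial\Omega}=f$. The main obstacle is precisely this last case: the leaves degenerate at $q$, the natural rescaling $t=\sqrt s$ is not smooth, and it takes both the single-valuedness of $F_1$ and the full force of the hypothesis to rule out ``polar'' monomials $\zeta^j t^k$ with $k<j$ in the rescaled series $H$.
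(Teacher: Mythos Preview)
The paper's proof is a two-line citation: the main theorem of \cite{crext1} gives the real-analytic CR extension $F\colon\overline\Omega\to\C^2$ directly from the leafwise hypothesis, and Lemma~\ref{lemma:mapstoflat} then forces $F(\overline\Omega)\subset\C\times\R$.  You instead attempt a self-contained proof, effectively trying to reprove the main result of \cite{crext1}.  Much of your argument is sound --- the leafwise construction, the Schwarz-reflection argument at CR points (though you should use a general defining function $\Phi(z,\bar z,s)=0$ rather than a graph $s=\rho(z,\bar z)$, since the latter need not exist at CR points where $\partial\Omega$ is vertical), and the rescaling $z=t\zeta,\ s=t^2$ with Cauchy integral at the singularity are all natural and correct.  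The parity $H(\zeta,t)=H(-\zeta,-t)$ is also right.

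The gap is in the decisive step $c_{jk}=0$ for $j>k$.  You assert that the boundary value of $\mathcal H_\ell$ on the limiting ellipse is ``a trigonometric polynomial of degree $\le\ell$'' and that this forces $\mathcal H_\ell$ to be a polynomial of degree $\le\ell$.  The second implication is true (a holomorphic function inside the ellipse whose boundary trace is a polynomial of degree $\le\ell$ in $\zeta,\bar\zeta$ differs from a degree-$\le\ell$ polynomial in $\zeta$ by an antiholomorphic function, hence by a constant), but you do not prove the first, and it is not clear as stated once $\rho$ has cubic or higher terms.  In that case the rescaled boundary $e(\theta,t)=e_0(\theta)+te_1(\theta)+\cdots$ has, e.g., $e_1=-\tfrac12\rho_3(e_0,\bar e_0)\,e_0$ under a natural normalization, which already has trig degree $4$, not $2$; so the naive $t^\ell$-coefficient of $f_1(te,\overline{te},t^2)$ can have trig degree exceeding $\ell$.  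One must show that the excess degrees cancel against the correction terms $\mathcal H_{\ell-1}'(e_0)e_1+\cdots$ coming from expanding the left side $H(e(\theta,t),t)$; this is exactly the inductive analysis that makes the argument in \cite{crext1} nontrivial, and your sketch does not supply it.  So while your outline is in the right spirit, the crux is missing precisely the content that the paper is citing.
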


\begin{proof}
Note that the condition on $f$ is precisely what is required in the
main result of \cite{crext1} to obtain an extension $F \colon
\overline{\Omega} \to \C^{2}$ that is real-analytic
and CR.   By Lemma~\ref{lemma:mapstoflat}, $F$ maps into
$\C \times \R$. 
\end{proof}

One immediate consequence of the following result is that the boundary is connected.

\begin{lemma}\label{lemma:topologyOmega}
Suppose $\Omega \subset \C^n \times \R$, $n \geq 1$, is
a bounded domain with smooth boundary such that
$\partial \Omega$ has only elliptic CR singularities.

Then
$\partial \Omega$ is homeomorphic to a sphere, 
$\partial \Omega$ has exactly two CR singularities,
corresponding to the minimum and the maximum value of $s$
on $\partial \Omega$,
and for each $s \in \R$, $\Omega_{(s)}$ is either empty
or a bounded domain with
smooth connected boundary.
In particular, if $\Omega_{(s)}$ is nonempty, then
no point in $\partial \Omega_{(s)}$ corresponds to a CR singularity
of $\partial \Omega$.
\end{lemma}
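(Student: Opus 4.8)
The plan is to study the smooth function $g := s|_{\partial\Omega}$, the restriction of the $s$-coordinate, by Morse theory. The first point is that $\operatorname{Crit}(g) = \Sigma$: near $p \in \partial\Omega$ the boundary is tangent to the leaf $\{s = s(p)\}$ exactly when, writing $\partial\Omega$ locally as a graph $s = \rho(z,\bar z)$, one has $d\rho(p) = 0$, which is precisely $dg(p)=0$. So the CR singularities are exactly the critical points of $g$.

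Next comes the key computation. Near an elliptic CR singularity, after a holomorphic change of coordinates and possibly replacing $s$ by $-s$, the boundary has the normal form $s = \sum_{j=1}^{n}\bigl(\sabs{z_j}^2 + \lambda_j(z_j^2 + \bar z_j^2)\bigr) + O(\snorm{z}^3)$ with $0 \le \lambda_j < \tfrac12$. Writing $z_j = x_j + iy_j$ and differentiating twice at the singularity, the Hessian of $g$ is the diagonal matrix with entries $2(1+2\lambda_j)$ in the $x_j$ and $2(1-2\lambda_j)$ in the $y_j$; since $0 \le \lambda_j < \tfrac12$ every entry is strictly positive, so the Hessian is definite. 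Hence every CR singularity is a nondegenerate critical point of $g$ of Morse index $0$ or $2n = \dim \partial\Omega$, and $g$ is a Morse function on the closed manifold $\partial\Omega$ all of whose critical points have index $0$ or $2n$. (The compactness clause in the definition of ellipticity is automatic from this.)

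With this I would run the standard sublevel-set argument on each connected component $N$ of $\partial\Omega$: as $g|_N$ has no critical point of index $1,\dots,2n-1$ and $2n\ge 2$, passing a critical value at an index-$0$ point adds one component to $\{g\le c\}\cap N$, passing an index-$2n$ point glues in a top cell (capping a component off without changing their number), and regular values do nothing; since $\{g\le c\}\cap N$ is empty for $c\ll 0$ and equals the connected set $N$ for $c\gg 0$, the count is nondecreasing and ends at $1$, forcing exactly one index-$0$ point, and the same applied to $-g$ gives exactly one index-$2n$ point. Thus $g|_N$ has exactly two critical points, so by Reeb's theorem $N$ is homeomorphic to $S^{2n}$, the two CR singularities on $N$ being the unique minimum and maximum of $s$ on $N$. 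Granting (see below) that $\partial\Omega$ has a single component, its only CR singularities occur at $m := \min_{\overline\Omega} s$ and $M := \max_{\overline\Omega} s$, which are attained on $\partial\Omega$ (an interior extremum would put a leaf-neighbourhood of $\overline\Omega$ on both sides of the extremal value) and at CR singularities. Then every $c \in (m,M)$ is a regular value of $g$, so $\partial\Omega\cap\{s=c\}$, being the boundary of the ball $\{g\le c\}$, is the connected sphere $S^{2n-1}$; hence $\overline\Omega\cap\{s=c\}$ is a compact $2n$-manifold-with-boundary in the leaf $\C^n$ whose topological frontier is that connected sphere, so it is connected (each of its components meets the frontier, and a connected sphere is not the disjoint union of the nonempty frontiers of several components). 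Thus $\Omega_{(c)}$ is a bounded domain with smooth connected boundary, empty for $c\notin(m,M)$, and no point of $\partial\Omega_{(c)}$ corresponds to a CR singularity since all of those lie on the levels $s=m,M\notin(m,M)$.

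The genuinely delicate point — and the main obstacle — is reducing $\partial\Omega$ to one sphere, equivalently excluding "nested" boundary components ($\C^n\times\R\setminus\overline\Omega$ having no bounded component). Here one must use, beyond the Hessian computation, that at each elliptic CR singularity the domain $\Omega$ lies on the "concave" side $\{s > (\text{positive-definite quadratic})\}$ in the appropriate $\pm s$; this makes every CR singularity a strict local minimum or maximum of $s|_{\overline\Omega}$, so the function $s|_{\overline\Omega}$ on the manifold-with-boundary $\overline\Omega$ has no interior critical points and no "saddle/handle" boundary critical points. Repeating the sublevel-set count on $\overline\Omega$ — where $\#\{\text{components of }\{s\le c\}\}$ is again nondecreasing, since components can only be born at local minima and capped off at local maxima, never merged — forces a single local minimum of $s|_{\overline\Omega}$; as the minimum of $s$ over each boundary component is such a local minimum and distinct components are disjoint, there is only one boundary component. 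I expect the real work to lie in verifying carefully that the elliptic normal form does pin down which side $\Omega$ sits on, and that no boundary critical point of $s|_{\overline\Omega}$ can produce a merge of sublevel-set components.
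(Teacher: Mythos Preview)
The paper does not prove this lemma here; its entire proof is a one-line pointer to Proposition~3.2 of \cite{crext1}, so there is no in-paper argument to compare yours against.

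Your Morse-theoretic analysis is correct as far as it goes. The identification $\operatorname{Crit}(s|_{\partial\Omega}) = \Sigma$ is right, the Hessian computation at an elliptic point (after a linear change in $z$ and possibly $s\mapsto -s$, which keeps you in $\C^n\times\R$ and preserves the Morse index) is right, and Reeb's theorem then makes every component of $\partial\Omega$ a $2n$-sphere carrying exactly two CR singularities at its $s$-extrema. You also correctly isolate connectedness of $\partial\Omega$ as the one remaining issue.

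The gap is in how you propose to close it. You hope that ``the elliptic normal form does pin down which side $\Omega$ sits on,'' but it does not: ellipticity as defined in this paper is a property of the submanifold $\partial\Omega$ alone and is blind to the side. Concretely, take
\[
\Omega \;=\; \bigl\{\,\snorm{z}^2 + s^2 < 1\,\bigr\}\;\setminus\;\bigl\{\,\snorm{z}^2 + 4s^2 \le \tfrac14\,\bigr\}
\ \subset\ \C^n \times \R .
\]
This is a bounded domain with smooth boundary whose four CR singularities (the poles of the two ellipsoids) are all elliptic in the paper's sense, yet $\partial\Omega$ has two components and $\Omega_{(0)}$ is an annulus with disconnected boundary. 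So, read with the definitions given in \emph{this} paper, the lemma admits a counterexample; presumably Proposition~3.2 of \cite{crext1} carries an extra hypothesis---most plausibly exactly the concavity condition you wanted to derive, that $\Omega$ lies locally on the side $\{s>\rho\}$ (resp.\ $\{s<\rho\}$) at each local minimum (resp.\ maximum) of $s|_{\partial\Omega}$, equivalently that $\C^n\times\R\setminus\overline\Omega$ is connected---which is tacit here. With that condition granted, your sublevel-set argument on $\overline\Omega$ is the right way to finish; without it, no argument can succeed.
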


\begin{proof}
See Proposition~3.2 in \cite{crext1}.
\end{proof}

The following classical result follows from the argument principle.

\begin{lemma} \label{lemma:onetooneonboundaryn1}
Suppose $U \subset \C$ is a bounded domain with smooth connected boundary,
$\overline{U} \subset W$ for an open set $W$, $\Phi \colon W \to \C$ is
holomorphic, and $\Phi|_{\partial U}$
is one-to-one.
Then $\Phi|_{\overline{U}}$ is one-to-one.
\end{lemma}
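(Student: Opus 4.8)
The plan is to combine the argument principle with the Jordan curve theorem. First I would record the relevant topology: since $U$ is a bounded domain with smooth connected boundary, $\partial U$ is a compact connected smooth $1$-manifold without boundary, hence diffeomorphic to a circle, and $U$ is the bounded component of $\C \setminus \partial U$. Next, $\Phi$ cannot be constant on the component of $W$ containing $\overline{U}$, since otherwise $\Phi|_{\partial U}$ would be constant, contradicting injectivity (as $\partial U$ has more than one point); so on that component $\Phi$ is a nonconstant holomorphic function and in particular an open map. Because $\Phi|_{\partial U}$ is a continuous injection of a circle into $\C$, its image $\Gamma := \Phi(\partial U)$ is a Jordan curve; write $V$ for the bounded component of $\C \setminus \Gamma$ and $V_\infty$ for the unbounded one.

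Then I would apply the argument principle on $\partial U$, oriented as the boundary of $U$: for each $w \notin \Gamma$, the number $N(w)$ of solutions of $\Phi(z) = w$ in $U$, counted with multiplicity, equals the winding number of $\Gamma$ about $w$. That winding number is $0$ for $w \in V_\infty$ and is $\pm 1$ for $w \in V$; since $N(w)$ is a nonnegative integer, it must be $1$ on $V$ and $0$ on $V_\infty$. Hence every point of $V$ has exactly one preimage in $U$, so $\Phi|_U$ is injective with image containing $V$, while $\Phi(U) \cap V_\infty = \emptyset$. Finally $\Phi(U)$ cannot meet $\Gamma$ either: if $\Phi(z_0) \in \Gamma$ for some $z_0 \in U$, openness of $\Phi$ would force $\Phi(U)$ to contain a neighborhood of $\Phi(z_0)$, which necessarily meets $V_\infty$, a contradiction. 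Therefore $\Phi(U) = V$, and this set is disjoint from $\Gamma = \Phi(\partial U)$.

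The conclusion follows by a short case analysis. Suppose $\Phi(z_1) = \Phi(z_2)$ with $z_1, z_2 \in \overline{U}$ and $z_1 \ne z_2$. They cannot both lie in $U$, by injectivity of $\Phi|_U$; they cannot both lie in $\partial U$, by hypothesis; and they cannot be split between $U$ and $\partial U$, since then their common value would lie in $\Phi(U) \cap \Phi(\partial U) = V \cap \Gamma = \emptyset$. This contradiction shows $\Phi|_{\overline{U}}$ is one-to-one. I do not expect a genuine obstacle here; the only points needing care are the sign in the argument principle (resolved by $N(w) \ge 0$, which pins the winding number to $1$ on $V$) and the verification that $\Phi(U)$ avoids $\Gamma$, for which the openness of the nonconstant holomorphic $\Phi$ is exactly the right tool.
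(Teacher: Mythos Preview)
Your proof is correct and is precisely the standard argument-principle proof the paper has in mind; the paper itself does not give details, remarking only that the result ``follows from the argument principle.'' Your careful handling of the sign via $N(w)\ge 0$ and the use of openness to exclude $\Phi(U)\cap\Gamma$ are exactly the right points of care.
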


The hypothesis of connected
boundary is necessary:
Consider $z \mapsto z + \frac{1}{z}$ defined on an
annulus $U$ centered at zero with inner radius $r < 1$ and
outer radius $R > 1$, where $\frac{1}{R} \not= r$.

\begin{lemma} \label{lemma:onetoonen1}
Suppose $\Omega \subset \C \times \R$
is a bounded domain with 
real-analytic boundary
such that $\partial \Omega$ has only elliptic CR singularities.
Suppose $F \colon \overline{\Omega} \to \C \times \R$ is a real-analytic CR
map such that $F|_{\partial \Omega}$ is a real-analytic
embedding that takes CR points of $\partial \Omega$ to
CR points of $F(\partial \Omega)$.
Then $F$ is one-to-one on $ \overline{\Omega}$.
\end{lemma}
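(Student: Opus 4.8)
The plan is to follow the leaf-by-leaf strategy used in Lemma~\ref{lemma:onetoonea}, but using the $n=1$ tools in place of the $n\ge 2$ ones: Lemma~\ref{lemma:onetooneonboundaryn1} (argument principle) instead of Lemma~\ref{lemma:onetooneonboundarya} (Hartogs), and Lemma~\ref{lemma:topologyOmega} instead of Proposition~\ref{prop:sizeofCRsing}. First I would note that $F$ is CR, so by Proposition~\ref{prop:flatCRcomplexify} it extends holomorphically past $\overline\Omega$; in particular $F$ is holomorphic in $z$ on each leaf, and writing $F=(F_1,F_2)$, Lemma~\ref{lemma:mapstoflat} gives that $F_2$ is real-valued and depends only on $s$. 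Next, by Lemma~\ref{lemma:topologyOmega}, $\partial\Omega$ has exactly two CR singularities, which occur at the extreme values $s_{\min},s_{\max}$ of $s$ on $\partial\Omega$, so $\pi_\R(\Sigma)=\{s_{\min},s_{\max}\}$; and for every $s$ with $\Omega_{(s)}\ne\emptyset$, the leaf $\Omega_{(s)}$ is a bounded domain with smooth \emph{connected} boundary, with no point of $\partial\Omega_{(s)}$ corresponding to a CR singularity.

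The main computation is then the per-leaf injectivity. Fix $s$ with $\Omega_{(s)}\ne\emptyset$; then $\partial\Omega_{(s)}$ consists entirely of CR points of $\partial\Omega$, so $f=F|_{\partial\Omega}$ is a CR embedding there mapping these CR points to CR points of $f(\partial\Omega)$. I would argue that the holomorphic map $\Phi_s(z)=F_1(z,s)$ is one-to-one on $\partial\Omega_{(s)}$: indeed $F|_{\partial\Omega}$ is injective and $F_2$ depends only on $s$, so on a single leaf $F$ injective forces $F_1$ injective. Since $\partial\Omega_{(s)}$ is connected, Lemma~\ref{lemma:onetooneonboundaryn1} gives that $\Phi_s$ is one-to-one on $\overline{\Omega_{(s)}}$. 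Thus $F(z,s)=F(z',s)$ implies $z=z'$ for \emph{every} admissible $s$ (including $s=s_{\min},s_{\max}$, where the leaf may be a single point, in which case injectivity is trivial) --- this is cleaner than in the $n\ge2$ case, where one had to throw away the exceptional leaves and recover them by a Hurwitz-type limiting argument; here the connectedness of every leaf boundary lets Lemma~\ref{lemma:onetooneonboundaryn1} apply directly.

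It remains to rule out collisions between different leaves: suppose $(z,s_1)$ and $(z',s_2)$ with $s_1\ne s_2$ but $F(z,s_1)=F(z',s_2)$. Since $F_2$ depends only on $s$, this forces $F_2(s_1)=F_2(s_2)$, so by the mean value theorem there is $s_0$ strictly between $s_1$ and $s_2$ with $F_2'(s_0)=0$. Because $s_1,s_2$ lie in the interval $(s_{\min},s_{\max})$ (both leaves are nonempty), $s_0\in(s_{\min},s_{\max})$ as well, so $\Omega_{(s_0)}\ne\emptyset$ and $\partial\Omega_{(s_0)}$ is a genuine curve of CR points. But $F_2'(s_0)=0$ means the image $F(\partial\Omega)$ is tangent to the leaf $\{\sigma=F_2(s_0)\}$ at every point $F(z'',s_0)$ with $(z'',s_0)\in\partial\Omega$; since $F|_{\partial\Omega}$ is a diffeomorphism onto its image, each such $F(z'',s_0)$ is then a CR singularity of $F(\partial\Omega)$, giving a one-real-parameter family of CR singularities. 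Since $F$ takes CR points to CR points, these CR singularities lie in $F(\Sigma)$, but $\Sigma$ consists of just two points by Lemma~\ref{lemma:topologyOmega} --- a contradiction. Hence $s_1=s_2$, and combined with the per-leaf result, $F$ is one-to-one on $\overline\Omega$. \textbf{The main obstacle} I anticipate is the bookkeeping at the two extreme leaves $s_{\min},s_{\max}$: there $\Omega_{(s)}$ degenerates to a point (the CR singularity), so one must check that the argument-principle step and the mean-value-theorem step are either vacuous or still valid there; but since no \emph{pair} of distinct interior points can sit over such a leaf, and the cross-leaf argument only ever needs $s_0$ strictly interior, this is a minor point rather than a real difficulty.
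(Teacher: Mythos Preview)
Your proposal is correct and follows essentially the same approach as the paper's proof: split $F=(F_1,F_2)$ with $F_2$ depending only on $s$ via Lemma~\ref{lemma:mapstoflat}, use Lemma~\ref{lemma:topologyOmega} to ensure every nonempty leaf has connected boundary free of CR singularities, apply Lemma~\ref{lemma:onetooneonboundaryn1} per leaf, and then rule out cross-leaf collisions by the mean value theorem on $F_2$ to produce a whole curve of CR singular images contradicting that $\Sigma$ has only two points. Your additional remark that $s_0$ is strictly interior (so $\partial\Omega_{(s_0)}$ is a genuine curve) is a useful clarification the paper leaves implicit; the one small slip---asserting $s_1,s_2\in(s_{\min},s_{\max})$ rather than $[s_{\min},s_{\max}]$---is harmless, since $s_0$ strictly between them lands in the open interval regardless.
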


\begin{proof}
The proof is similar to that of Lemma~\ref{lemma:onetoonea}.
We again write 
$F = (F_1,F_2)$ and find via
Lemma~\ref{lemma:mapstoflat} that the real-valued function $F_2$ depends on $s$ only. 

Lemma~\ref{lemma:topologyOmega} says that
if $\Omega_{(s)}$ is nonempty then
$\Omega_{(s)}$ has a smooth connected boundary
and $\partial \Omega_{(s)}$ has no points that correspond to CR
singularities
of $\partial \Omega$.
In particular, if $\Omega_{(s)}$ is nonempty,
we apply
Lemma~\ref{lemma:onetooneonboundaryn1}
and conclude that 
$F_1$, and hence $F$,
is one-to-one when
restricted to a leaf.
That is, if $F(z,s) = F(z',s)$ then $z=z'$.

Recall that $\partial \Omega$ has exactly two CR
singularities and these correspond to
the extremal values of $s$. Assume for a contradiction that there exist $(z,s_1)$ and $(z',s_2)$
such that $s_1\neq s_2$ but $F(z,s_1) = F(z',s_2)$.
By the mean value theorem applied to $F_2$ there exists 
$s_0$ between $s_1$ and $s_2$ such that
$F_2'(s_0) = 0$.  As $F$ restricted to $\partial \Omega$ is a
diffeomorphism, this means that the image of $\partial \Omega$ near every
$(z'',s_0) \in \partial \Omega$ must be tangent to a leaf, that is,
$F(z'',s_0)$ is a CR singular point of $F(\partial \Omega)$.
But as CR points go to CR points, the only CR singular points of
$F(\partial \Omega)$ can be the images of the two poles, which is
a contradiction.
\end{proof}

In Theorem~\ref{thm:flatn1} we assume that $f$ is a
real-analytic embedding that takes CR points of the boundary to CR points. 
By Lemma~\ref{lemma:onetoonen1}, the extension $F$ obtained from Lemma \ref{lemma:extension1} is one-to-one.
Also, Proposition~\ref{prop:MimmersionatCR} 
implies that $F$ is an immersion at CR points of the boundary. 
The fact that $F$ is an immersion inside holds in more generality, so we
state it separately.  It  requires only that $\Omega$ be a bounded domain with real-analytic boundary
and that $\partial \Omega$ and $F(\partial \Omega)$ have
isolated CR singularities. (Proposition~\ref{prop:sizeofCRsing} 
implies that elliptic singularities
are always isolated.)

\begin{lemma}
Let $\Omega \subset \C \times \R$ be a bounded domain with real-analytic boundary
such that $\partial \Omega$ has isolated (hence finitely many) CR singularities.
Suppose that $F \colon \overline{\Omega} \to \C \times \R$
is a real-analytic CR map such that $F|_{\partial \Omega}$ is a
real-analytic embedding and $F(\partial \Omega)$
has only isolated CR singularities.
Then $F|_{\Omega}$ is an immersion.
\end{lemma}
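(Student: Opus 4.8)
The plan is to complexify $F$ and reduce the immersion condition to two separate non-vanishing statements, one in the leaf direction and one along each leaf. By Proposition~\ref{prop:flatCRcomplexify}, $F$ extends to a holomorphic map $G=(G_1,G_2)$ on a connected neighborhood $\Upsilon$ of $\overline\Omega$ in $\C^2$, and by Lemma~\ref{lemma:mapstoflat} the last component $G_2$ depends only on the second variable $w$; set $F_2:=G_2|_\R$. Writing the real derivative of $F$ in coordinates $(z,s)$ and using $\partial G_1/\partial\bar z=0$, one sees that $F|_\Omega$ is an immersion at $(z,s)\in\Omega$ if and only if $\det DG(z,s)=\frac{\partial G_1}{\partial z}(z,s)\,G_2'(s)\neq 0$, i.e. if and only if $\frac{\partial G_1}{\partial z}(z,s)\neq 0$ and $F_2'(s)\neq 0$. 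So it suffices to prove these two facts for every $(z,s)\in\Omega$.

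First I would show $F_2'(s)\neq 0$ whenever $\Omega_{(s)}\neq\emptyset$. Note $F_2$ is not constant: otherwise $F(\partial\Omega)$ would be a compact two-dimensional submanifold contained in a single leaf $\C\times\{c\}$, hence an open subset of $\C$, which is impossible. Now suppose $F_2'(c)=0$ with $\Omega_{(c)}\neq\emptyset$. Then $\partial\big(\Omega_{(c)}\big)$ has dimension at least one, so, $\partial\Omega$ having only finitely many CR singularities, there is a one-dimensional set $S$ of CR points $p=(z_0,c)$ of $\partial\Omega$. At such $p$, parametrize $\partial\Omega$ near $p$ by the leaf parameter $s'$ and an arc parameter $\theta$ along the leaf; since $F_2'(c)=0$, both $\partial_\theta F$ and $\partial_{s'}F$ at $p$ lie in $\C\times\{0\}$, and they are independent because $f:=F|_{\partial\Omega}$ is an immersion, so $T_{f(p)}f(\partial\Omega)=\C\times\{0\}$ is a complex line. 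If $f(\partial\Omega)$ were locally a complex curve near $f(p)$, then, as the $\theta$-slice through $f(p)$ has constant last coordinate but non-constant first coordinate, the last coordinate would be constant on that curve, forcing $F_2$ constant near $c$ and hence everywhere, a contradiction; therefore $f(p)$ is a CR singular point of $f(\partial\Omega)$. Letting $p$ range over $S$ and using that $f$ is an embedding gives a one-dimensional set of CR singularities of $f(\partial\Omega)$, contradicting the hypothesis. Hence $F_2'\neq 0$ on $\pi_\R(\Omega)$.

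It remains to show $\frac{\partial G_1}{\partial z}\neq 0$ on $\Omega$. Let $B_1=\{\frac{\partial G_1}{\partial z}=0\}$, a proper complex hypersurface in $\Upsilon$ (if $\frac{\partial G_1}{\partial z}\equiv 0$ then $G_1$, hence $F$, is independent of $z$ and not injective on $\partial\Omega$). At a CR point of $\partial\Omega$ the leaf-direction tangent vector of $f$ is $\big(\frac{\partial G_1}{\partial z}\gamma_\theta,0\big)$, so $f$ being an immersion forces $\frac{\partial G_1}{\partial z}\neq 0$ there; thus $B_1\cap\partial\Omega\subset\Sigma$, a finite set. Now argue leaf by leaf: for $s\notin\pi_\R(\Sigma)$ with $\Omega_{(s)}\neq\emptyset$, the leaf $\Omega_{(s)}$ is a bounded domain with smooth boundary $\partial\Omega_{(s)}=(\partial\Omega)_{(s)}$, $G_1(\cdot,s)$ is holomorphic near $\overline{\Omega_{(s)}}$, and $G_1(\cdot,s)$ is one-to-one on $\partial\Omega_{(s)}$ (if $G_1(z,s)=G_1(z',s)$ on the boundary then, since $F_2$ is one-to-one, $F(z,s)=F(z',s)$, contradicting that $f$ is an embedding). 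On a component of $\Omega_{(s)}$ with connected boundary, the argument principle (each value of $G_1(\cdot,s)$ attained with total winding number $0$ or $1$) gives that $G_1(\cdot,s)$ is injective there with nowhere-vanishing derivative, so $B_1\cap\Omega_{(s)}=\emptyset$; and then analyticity propagates this to the finitely many remaining leaves, since if $\frac{\partial G_1}{\partial z}(z_0,c)=0$ with $z_0\in\Omega_{(c)}$ then $B_1$ — not containing the leaf over $c$, as $G_1(\cdot,c)$ cannot be constant — would meet $\Omega_{(s)}$ for all real $s$ near $c$. Combining the two non-vanishing statements finishes the proof.

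The step I expect to be the main obstacle is exactly this per-leaf argument: a bounded domain in $\C$ may have disconnected boundary, and an injective-on-the-boundary holomorphic map can then be non-injective with interior critical points (as $z\mapsto z+\tfrac1z$ on an annulus shows), so one must know that the components of the leaves $\Omega_{(s)}$ are simply connected — equivalently, that the complex curve $B_1$, whose only intersection with $\partial\Omega$ is the finite set $\Sigma$, cannot meet the open set $\Omega$ at all. This simple-connectedness is immediate in the elliptic case via Lemma~\ref{lemma:topologyOmega}, which is the situation in Theorem~\ref{thm:flatn1}; in the stated generality it must be deduced from $\partial\Omega$ having only isolated CR singularities, and checking this topological fact is the delicate point, since for $n=1$ the dimension count used in Lemma~\ref{lemma:FimmersioninOmega} (which rests on $2n-1>n$) is unavailable.
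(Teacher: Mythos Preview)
Your reduction to the two non-vanishing statements and your argument for $F_2'\neq 0$ are correct and essentially match the paper (the paper is terser: it finds, for each $s\in I=\pi_\R(\Omega)$, a CR point of $\partial\Omega$ mapping to a CR point of the image and invokes Proposition~\ref{prop:MimmersionatCR}). The difference, and the gap you yourself flag, is in the $\partial G_1/\partial z$ step.

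The obstacle you identify is real for your argument-principle approach, but the paper sidesteps it entirely, so no simple-connectedness of the leaves is needed. Two observations make this work. First, your inclusion $B_1\cap\partial\Omega\subset\Sigma$ is actually an equality $B_1\cap\partial\Omega=\emptyset$: at a CR \emph{singular} point $p$ the tangent space $T_p\partial\Omega$ is exactly $\C\times\{0\}$, and $df$ restricted to it is $v\mapsto\bigl(\tfrac{\partial G_1}{\partial z}(p)\,v,0\bigr)$, so $f$ being an immersion forces $\tfrac{\partial G_1}{\partial z}(p)\neq 0$ there too. Second, with $B_1$ disjoint from $\partial\Omega$, the Hurwitz-type propagation you already use for exceptional leaves works for \emph{all} leaves: the set $J=\{s\in I:\tfrac{\partial G_1}{\partial z}(\cdot,s)$ has a zero in $\Omega_{(s)}\}$ is open in $I$ (Rouch\'e/Hurwitz), and it is closed in $I$ because a limit of interior zeros lies in $\overline\Omega$ but cannot land on $\partial\Omega$. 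Since $I$ is an interval, $J=\emptyset$ or $J=I$; if $J=I$, let $s\to$ an endpoint of $I$, and the same limiting argument produces a zero of $\tfrac{\partial G_1}{\partial z}$ at a point of $\partial\Omega$ (necessarily a CR singularity, as $s$ is extremal there), contradicting the first observation. This is the paper's route: drop the per-leaf injectivity argument and run Hurwitz globally to the boundary of $I$.
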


\begin{proof}
As before we write $F = (F_1,F_2)$   and find that the real-valued function $F_2$ depends on $s$ only.  
By complexifying we consider $F$ to be a map from a neighborhood of
$\overline{\Omega}$ in $\C^2$ to $\C^2$ that
happens to map $\overline{\Omega}$ to $\C \times \R$.
The complex Jacobian determinant of $F$ is
\begin{equation}
\det DF = \frac{\partial F_1}{\partial z} \frac{\partial F_2}{\partial s} .
\end{equation}

Let $I = \pi_{\R}(\Omega)$ (projection onto the $s$ coordinate)
be the interval consisting of those $s$ that correspond to points
in $\Omega$.
For each $s \in I$, the open set $\Omega_{(s)}$ is nonempty, and hence
its boundary has infinitely many points.  Hence for every $s \in I$
there exists at least one point $(z,s) \in \partial \Omega$ that is
a CR point of $\partial \Omega$ and such that $F(z,s)$ is a CR point
of $F(\partial \Omega)$.
By Proposition~\ref{prop:MimmersionatCR},
$F$ is an immersion at such a point $(z,s)$.
  Therefore,
$\frac{\partial F_2}{\partial s}\neq 0$ for all $s \in I$
(using the fact that $F_2$ does not depend on $z$).
Suppose $F$ is not an immersion at some point $(z_0,s_0)  \in \Omega$.
Then $\frac{\partial F_1}{\partial z}(z_0,s_0) = 0$.  By the Hurwitz theorem
$\frac{\partial F_1}{\partial z}$ must have a zero in $\Omega_{(s)}$ for all
$s \in I$.  Thus there must exist such a point $(z_1,s_1)$ on the boundary
such that $s_1 \in \partial I$.
Then $(z_1,s_1)$ is a CR singular point, and as
$\frac{\partial F_1}{\partial z}(z_1,s_1) = 0$,
this contradicts $f$ being a diffeomorphism.
\end{proof}

Note that the hypotheses are necessary:  For the assumption of boundedness, consider the
function $F(z,s) = \bigl(\sinh(z),s\bigr)$, and let $\Omega = \{ (z,s) : \Im z >
0 \}$.  Then $F$ restricted to $\partial \Omega$ is a diffeomorphism onto
its image, but $\det DF$ is zero whenever $\cosh(z) = 0$.

For the  assumption of isolated CR singularities,
consider the function $F(z,s) = \bigl(z,{(s-1)}^3\bigr)$,
and let $\Omega = \{ (z,s) : s^2+(\sabs{z}^2-1)^3 < 1 \}$.
Then $\partial \Omega$ is a real-analytic submanifold with
elliptic CR singularities at $(0,\pm \sqrt{2})$.
But $\partial \Omega$ has further CR singularities at
all points where $\sabs{z}=1$ and $s = \pm 1$.  The function $F$ restricted
to $\partial \Omega$ is an embedding, and $F$ is one-to-one on $\Omega$, but
$F$ is not an immersion at all points where $s=1$.

Now we prove a version of Lemma~\ref{lemma:embedatsing} that holds for $n=1$.

\begin{lemma} \label{lemma:embedatsingn1}

Suppose that $\Omega \subset \C \times \R$ is a bounded domain with real-analytic boundary
such that $\partial \Omega$ has only elliptic CR singularities. Let 
$F \colon \overline{\Omega} \to \C \times \R$ be a 
real-analytic CR
map such that $F|_{\partial \Omega}$ is a real-analytic
embedding. Assume that $F$ takes CR points of $\partial \Omega$ to
CR points of $F(\partial \Omega)$ and that $F(\partial \Omega)$ has only elliptic 
CR singularities. Then $F$ is an immersion at each CR
singular point of $\partial{\Omega}$.
\end{lemma}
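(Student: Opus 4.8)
The goal is the $n=1$ analogue of Lemma~\ref{lemma:embedatsing}, and the strategy is the same as for Corollary~\ref{cor:main} and Lemma~\ref{lemma:embedatsing}: at a CR singular point $p \in \partial\Omega$, show that $F^{-1}$ extends holomorphically past $F(p)$, so that the chain rule forces $DF(p)$ to be nonsingular. The main difference in the $n=1$ case is that we cannot invoke the higher-dimensional extension machinery of \cite{crext2}; instead we must use the elliptic flattening/extension result of \cite{crext1}, which is exactly the result behind Lemma~\ref{lemma:extension1}. The hypothesis that $F(\partial\Omega)$ has only elliptic CR singularities is what makes this applicable: ellipticity is precisely the condition under which \cite{crext1} gives a holomorphic extension along each leaf of the inverse map.

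\textbf{Step 1: Reduce to a one-to-one $F$ and a well-defined inverse on the boundary.} First I would note that Lemma~\ref{lemma:onetoonen1} applies (its hypotheses are met, since $F$ takes CR points to CR points and $\partial\Omega$ has only elliptic, hence isolated by Proposition~\ref{prop:sizeofCRsing}, CR singularities), so $F$ is one-to-one on $\overline\Omega$. Thus $F^{-1}$ is a well-defined map on $F(\overline\Omega)$, and in particular $\varphi := F^{-1}|_{F(\partial\Omega)}$ is a well-defined real-analytic map on $F(\partial\Omega)$. Since $F|_{\partial\Omega}$ is a real-analytic embedding and $F$ carries CR points to CR points, Proposition~\ref{prop:CRdiffeo} shows $\varphi$ is CR at the CR points of $F(\partial\Omega)$.

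\textbf{Step 2: Extend $\varphi$ across $F(p)$ and identify the extension with $F^{-1}$.} The point $\widetilde p := F(p)$ is a CR singularity of $F(\partial\Omega)$, assumed elliptic. By Lemma~\ref{lemma:topologyOmega} applied to (a domain bounded by) $F(\partial\Omega)$ — or, more directly, by the argument of Lemma~\ref{lemma:extension1} together with the leaf-by-leaf hypothesis, which in the elliptic case is automatic because on each leaf near $\widetilde p$ the boundary slice is a small circle bounding a disc on which the holomorphic extension is just the Cauchy integral — the map $\varphi$ extends to a real-analytic CR map $\Phi$ on a neighborhood of $\widetilde p$ inside $F(\overline\Omega)$, and by Proposition~\ref{prop:flatCRcomplexify} this $\Phi$ complexifies to a holomorphic map on a full neighborhood $U \subset \C^2$ of $\widetilde p$. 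Because $F$ is one-to-one, $\Phi$ agrees with $F^{-1}$ on $U \cap F(V \cap \overline\Omega)$ for a suitable neighborhood $V$ of $p$; here I would use that $F|_{\partial\Omega}$ is an embedding so that $F(\partial\Omega)$ is a $1$-dimensional real-analytic set near $\widetilde p$ through which a holomorphic function vanishing on it must vanish identically, pinning down the extension uniquely (as in Lemma~\ref{lemma:onetooneonboundarya} and the proof of Corollary~\ref{cor:main}).

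\textbf{Step 3: Conclude nonsingularity of $DF(p)$.} Since $\Phi \circ F = \mathrm{Id}$ on $V \cap \overline\Omega$ and both maps are differentiable at $p$ (for $F$) and $\widetilde p$ (for $\Phi$), the chain rule gives $D\Phi(\widetilde p)\, DF(p) = I$, so $DF(p)$ is invertible. As $p$ was an arbitrary CR singular point, $F$ is an immersion at each CR singular point of $\partial\Omega$.

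\textbf{Main obstacle.} The delicate point is Step 2: verifying that the elliptic extension result of \cite{crext1} (or the flattening it provides) genuinely applies to the \emph{inverse} map $\varphi$ near the image singularity $\widetilde p$. One must check that the required leaf-by-leaf holomorphic extension hypothesis for $\varphi$ on $F(\partial\Omega)$ holds — this uses that $F$ maps leaves to leaves (Lemma~\ref{lemma:mapstoflat}), that near an elliptic CR singularity the boundary slices are small Jordan curves bounding discs, and that $F_1$ restricted to each such leaf is a biholomorphism onto its image (which follows from $F$ being one-to-one, an immersion at CR points, and the argument-principle Lemma~\ref{lemma:onetooneonboundaryn1}), so the inverse extends to each image disc by composition. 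Once the leaf-by-leaf extension of $\varphi$ is in hand, real-analyticity up to the CR singularity follows from \cite{crext1} and the rest is the same uniqueness-and-chain-rule argument used repeatedly above.
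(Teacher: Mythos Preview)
Your proposal is correct and follows essentially the same route as the paper: use Lemma~\ref{lemma:onetoonen1} to get injectivity, verify the leaf-by-leaf hypothesis for $F^{-1}|_{F(\partial\Omega)}$ (the paper does this in one line by noting $F^{-1}$ is already continuous on $F(\overline\Omega)$, hence is itself the required extension along each leaf), apply the main result of \cite{crext1} to get a real-analytic CR extension of $F^{-1}$, and conclude via the chain rule. One minor slip: near $\widetilde p$ the set $F(\partial\Omega)$ is a $2$-dimensional (not $1$-dimensional) real-analytic submanifold of $\C^2$, but the uniqueness argument you invoke still holds since it is generic away from the isolated CR singularity.
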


\begin{proof}
 By Lemma~\ref{lemma:onetoonen1}, $F$ is one-to-one on $ \overline{\Omega}$. 
The map $F^{-1}$ is continuous on $F(\overline \Omega)$, so $F^{-1}|_{F(\partial \Omega)}$ extends
continuously along each leaf in $F(\Omega)$ to a holomorphic map.
Because $F(\partial\Omega)$ has only elliptic singularities, by the main result of
\cite{crext1},
$F^{-1}|_{F(\partial \Omega)}$ extends to a real-analytic
CR map on $F(\overline{\Omega})$.
As in the proof of Lemma~\ref{lemma:embedatsing}, it follows that
the derivative of $F$ is nonsingular at each CR singular point.
\end{proof}

Theorem~\ref{thm:flatn1} follows easily from the preceding results.

\medskip

Finally let us prove Corollary~\ref{cor:flatn1}; in particular,
let us prove the following lemma.

\begin{lemma}
Let
$\Omega \subset \C \times \R$ be a bounded domain with
real-analytic boundary
such that $\partial \Omega$ has only elliptic 
CR singularities.
Let $F\colon \overline{\Omega} \to \overline{\Omega}$
be a real-analytic map such that $F|_{\partial \Omega}$ is a diffeomorphism
onto $\partial \Omega$.

Then $F$ takes the CR singularities of $\partial \Omega$ to themselves,
possibly interchanging them (and hence $F$ takes
CR points to CR points).
\end{lemma}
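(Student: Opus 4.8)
The plan is to combine the rigid leaf geometry forced by ellipticity with the injectivity of $F$ on the boundary. First I would extract the relevant topology from Lemma~\ref{lemma:topologyOmega}: since $\partial\Omega$ has only elliptic CR singularities, it is homeomorphic to a sphere and carries \emph{exactly two} CR singularities $p_{\min}$ and $p_{\max}$, occurring respectively at the minimum value $s_{\min}$ and the maximum value $s_{\max}$ of the coordinate $s$ on $\partial\Omega$. For each intermediate height $s_{\min}<c<s_{\max}$ the slice $\partial\Omega\cap\{s=c\}=\partial\Omega_{(c)}$ is a smooth connected curve bounding the nonempty domain $\Omega_{(c)}$, whereas the two extreme slices satisfy $\partial\Omega\cap\{s=s_{\min}\}=\{p_{\min}\}$ and $\partial\Omega\cap\{s=s_{\max}\}=\{p_{\max}\}$. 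Thus the two CR singularities are precisely the two heights at which the $s$-slice of $\partial\Omega$ degenerates to a single point, a description I will use to detect them.

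Next I would observe that $F$ respects the leaf foliation of $\C\times\R$. Writing $F=(F_1,F_2)$, Lemma~\ref{lemma:mapstoflat} gives that $F_2$ is real-valued and depends on $s$ alone, so $F$ carries the leaf $\{s=c\}$ into the leaf $\{s=F_2(c)\}$; in particular $F\bigl(\partial\Omega\cap\{s=c\}\bigr)\subseteq\partial\Omega\cap\{s=F_2(c)\}$ for every $c$, using that $F(\partial\Omega)=\partial\Omega$. This leaf-preservation is the structural heart of the argument, and it is the step I expect to be the main obstacle: it is exactly what forces the image of a degenerate single-point slice to land inside another slice, thereby tying the pole structure of the source to that of the target. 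Everything downstream is then a matter of bookkeeping with the two-point set of poles.

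With leaf-preservation established, the conclusion follows from a short injectivity argument. Since $F|_{\partial\Omega}$ is a diffeomorphism onto $\partial\Omega$, the pole $p_{\max}$ has a unique preimage $q$; let $c_0$ be its height. Because $F(q)=p_{\max}$ lies at height $s_{\max}$, we have $F_2(c_0)=s_{\max}$, so $F$ sends the \emph{entire} slice $\partial\Omega\cap\{s=c_0\}$ into $\partial\Omega\cap\{s=s_{\max}\}=\{p_{\max}\}$, a single point. Injectivity of $F|_{\partial\Omega}$ then forces $\partial\Omega\cap\{s=c_0\}$ to be a single point, which by the first step happens only at the extreme heights; hence $q\in\{p_{\min},p_{\max}\}$ is itself a CR singularity. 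Running the identical argument for $p_{\min}$ shows its preimage is a CR singularity as well, so the injective map $F|_{\partial\Omega}$ carries the two-point set $\{p_{\min},p_{\max}\}$ bijectively onto itself, possibly interchanging the two points. Since the CR points of $\partial\Omega$ form exactly the complement of $\{p_{\min},p_{\max}\}$, it follows that $F$ takes CR points to CR points, as claimed.
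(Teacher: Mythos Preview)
Your argument is correct and rests on the same key input as the paper (that $F_2$ depends only on $s$, via Lemma~\ref{lemma:mapstoflat}), but the two proofs diverge from there. The paper argues \emph{forward} and differentially: at a CR singularity the tangent space of $\partial\Omega$ is the horizontal plane orthogonal to $\partial/\partial s$, and since $F_2$ depends only on $s$ the derivative $DF$ pushes horizontal vectors to horizontal vectors; hence the image tangent space is again horizontal, so the image point is CR singular. You argue \emph{backward} and set-theoretically: since $F$ collapses each height-$c$ slice of $\partial\Omega$ into a single height-$F_2(c)$ slice, the preimage of a pole is an entire boundary slice mapping to one point, and injectivity on $\partial\Omega$ forces that slice to be a singleton, i.e., a pole. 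Your route is a bit more elementary (no tangent spaces or Jacobians at the singularities) and exploits the global picture from Lemma~\ref{lemma:topologyOmega} more directly; the paper's route is local at each singularity and closer in spirit to the differential-geometric characterization of CR singular points. One small remark: the fact that $\partial\Omega\cap\{s=s_{\max}\}$ is a \emph{single} point is not stated verbatim in Lemma~\ref{lemma:topologyOmega}, but it follows since every boundary point at an extremal height is tangent to a leaf and hence CR singular, while the lemma guarantees exactly one CR singularity at each extreme.
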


\begin{proof}
By Lemma~\ref{lemma:topologyOmega}
we find that the $s$ that correspond to CR singularities of $\partial
\Omega$
are the extrema.  Without loss of generality (moving $s$ by an affine transform)
suppose that $\Omega_{(s)}$
is nonempty for $s \in (-1,1)$ and $s = \pm 1$ corresponds to the two
CR singularities of $\partial \Omega$.  Also, without loss of generality
(moving $z$ by an affine transform) we assume that
the CR singularities are at $(0,\pm 1)$.
Because (as above) $F_2$ depends on $s$ only,  at the CR singularities
the derivative of $F$ in the $(z,s)$ variables (really the derivative of the
complexification of $F$) is
\begin{equation}
\begin{bmatrix}
\frac{\partial F_1}{\partial z} &
\frac{\partial F_1}{\partial s} \\
0 &
\frac{\partial F_2}{\partial s}
\end{bmatrix} .
\end{equation}
The tangent space of $\partial \Omega$ at the CR singularities $(0,\pm 1)$
is orthogonal to
$\frac{\partial}{\partial s}$.
Therefore, since
$F|_{\partial \Omega}$ is a diffeomorphism,
the pushforward of this tangent space is the tangent space of $F(\partial
\Omega)$ at $F(0,\pm 1)$. Hence this space must also
be orthogonal to
$\frac{\partial}{\partial s}$.  In other words,
$F(0,\pm 1)$ is also one of
the CR singular points $(0,\pm 1)$.
\end{proof}

 We note that the proof of the lemma also works for
$n \geq 2$, although when $n \geq 2$ we also have
Proposition~\ref{prop:imagenotnondegenerate}
and Corollary \ref{cor:imageofnondegeneratenotCR}.
Thus, in the
higher-dimensional
case the result is local, while a global argument must be made for
$n=1$.

The corollary now follows using arguments similar to those in the proof of Corollary~\ref{cor:auto}.

%%%%%%%%%%%%%%%%%%%%%%%%%%%%%%%%%%%%%%%%%%%%%%%%%%%%%%%%%%%%%%%%%%%%%%%%%%%%

\def\MR#1{\relax\ifhmode\unskip\spacefactor3000 \space\fi%
  \href{http://www.ams.org/mathscinet-getitem?mr=#1}{MR#1}}

\begin{bibdiv}
\begin{biblist}

%\bib{BER:book}{book}{
%   author={Baouendi, M. Salah},
%   author={Ebenfelt, Peter},
%   author={Rothschild, Linda Preiss},
%   title={Real submanifolds in complex space and their mappings},
%   series={Princeton Mathematical Series},
%   volume={47},
%   publisher={Princeton University Press, Princeton, NJ},
%   date={1999},
%   pages={xii+404},
%   isbn={0-691-00498-6},
%   %review={\MR{1668103 (2000b:32066)}},
%   review={\MR{1668103}},
%}

\bib{bedford:boundaries}{article}{
   author={Bedford, Eric},
   title={Levi flat hypersurfaces in ${\bf C}\sp{2}$ with prescribed
   boundary: stability},
   journal={Ann.\ Scuola Norm.\ Sup.\ Pisa Cl.\ Sci.\ (4)},
   volume={9},
   date={1982},
   number={4},
   pages={529--570},
   issn={0391-173X},
   review={\MR{0693779}},
}

\bib{BG:envhol}{article}{
      author={Bedford, Eric},
      author={Gaveau, Bernard},
       title={Envelopes of holomorphy of certain {$2$}-spheres in {${\bf
  C}\sp{2}$}},
        date={1983},
        ISSN={0002-9327},
     journal={Amer.\ J.\ Math.},
      volume={105},
      number={4},
       pages={975\ndash 1009},
      review={\MR{0708370}},
}

\bib{Bishop:diffman}{article}{
      author={Bishop, Errett},
       title={Differentiable manifolds in complex {E}uclidean space},
        date={1965},
        ISSN={0012-7094},
     journal={Duke Math.\ J.},
      volume={32},
       pages={1\ndash 21},
      review={\MR{0200476}},
}

%\bib{Chirka:book}{book}{
%   author={Chirka, E.\ M.},
%   title={Complex analytic sets},
%   series={Mathematics and its Applications (Soviet Series)},
%   volume={46},
%   note={Translated from the Russian by R.\ A.\ M.\ Hoksbergen},
%   publisher={Kluwer Academic Publishers Group, Dordrecht},
%   date={1989},
%   pages={xx+372},
%   isbn={0-7923-0234-6},
%   review={\MR{1111477}},
%   doi={10.1007/978-94-009-2366-9},
%}

\bib{DTZ}{article}{
   author={Dolbeault, Pierre},
   author={Tomassini, Giuseppe},
   author={Zaitsev, Dmitri},
   title={On boundaries of Levi-flat hypersurfaces in ${\mathbb C}^n$},
   language={English, with English and French summaries},
   journal={C.\ R.\ Math.\ Acad.\ Sci.\ Paris},
   volume={341},
   date={2005},
   number={6},
   pages={343--348},
   issn={1631-073X},
   %review={\MR{2169149 (2006e:32048)}},
   review={\MR{2169149}},
%   doi={10.1016/j.crma.2005.07.012},
}

\bib{DTZ2}{article}{
   author={Dolbeault, Pierre},
   author={Tomassini, Giuseppe},
   author={Zaitsev, Dmitri},
   title={Boundary problem for Levi flat graphs},
   journal={Indiana Univ.\ Math.\ J.},
   volume={60},
   date={2011},
   number={1},
   pages={161--170},
   issn={0022-2518},
   review={\MR{2952414}},
%   doi={10.1512/iumj.2011.60.4241},
}

\bib{FangHuang}{article}{
   author={Fang, Hanlong},
   author={Huang, Xiaojun},
   title={Flattening a non-degenerate CR singular point of real codimension
   two},
   journal={Geom.\ Funct.\ Anal.},
   volume={28},
   date={2018},
   number={2},
   pages={289--333},
   issn={1016-443X},
   review={\MR{3788205}},
   %doi={10.1007/s00039-018-0431-5},
}

\bib{HarveyLawson}{article}{
   author={Harvey, F.\ Reese},
   author={Lawson, H.\ Blaine, Jr.},
   title={On boundaries of complex analytic varieties.\ I},
   journal={Ann.\ of Math.\ (2)},
   volume={102},
   date={1975},
   number={2},
   pages={223--290},
   issn={0003-486X},
   review={\MR{0425173}},
%   doi={10.2307/1971032},
}

\bib{huangkrantz}{article}{
   author={Huang, Xiao Jun},
   author={Krantz, Steven G.},
   title={On a problem of Moser},
   journal={Duke Math.\ J.},
   volume={78},
   date={1995},
   number={1},
   pages={213--228},
   issn={0012-7094},
   review={\MR{1328757}},
}

\bib{HuangYin:flatI}{article}{
   author={Huang, Xiaojun},
   author={Yin, Wanke},
   title={Flattening of CR singular points and analyticity of the local hull
   of holomorphy I},
   journal={Math.\ Ann.},
   volume={365},
   date={2016},
   number={1-2},
   pages={381--399},
   issn={0025-5831},
   review={\MR{3498915}},
%   doi={10.1007/s00208-015-1228-6},
}

\bib{HurewiczWallman}{book}{
   author={Hurewicz, Witold},
   author={Wallman, Henry},
   title={Dimension Theory},
   series={Princeton Mathematical Series, v.\ 4},
   publisher={Princeton University Press, Princeton, N.\ J.},
   date={1941},
   pages={vii+165},
   review={\MR{0006493}},
}

\bib{crext1}{article}{
   author={Lebl, Ji\v r\'\i },
   author={Noell, Alan},
   author={Ravisankar, Sivaguru},
   title={Extension of CR functions from boundaries in ${\mathbb C}^n\times{\mathbb R}$},
   journal={Indiana Univ.\ Math.\ J.},
   volume={66},
   date={2017},
   number={3},
   pages={901--925},
   issn={0022-2518},
   review={\MR{3663330}},
}

\bib{crext2}{article}{
   author={Lebl, Ji\v r\'\i },
   author={Noell, Alan},
   author={Ravisankar, Sivaguru},
   title={Codimension two CR singular submanifolds and extensions of CR
   functions},
   journal={J.\ Geom.\ Anal.},
   volume={27},
   date={2017},
   number={3},
   pages={2453--2471},
   issn={1050-6926},
   review={\MR{3667437}},
}

\bib{crext3}{article}{
   author={Lebl, Ji\v r\'\i },
   author={Noell, Alan},
   author={Ravisankar, Sivaguru},
   title={On Lewy extension for smooth hypersurfaces in $\C^n\times\R$},
   journal={Trans.\ Amer.\ Math.\ Soc., to appear},
   note={\href{https://arxiv.org/abs/1704.08662}{arXiv:1704.08662}},
   date={},
}

\bib{MW:normal}{article}{
      author={Moser, J{\"u}rgen~K.},
      author={Webster, Sidney~M.},
       title={Normal forms for real surfaces in {${\bf C}\sp{2}$} near complex
  tangents and hyperbolic surface transformations},
        date={1983},
        ISSN={0001-5962},
     journal={Acta Math.},
      volume={150},
      number={3--4},
       pages={255--296},
      review={\MR{0709143}},
}

%\bib{Whitney:book}{book}{
%   author={Whitney, Hassler},
%   title={Complex analytic varieties},
%   publisher={Addison-Wesley Publishing Co., Reading, Mass.-London-Don
%   Mills, Ont.},
%   date={1972},
%   pages={xii+399},
%   review={\MR{0387634}},
%}

\end{biblist}
\end{bibdiv}

%%%%%%%%%%%%%%%%%%%%%%%%%%%%%%%%%%%%%%%%%%%%%%%%%%%%%%%%%%%%%%%%%%%%%%%%%%%%

\end{document}